\newtheorem{theorem}{Theorem}[section]
\newtheorem{lemma}[theorem]{Lemma}
\newtheorem{corollary}[theorem]{Corollary}
\newtheorem{proposition}[theorem]{Proposition}
\newtheorem{remark}[theorem]{Remark}
\theoremstyle{definition}
\newtheorem{definition}[theorem]{Definition}
\newtheorem{example}[theorem]{Example}
\newtheorem{remark/example}[theorem]{Remark/Example}
\newtheorem{algorithm}[theorem]{Algorithm} 
\numberwithin{equation}{section}
\newcommand{\Rees}{\operatorname{Rees}}
\newcommand{\ini}{\operatorname{in}}
\newcommand{\multiset}{\operatorname{multiset}}
\begin{document}

\title{The determinantal ideals of extended Hankel matrices}
\author{Le Dinh Nam\\
\footnotesize Dipartimento di Matematica\\
\footnotesize Universit\`a degli Studi di Genova, Italy\\
\footnotesize \url{ledinh@dima.unige.it}}
\maketitle
\vspace{15pt}
\begin{abstract}
\noindent
In this paper, we use the tools of Gr\"{o}bner bases and combinatorial secant varieties to study the determinantal ideals $I_t$ of the extended Hankel matrices. Denote by $c$-chain a sequence $a_1,\dots,a_k$ with $a_i+c<a_{i+1}$ for all $i=1,\dots,k-1$. Using the results of $c$-chain, we solve the membership problem for the symbolic powers $I_t^{(s)}$  and we compute the primary decomposition of the product $I_{t_1}\cdots I_{t_k}$ of the determinantal ideals. Passing through the initial ideals and algebras we prove that the product $I_{t_1}\cdots I_{t_k}$ has a linear resolution and the multi-homogeneous Rees algebra $\Rees(I_{t_1},\dots,I_{t_k})$ is defined by  a  Gr\"obner basis of quadrics.

\end{abstract}

\section{Introduction}

   The study of determinantal ideals, rings and varieties is a classical topic in commutative algebra, algebraic geometry and invariant  theory. The case of generic matrices is well understood, see  the book of Bruns and Vetter \cite{BV}, as well as that of  generic symmetric and generic skew-symmetric matrices, see the papers of J\'{o}zefiak \cite{J} and J\'{o}zefiak-Pragacz \cite{JP}. One of the possible ways to study these objects is via deformation to monomial ideals and this can be done by means of Gr\"obner bases. For the generic families, the Gr\"obner bases  have been described by  Sturmfels \cite{St1}, Herzog-Trung \cite{HT}, Conca \cite{C1}, Sturmfels-Sullivant \cite{SS} and Sullivant \cite{S}. The case of minors of Hankel matrices has been studied by Conca \cite{C}. In this paper, we deal with extended Hankel matrices. \\
Let $R=\mathbb{K}[x_1,x_2,\dots,x_n]$ where $\mathbb{K}$ is a field. Our goal is the study of the  ideal $I_t$ generated by the set of $t$-minors of  the matrix:   
$$
X_t= \left (
\begin{array}{ccccc}
x_1    & x_2    &  x_3   & \cdots & x_{n-(t-1)c} \\
x_{1+c}    & x_{2+c}    & \cdots & \cdots & \cdots    \\
x_{1+2c}    & \cdots & \cdots & \cdots & \cdots    \\ 
\vdots & \vdots & \vdots & \vdots & \vdots    \\ 
x_{1+(t-1)c}    & \cdots & \cdots & \cdots & x_n
\end{array}
\right ). 
$$

As we will explain, $I_2$  defines the (unique) balanced rational normal scroll of $\mathbb{P}^{n-1}$ of dimension $c$ and $I_t$ defines its $(t-1)$th secant variety.  Our goal is to study the symbolic powers and the primary decomposition of the powers of the ideals $I_t$ and the associated blow-up algebras. We will employ various techniques including those used in \cite{CHV,C,CH,SS,S,SU}.

We now describe our results in detail. We obtain the following descriptions of  the symbolic powers  and of the primary decomposition of the powers of $I_t$: 

$$I_t^{(s)}=\sum I_t^{a_t}I_{t+1}^{a_{t+1}}\cdots I_m^{a_m},$$
where $m=\lfloor \frac{n+c}{c+1} \rfloor$ and the sum  is extended  over all the sequences of non-negative integers
$a_t,$ $a_{t+1},$ $\dots,$ $a_m,$ with $a_t+2a_{t+1}+\cdots+(m-t+1)a_m=s$. The primary decomposition of $I_t^s$ is:
$$I_t^{s}=I_t^{(s)}\cap I_{t-1}^{(2s)} \cap \cdots \cap I_{1}^{(ts)}.$$ 
Both the description of the symbolic powers and of the primary decomposition are the expected ones in view of the results of De Concini, Eisenbud, Procesi \cite{DEP} and Sullivant \cite{S}.
 
Furthermore we also describe a primary decomposition of every product $I_{t_1}\cdots I_{t_s}$  and prove that $I_{t_1}\cdots I_{t_s}$ has a linear resolution. 
We show that the symbolic and the ordinary Rees algebras of $I_t$ are Cohen-Macaulay.  We also show that the Rees algebra of $I_t$ is   Koszul (indeed defined by a Gr\"obner basis of quadrics). Finally, we generalize these results, showing that the multi-Rees algebra  $\Rees(I_{t_1},\dots,I_{t_k})$ is Cohen-Macaulay and Koszul for every choice of the numbers $t_1,\dots,t_k$. 

Some results in this paper have been conjectured and confirmed by using the computer algebra package  CoCoA  \cite{CT}. This paper was made possible with the invaluable support from Prof. Aldo Conca. Many thanks also to Alexandru Constantinescu for his support.
 \section{Notation}
In this section, we recall some results of Simis-Ulrich \cite{SU} and Sturmfels-Sullivant \cite{SS,S} relating ideals defining secant varieties to their symbolic powers.\\
Let $I_1,\dots,I_r$ be ideals in a polynomial ring $R=\mathbb{K}[x]=\mathbb{K}[x_1,\dots,x_n]$ over a field $\mathbb{K}.$ The \textit{join} $I_1*\cdots*I_r$ is computed as follows. We use $rn$ new indeterminates, grouped into $r$ vectors $Y_j=(y_{j1},\dots,y_{jn}),$ $j=1,\dots,r$ and we consider the polynomial ring $\mathbb{K}[x,y]$  in all $rn+n$ indeterminates. Let $I_j(Y_j)$ be the image of the ideal $I_j$ in $\mathbb{K}[x,y]$ under the map $x\to y_j$. Then $I_1*I_2*\cdots*I_r$ is the elimination ideal
$$\big(I_1(y_1)+\cdots+I_r(y_r)+\big<y_{1i}+y_{2i}+\cdots+y_{ri}-x_i : i=1,\dots,n\big>\big)\cap \mathbb{K}[x].$$
We define the \textit{rth secant ideal} of an ideal $I\subset \mathbb{K}[x]$ to be the $r$-fold join of $I$ with itself: $$I^{\{r\}}:=I*I*\cdots*I.$$
If $\mathbb{K}=\bar{\mathbb{K}}$, $I$ and $J$ are homogeneous radical ideals defining varieties $V=V(I)$ and $W=V(J)$ then $I*J$ is the vanishing ideal of the embedded join $$V*W=\overline{\cup_{v\in V}\cup_{w\in W}\big<v,w\big>},$$
where $\big<v,w\big>$ is the line spanned by $v$ and $w$ and the closure operation is the Zariski closure. The join operation is commutative and associative. Moreover, it satisfies the distributive law with respect to intersection; see \cite[Lemma 2.1]{SS}.

Given a term order $\prec$  and an ideal $I$ of $\mathbb{K}[x]$ we denote by $\ini_{\prec}(I)$ the initial ideal of $I$ with respect to $\prec$. The authors proved in \cite[Theorem 2.3]{SU} that:
\begin{theorem} 
\label{thm1}
We have the following inclusion:
$$\ini_{\prec}(I_1*I_2*\cdots*I_r)\subseteq \ini_{\prec}(I_1)*\ini_{\prec}(I_2)*\cdots*\ini_{\prec}(I_r).$$
In particular, we have $$\ini_{\prec}(I^{\{r\}})\subseteq \big(\ini_{\prec}(I)\big)^{\{r\}}.$$ 
\end{theorem}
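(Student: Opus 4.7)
My approach is to replace the term order $\prec$ by a positive integer weight $w$ inducing the same initial ideals and to exploit that the substitution $\phi\colon\mathbb{K}[x]\to\mathbb{K}[y]$, $\phi(x_i)=y_{1i}+\cdots+y_{ri}$, which realises the join is weight-preserving for a natural extension of $w$. A weight $w\in\mathbb{Z}_{>0}^n$ with $\ini_w(I_j)=\ini_\prec(I_j)$ for every $j=1,\dots,r$ exists by choosing an interior integer point common to the finitely many full-dimensional Gröbner cones involved. I then extend $w$ to a weight $\tilde w$ on $\mathbb{K}[x,y]$ by setting $\tilde w(y_{ji})=\tilde w(x_i)=w(x_i)$. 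This choice makes each linear generator $y_{1i}+\cdots+y_{ri}-x_i$ of $L$ weight-homogeneous, and, crucially, every monomial appearing in the expansion $\phi(x^\alpha)=(y_1+\cdots+y_r)^\alpha$ carries the same $\tilde w$-weight $w(\alpha)$. Consequently
$$\ini_{\tilde w}(\phi(f))\;=\;\phi(\ini_w(f))\;=\;\phi(\ini_\prec(f))\qquad\text{for every }f\in\mathbb{K}[x].$$

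Eliminating $x$ against $L$ rewrites the join as $I_1*\cdots*I_r=\ker\!\bigl(\mathbb{K}[x]\xrightarrow{\phi}\mathbb{K}[y]\big/\textstyle\sum_j I_j(Y_j)\bigr)$, and the analogous formula holds for $\ini_\prec(I_1)*\cdots*\ini_\prec(I_r)$ with $\sum_j\ini_\prec(I_j)(Y_j)$ in place of $\sum_j I_j(Y_j)$. Hence the theorem reduces to establishing the identity
$$\ini_{\tilde w}\!\Bigl(\textstyle\sum_{j=1}^r I_j(Y_j)\Bigr)\;=\;\textstyle\sum_{j=1}^r\ini_\prec(I_j)(Y_j)$$
in $\mathbb{K}[y]$. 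The containment $\supseteq$ is automatic. For $\subseteq$, I would exploit the disjointness of the blocks $Y_1,\dots,Y_r$: under the tensor decomposition $\mathbb{K}[y]\cong\bigotimes_j\mathbb{K}[Y_j]$ the weight $\tilde w$ splits as a sum of block weights, the quotient $\mathbb{K}[y]/\sum_j I_j(Y_j)$ factors as $\bigotimes_j\mathbb{K}[Y_j]/I_j(Y_j)$, and its associated graded with respect to $\tilde w$ is $\bigotimes_j\mathbb{K}[Y_j]/\ini_\prec(I_j)(Y_j)=\mathbb{K}[y]/\sum_j\ini_\prec(I_j)(Y_j)$. Equivalently, by Buchberger's criterion the disjoint union $\bigcup_j G_j(Y_j)$, with $G_j$ a Gröbner basis of $I_j$ with respect to $\prec$, is a Gröbner basis of $\sum_j I_j(Y_j)$, because $S$-polynomials inside a single block reduce to zero and those across different blocks vanish trivially (their leading monomials use disjoint sets of variables).

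With the identity in hand, the conclusion is immediate: given $f\in I_1*\cdots*I_r$ one has $\phi(f)\in\sum_j I_j(Y_j)$, so
$$\phi(\ini_\prec(f))\;=\;\ini_{\tilde w}(\phi(f))\;\in\;\ini_{\tilde w}\!\Bigl(\textstyle\sum_j I_j(Y_j)\Bigr)\;=\;\textstyle\sum_j\ini_\prec(I_j)(Y_j),$$
which by the join characterisation says $\ini_\prec(f)\in\ini_\prec(I_1)*\cdots*\ini_\prec(I_r)$; the second statement is the specialisation $I_1=\cdots=I_r=I$. The main obstacle is conceptual rather than technical: one has to spot that assigning $\tilde w(y_{ji})=w(x_i)$ \emph{independently of $j$} simultaneously homogenises the linear relations in $L$ and turns $\phi$ into a weight-preserving map. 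Once that observation is made, the remaining work reduces to the standard fact that initial ideals split well across ideals generated in disjoint variable blocks.
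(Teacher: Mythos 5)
The paper itself offers no proof of this statement---it is quoted verbatim from \cite[Theorem 2.3]{SU}---so your argument can only be judged on its own terms; its skeleton (realize the join as the kernel of $\phi\colon\mathbb{K}[x]\to\mathbb{K}[y]/\sum_j I_j(Y_j)$, $x_i\mapsto y_{1i}+\cdots+y_{ri}$, and degenerate via the weight $\tilde w$ with $\tilde w(y_{ji})=w(x_i)$, which makes $\phi$ weight-preserving) is sound and is essentially the standard route. However, there is a genuine gap in the middle step: the asserted equality $\ini_{\tilde w}(\phi(f))=\phi(\ini_{\prec}(f))$ for \emph{every} $f\in\mathbb{K}[x]$ is false. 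You chose $w$ only so that $\ini_w(I_j)=\ini_{\prec}(I_j)$ for $j=1,\dots,r$; for an arbitrary polynomial $f$---in particular for the elements of $I_1*\cdots*I_r$, which are exactly the ones you apply the identity to---the $w$-initial form $\ini_w(f)$ may consist of several terms and need not contain, let alone equal, $\ini_{\prec}(f)$. What your final display actually proves is $\phi(\ini_w(f))\in\sum_j\ini_{\prec}(I_j)(Y_j)$, hence $\ini_w(I_1*\cdots*I_r)\subseteq\ini_{\prec}(I_1)*\cdots*\ini_{\prec}(I_r)$; that is the theorem for the order ``weight $w$ first'', not for $\prec$, and nothing in your setup lets you replace $\ini_w$ by $\ini_{\prec}$ on the left-hand side.

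The repair is short but must be made explicitly: when selecting the representing weight, include the join ideal $J=I_1*\cdots*I_r$ itself among the finitely many ideals, so that in addition $\ini_w(J)=\ini_{\prec}(J)$ (a common representing weight for finitely many ideals exists by the same Gr\"obner-cone argument you invoke). Then argue at the level of ideals rather than of single leading terms: for $f\in J$ one gets $\phi(\ini_w(f))=\ini_{\tilde w}(\phi(f))\in\sum_j\ini_{\prec}(I_j)(Y_j)$, so $\ini_w(f)$ lies in $\ini_{\prec}(I_1)*\cdots*\ini_{\prec}(I_r)$ (it need not be a monomial, and that is harmless); since these initial forms generate $\ini_w(J)=\ini_{\prec}(J)$, the claimed inclusion follows. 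With that modification your proof is correct. Two smaller points to tighten: S-polynomials of generators with coprime leading monomials do not ``vanish trivially'' but reduce to zero (Buchberger's first criterion); and the identity $\ini_{\tilde w}(\sum_j I_j(Y_j))=\sum_j\ini_{\prec}(I_j)(Y_j)$ silently uses that the $\tilde w$-initial form of each element of the reduced $\prec$-Gr\"obner basis of $I_j$ is its $\prec$-leading monomial---true because $\ini_w(I_j)=\ini_{\prec}(I_j)$ is a monomial ideal, but worth a sentence.
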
 

\begin{definition} The term order $\prec$ is called \textit{delightful} for the ideal $I$ if $$\ini_{\prec}(I^{\{r\}})=\big(\ini_{\prec}(I)\big)^{\{r\}}$$ for all $r\geq1.$
\end{definition}
Let $G$ be an undirected graph with vertex set $[n]=\{1,2,\dots,n\}$. To $G$ we associate the \textit{edge ideal} $I(G)$ which is generated by the square-free quadratic monomials $x_ix_j$ corresponding to the edges $\{i,j\}$ of $G$.

The \textit{chromatic number} $\chi(G)$ of graph $G$ is the smallest number of colors which can be used to give a coloring of the vertices of $G$ such that no two adjacent vertices have the same color. The \textit{clique number} is the size of the largest complete subgraph. To the subset $V\subset [n]$ we associate the monomial $m_V=\prod_{i\in V}x_i$ and $G_V$ is the subgraph of $G$ associated with $V$. A graph $G$ is called \textit{perfect} if the chromatic number $\chi(G_V)$ equals the clique number $\omega(G_V)$ for every subset $V\subset [n]$.  In \cite[Theorem 3.2]{SS} and \cite[Proposition 3.4]{SS}, we have two following results:
\begin{theorem}
\label{thm2} 
The rth secant of an edge ideal $I(G)$ is generated by the square-free monomials $m_V$ whose subgraph $G_V$ is not r-colorable, that is:
$$I(G)^{\{r\}}=\big<m_V | \chi(G_V)>r\big>.$$
The minimal generators of $I(G)^{\{r\}}$ are those monomials $m_V$ such that $G_V$ is not r-colorable but $G_U$ is r-colorable for every proper subset $U\subset V.$
\end{theorem}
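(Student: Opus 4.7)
The plan is to exploit the elimination definition of the join together with the fact that $I(G)$ is itself monomial, so that $I(G)^{\{r\}}$ will be monomial and its squarefree generators can be read off by expanding a product. I would carry out three steps: prove monomiality via Theorem \ref{thm1}, obtain the combinatorial membership criterion by a substitution/expansion, and deduce the minimal-generator statement.

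\emph{Step 1 (monomiality).} Pick any graded term order $\prec$. Since $I(G)$ is monomial, $\ini_\prec(I(G))=I(G)$, and Theorem \ref{thm1} gives
$$\ini_\prec(I(G)^{\{r\}})\subseteq(\ini_\prec(I(G)))^{\{r\}}=I(G)^{\{r\}}.$$
As an ideal and its initial ideal share Hilbert function, this inclusion must be an equality; hence $I(G)^{\{r\}}$ coincides with its own initial ideal and is monomial. In particular it suffices to decide which monomials belong to it.

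\emph{Step 2 (membership criterion).} Working modulo $\langle y_{1i}+\cdots+y_{ri}-x_i\rangle$ we identify $x_i=\sum_{j=1}^{r}y_{ji}$, so
$$m_V=\prod_{i\in V}\Bigl(\sum_{j=1}^{r}y_{ji}\Bigr)=\sum_{\phi:V\to[r]}\prod_{i\in V}y_{\phi(i),i}.$$
Every coefficient on the right equals $1$, and $I(G)(Y_1)+\cdots+I(G)(Y_r)$ is a monomial ideal, so $m_V\in I(G)^{\{r\}}$ iff each summand on the right lies in that ideal. The summand indexed by $\phi$ is in it exactly when some fibre $\phi^{-1}(k)$ contains an edge of $G$; as $\phi$ ranges over all maps $V\to[r]$, this says precisely that no partition of $V$ into $r$ parts yields only independent sets in $G_V$, i.e.\ $\chi(G_V)>r$. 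Applying the same substitution to a general monomial $m=\prod_{i\in V}x_i^{a_i}$ and examining the ``pure'' summand $\prod_{i\in V}y_{\phi(i),i}^{a_i}$ for each $\phi$ gives the identical colouring condition on $V=\Support(m)$; thus every monomial in $I(G)^{\{r\}}$ is a multiple of some $m_V$ with $\chi(G_V)>r$, and we conclude $I(G)^{\{r\}}=\langle m_V:\chi(G_V)>r\rangle$.

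\emph{Step 3 (minimal generators).} The minimal monomial generators of a monomial ideal are those with inclusion-minimal support, so combined with Step 2 the minimal generators of $I(G)^{\{r\}}$ are exactly the $m_V$ for which $G_V$ is not $r$-colorable while $G_U$ is $r$-colorable for every $U\subsetneq V$, as claimed. The crucial ingredient is Step 1: without monomiality one would be forced to track arbitrary elements of the elimination ideal inside the larger ring $\mathbb{K}[x,y]$, whereas Theorem \ref{thm1} reduces the question to a single Hilbert-function comparison.
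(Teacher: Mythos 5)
Your proposal is correct, but there is nothing in the paper to compare it against: the paper does not prove this statement, it simply quotes it from Sturmfels--Sullivant \cite[Theorem 3.2]{SS}. Your argument in fact reconstructs the standard proof from that source: Step 1 (monomiality of $I(G)^{\{r\}}$ via Theorem \ref{thm1} plus the Hilbert-function comparison between a homogeneous ideal and its initial ideal) is valid, and Step 2's substitution $x_i\mapsto\sum_j y_{ji}$ correctly characterizes membership in the join, since modulo the linear forms the elimination ideal is identified with $I(G)(Y_1)+\cdots+I(G)(Y_r)$, a monomial ideal, so a polynomial lies in it iff each of its terms does; your use of the ``pure'' summands $\prod_i y_{\phi(i),i}^{a_i}$, which always carry coefficient $1$, neatly avoids any characteristic-$p$ trouble with the multinomial coefficients of the mixed terms. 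The only point to tighten is the opening claim of Step 3: ``the minimal monomial generators of a monomial ideal are those with inclusion-minimal support'' is false for general monomial ideals (e.g.\ $(x^2,xy)$ has both elements as minimal generators although the supports are nested). What you actually need, and what holds here, is that by Steps 1--2 the ideal is generated by the squarefree monomials $m_V$ with $\chi(G_V)>r$, and among squarefree monomials divisibility is exactly inclusion of supports; the stated description of the minimal generators then follows. With that rephrasing the proof is complete and is essentially the argument of \cite{SS}.
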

\begin{proposition} 
\label{prop3}
A graph $G$ is perfect if and only if the ideal $I(G)^{\{r\}}$ is generated in degree $r+1$ for every $r\in \mathbb{N}$ such that $I(G)^{\{r\}}\not=0.$

\end{proposition}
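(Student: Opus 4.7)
The plan is to combine Theorem \ref{thm2} with a careful analysis of the minimal generators of $I(G)^{\{r\}}$. By Theorem \ref{thm2}, the minimal generators of $I(G)^{\{r\}}$ are precisely the monomials $m_V$ such that $G_V$ is not $r$-colorable but $G_U$ is $r$-colorable for every proper subset $U\subsetneq V$. The degree of such a generator is $|V|$. So the condition "generated in degree $r+1$" translates to: every vertex-critical non-$r$-colorable induced subgraph has exactly $r+1$ vertices. The key observation is that a graph on $r+1$ vertices with chromatic number $>r$ must satisfy $\chi(G_V)=r+1=|V|$, which forces $G_V=K_{r+1}$.

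For the forward implication, I would assume $G$ is perfect and let $m_V$ be a minimal generator of $I(G)^{\{r\}}$. Since $\chi(G_V)>r$, perfection gives $\omega(G_V)>r$, so $G_V$ contains $K_{r+1}$ as an induced subgraph on some subset $W\subseteq V$ with $|W|=r+1$. Since $G_W$ is already non-$r$-colorable, the minimality of $V$ forces $W=V$, hence $|V|=r+1$.

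For the converse, I would argue by contrapositive: assume $G$ is not perfect and exhibit a minimal generator of degree $>r+1$ for a suitable $r$. Choose a subset $V$ minimal with respect to the property $\chi(G_V)>\omega(G_V)$, and set $r:=\omega(G_V)$. Then $G_V$ is not $r$-colorable, while the minimality of $V$ together with $\omega(G_U)\le \omega(G_V)=r$ for $U\subsetneq V$ forces $\chi(G_U)=\omega(G_U)\le r$, so $G_U$ is $r$-colorable. Thus $m_V$ is a minimal generator of $I(G)^{\{r\}}$. If $|V|=r+1$ we would have $G_V=K_{r+1}$ and $\omega(G_V)=r+1=\chi(G_V)$, contradicting non-perfection at $V$; hence $|V|>r+1$ and the ideal is not generated in degree $r+1$.

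The main subtlety is the converse direction, specifically choosing $r$ correctly and checking that $m_V$ is indeed a minimal generator (not just a generator). This requires the minimality of $V$ among imperfection witnesses, which guarantees that every proper induced subgraph is perfect, and therefore $r$-colorable. Once this is set up, the contradiction with the $K_{r+1}$-characterization of critical graphs on $r+1$ vertices closes the argument cleanly.
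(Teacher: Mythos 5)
Your argument is correct. Note that the paper itself gives no proof of this statement: it is quoted verbatim from Sturmfels--Sullivant \cite[Proposition 3.4]{SS}, so there is no internal proof to compare against; your derivation from Theorem \ref{thm2} (minimal generators $m_V$ correspond to vertex-critical non-$r$-colorable induced subgraphs), together with the standard fact that a graph on $r+1$ vertices with chromatic number $r+1$ must be $K_{r+1}$, and the choice in the converse of an inclusion-minimal imperfection witness $V$ with $r=\omega(G_V)$, is essentially the standard argument used in the cited source, and both directions are complete as written.
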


Let $I$ be a radical ideal in a polynomial ring $R$ over an algebraically closed field. We define the \textit{sth symbolic power} of $I$ to be  $$I^{(s)}=\bigcap_{p\in V(I)}m^s_p.$$
When $I$ is a prime ideal we known that $I^{(s)}$ is also the  $I$-primary component of $I^s$. In other words,  
$$I^{(s)}=R\cap I^sR_I=\{x\in R: \mbox{ there exists } f\in
R\setminus I  \mbox{ such that } fx\in I^s\}.$$ 
In \cite[Proposition 2.8]{S}, the author gives a formula to compute the symbolic power by join operation, namely:
$$I^{(r)}=I*m^r,$$
where $m=(x_1,\dots,x_n)$ is the maximal homogeneous ideal of $R$.\\
In characteristic zero, the symbolic power can also be computed by taking derivatives:
$$I^{(s)}=\Big<f \ \ |\frac{\partial^{|a|}f}{\partial x^a}\in I \textrm{ for all } a\in \mathbb{N}^n \textrm{ with } |a|=\sum_{i=1}^na_i\leq s-1\Big>.$$
Thus, the symbolic power $I^{(s)}$ contains all polynomials that vanish to order $s$ on the affine variety $V(I)$, and hence contains important geometric information about the variety.\\
Let I be a homogeneous radical ideal such that it does not containing linear forms. In \cite[lemma 2.5]{S}, one has $I^{(r)}\subseteq m^{r+1}.$ This implies that $$I^{\{r+s-1\}}=I^{\{r\}}*I^{\{s-1\}}\subseteq I^{\{r\}}*m^s=(I^{\{r\}})^{(s)}.$$
Moreover, the symbolic powers form a filtration $ (I^{\{r\}})^{(i)}(I^{\{r\}})^{(s-i)}\subseteq (I^{\{r\}})^{(s)}$ for all $i=1,\dots,s$. Hence, 
 $$\big(I^{\{r\}}\big)^{(s)}\subseteq I^{\{r+s-1\}}+\sum_{i=1}^{s-1}\big(I^{\{r\}}\big)^{(i)}\big(I^{\{r\}}\big)^{(s-i)}.$$
For many interesting families of ideals, the containment is an equality. This suggests the following definition.
\begin{definition} An ideal $I$ is \textit{differentially perfect} if for all $s$ and $r$, one has $$\big(I^{\{r\}}\big)^{(s)}=I^{\{r+s-1\}}+\sum_{i=1}^{s-1}\big(I^{\{r\}}\big)^{(i)}\big(I^{\{r\}}\big)^{(s-i)}.$$
\end{definition}
Note that an equivalent definition of \textit{r}-differentially perfect is that the symbolic powers of the secant ideal $I^{\{r\}}$ satisfy:
$$\big(I^{\{r\}}\big)^{(s)}=\sum_{\lambda \vdash s}\prod_j I^{\{r+\lambda_j-1\}},$$
where the sum runs over all partitions $\lambda=(\lambda_1,\lambda_2,\dots)$ of $s$, with $\lambda_i>0$. So if the ideal is differentially perfect then we can compute its symbolic powers in terms of secant ideals. One has \cite[Theorem 5.3]{S}: 
\begin{theorem} 
\label{thm5}
Let $I$ be an ideal and $\prec$ be a term order. Assume that $\prec$ is delightful for $I$ and $\ini_{\prec}(I)$ is radical and differentially perfect. Then $I$ is differentially perfect. In particular, let $\mathcal{G}_r=\{g_{r,1},g_{r,2},\dots\}$ be a $Gr\ddot{o}bner$ basis of $I^{\{r\}}$ with respect to $\prec$. Then the set of polynomials $$\mathcal{G}_{r,s}=\Big\{\prod_{i=1}^lg_{r_i,j_i}\ | r_i\geq r , \sum_{i=1}^l(r-r_i+1)=s\Big\}$$ is a $Gr\ddot{o}bner$ basis of $\big(I^{\{r\}}\big)^{(s)}$ with respect to $\prec$.
\end{theorem}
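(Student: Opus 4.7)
The plan is to prove the ``In particular'' Gröbner basis description by induction on $s$ and then deduce differential perfection of $I$ as a corollary. The base case $s=1$ is tautological: $(I^{\{r\}})^{(1)}=I^{\{r\}}$ and $\mathcal{G}_{r,1}=\mathcal{G}_{r}$, which is a Gröbner basis of $I^{\{r\}}$ by hypothesis. For the inductive step, I first check membership: any $f=\prod_{i=1}^{l}g_{r_i,j_i}\in\mathcal{G}_{r,s}$ lies in $(I^{\{r\}})^{(s)}$, because each factor satisfies $g_{r_i,j_i}\in I^{\{r_i\}}\subseteq (I^{\{r\}})^{(r_i-r+1)}$ by the filtration inclusion $I^{\{r+k-1\}}\subseteq(I^{\{r\}})^{(k)}$ recalled just before the definition of differentially perfect, and hence the product lies in $(I^{\{r\}})^{(s)}$ because $\sum(r_i-r+1)=s$. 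On the initial-ideal side, multiplicativity gives $\ini_{\prec}(f)=\prod\ini_{\prec}(g_{r_i,j_i})$; by delightfulness each $\mathcal{G}_{r_i}$ is also a Gröbner basis of $(\ini_{\prec}I)^{\{r_i\}}$, so by the assumed differential perfection of $\ini_{\prec}I$ the collection $\ini_{\prec}(\mathcal{G}_{r,s})$ generates $((\ini_{\prec}I)^{\{r\}})^{(s)}=(\ini_{\prec}I^{\{r\}})^{(s)}$. Together these give the sandwich
$$
(\ini_{\prec}I^{\{r\}})^{(s)}=(\ini_{\prec}\mathcal{G}_{r,s})\subseteq\ini_{\prec}\bigl((\mathcal{G}_{r,s})\bigr)\subseteq\ini_{\prec}\bigl((I^{\{r\}})^{(s)}\bigr).
$$

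The main obstacle is to close the sandwich with the reverse inclusion $\ini_{\prec}((I^{\{r\}})^{(s)})\subseteq(\ini_{\prec}I^{\{r\}})^{(s)}$; this is where delightfulness and radical+differentially perfect of $\ini_{\prec}I$ interact nontrivially. Radicality of $\ini_{\prec}I^{\{r\}}=(\ini_{\prec}I)^{\{r\}}$ propagates from $\ini_{\prec}I$ through its differentially perfect secant structure (in the motivating square-free monomial case of Theorem~\ref{thm2} this is transparent, the secant ideals remaining monomial and square-free). Using that $I^{\{r\}}$ is radical so that $(I^{\{r\}})^{(s)}=\{f:\exists g\text{ outside every minimal prime of }I^{\{r\}}\text{ with }gf\in(I^{\{r\}})^{s}\}$, I would lift a witness pair $(f,g)$ to the initial side: radicality of $\ini_{\prec}I^{\{r\}}$ forces its associated primes to coincide with its minimal primes, and by adjusting $g$ within its class modulo $I^{\{r\}}$ one arranges that $\ini_{\prec}(g)$ avoids every such prime, so that $\ini_{\prec}(g)\ini_{\prec}(f)=\ini_{\prec}(gf)\in\ini_{\prec}(I^{\{r\}})^{s}\subseteq(\ini_{\prec}I^{\{r\}})^{s}$ yields $\ini_{\prec}(f)\in(\ini_{\prec}I^{\{r\}})^{(s)}$.

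Once this inclusion is secured, the sandwich collapses and $\mathcal{G}_{r,s}$ is a Gröbner basis of $(I^{\{r\}})^{(s)}$. Differential perfection of $I$ then drops out for free: by construction every element of $\mathcal{G}_{r,s}$ is a product of generators whose indices satisfy $\sum(r_i-r+1)=s$, so
$$
(\mathcal{G}_{r,s})\subseteq I^{\{r+s-1\}}+\sum_{i=1}^{s-1}(I^{\{r\}})^{(i)}(I^{\{r\}})^{(s-i)}\subseteq(I^{\{r\}})^{(s)},
$$
where the $l=1$ summands (forcing $r_1=r+s-1$) account for $I^{\{r+s-1\}}$ and the $l\geq 2$ factorizations split, by grouping factors, as products of smaller symbolic powers already within the sum. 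The Gröbner basis equality $(\mathcal{G}_{r,s})=(I^{\{r\}})^{(s)}$ then forces both containments to be equalities, which is precisely the differential-perfection identity for $I$.
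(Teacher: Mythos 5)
First, a remark on provenance: the paper does not prove this statement at all; it is quoted verbatim from Sullivant \cite[Theorem 5.3]{S}, so the comparison has to be with that proof. Your overall sandwich strategy (products lie in the symbolic power via the filtration $I^{\{r+k-1\}}\subseteq(I^{\{r\}})^{(k)}$; their initial terms generate $\big((\ini_{\prec}I)^{\{r\}}\big)^{(s)}$ by delightfulness plus differential perfection of $\ini_{\prec}I$; then close with $\ini_{\prec}\big((I^{\{r\}})^{(s)}\big)\subseteq\big((\ini_{\prec}I)^{\{r\}}\big)^{(s)}$) is exactly the right skeleton, and your first and last paragraphs are fine.

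The genuine gap is in your middle paragraph, i.e.\ precisely at the crucial reverse inclusion. Your witness-lifting argument does not work for two reasons. (1) There is no justification for the claim that one can ``adjust $g$ within its class modulo $I^{\{r\}}$'' so that $\ini_{\prec}(g)$ avoids every minimal prime of $\ini_{\prec}(I^{\{r\}})$: the admissible witnesses form the colon ideal $\big((I^{\{r\}})^{s}:f\big)$, which is only known not to be contained in the minimal primes of $I^{\{r\}}$, whereas the monomial ideal $(\ini_{\prec}I)^{\{r\}}$ typically has many more minimal primes (in this paper they are the primes $P_z$, $z\in A_r$), and nothing forces the initial form of some element of the colon ideal to avoid all of them. (2) Even granting such a $g$, your step $\ini_{\prec}(gf)\in\ini_{\prec}\big((I^{\{r\}})^{s}\big)\subseteq\big(\ini_{\prec}I^{\{r\}}\big)^{s}$ uses a containment that goes the wrong way: in general only $\big(\ini_{\prec}J\big)^{s}\subseteq\ini_{\prec}(J^{s})$ holds, and the equality $\ini_{\prec}(I_t^{k})=\ini_{\prec}(I_t)^{k}$ is exactly the kind of statement (Theorem \ref{irredprimdec}(b)) that this paper \emph{derives} as a consequence of Theorem \ref{thm5}, so invoking it here is circular. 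The correct and much shorter route, which is Sullivant's, uses tools already recorded in Section 2: by the join description of symbolic powers, $(I^{\{r\}})^{(s)}=I^{\{r\}}*m^{s}$, so Theorem \ref{thm1} (Simis--Ulrich) gives
\begin{equation*}
\ini_{\prec}\big((I^{\{r\}})^{(s)}\big)=\ini_{\prec}\big(I^{\{r\}}*m^{s}\big)\subseteq\ini_{\prec}\big(I^{\{r\}}\big)*\ini_{\prec}(m^{s})=(\ini_{\prec}I)^{\{r\}}*m^{s}=\big((\ini_{\prec}I)^{\{r\}}\big)^{(s)},
\end{equation*}
using delightfulness and the fact that $m^{s}$ is already a monomial ideal; no radicality-transfer or witness argument is needed. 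Replacing your second paragraph by this computation makes the rest of your argument go through.
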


\section{The determinantal ideal of a extended Hankel matrix}
Denote by $<$ the degree lexicographic monomial order on $\mathbb{K}[x]$ induced by the order of the indeterminates $x_1>x_2>\dots >x_n$. In this section, we only use this term order. Fix an integer $c\geq 1$. Denote by $X$ the arrangement of indeterminates  

$$
X=
\begin{array}{cccccccccc}
x_1    & x_2    &  x_3   & \cdots & \cdots & \cdots & \cdots & x_{n-c}  & \cdots & x_n   \\
x_{1+c}    & x_{2+c}    & \cdots & \cdots & \cdots & \cdots & \cdots & x_n     \\  
x_{1+2c}    & \cdots & \cdots & \cdots & \cdots & x_n    \\
\vdots & \vdots & \vdots & \cdots \\
\vdots & \vdots & \vdots \\
x_{1+kc} & \cdots &x_n   
\end{array}
$$
where $k=\lfloor \frac{n-1}{c} \rfloor$. For all $j=1,\dots,k$ we denote  by $X_j$ the submatrix of $X$: 
$$
X_j=
\left (
\begin{array}{ccccc}
x_1    & x_2    &  x_3   & \cdots & x_{n-(j-1)c} \\
x_{1+c}    & x_{2+c}    & \cdots & \cdots & \cdots    \\
x_{1+2c}    & \cdots & \cdots & \cdots & \cdots    \\ 
\vdots & \vdots & \vdots & \vdots & \vdots    \\ 
x_{1+(j-1)c}    & \cdots & \cdots & \cdots & x_n
\end{array}
\right ).
$$

In particular, we have:
$$
X_2=
\left (
\begin{array}{ccccc}
x_1    & x_2    &  x_3   & \cdots & x_{n-c} \\ 
x_{1+c}    & \cdots & \cdots & \cdots & x_n
\end{array}
\right )
.$$\\
Given a matrix $A$ with entries in a ring we denote by $I_t(A)$ the ideal generated by all $t$-minors of the matrix $A$. It is well known that $I_2(X_2)$ is the defining ideal of the balanced rational normal scroll of dimension $c$ in $\mathbb{P}^{n-1}$, see \cite{CHV,H,E2}. For instance, let $n=7$ and $c=2$, permuting the columns of $X_2$ we may write it as
$$
\left (
\begin{array}{cccccc}
x_1    & x_3    &  x_5&  \big|  & x_2 & x_4 \\ 
x_3    & x_5    &  x_7 & \big| & x_4 & x_6
\end{array}
\right )
$$
and hence $I_2(X_2)$ defines the balanced scroll of dimension 2 in $\mathbb{P}^6,$ which is $S_{2,3}$ in the notation of \cite[page 93]{H}.

The highest order of a minor in $X$ is $\lfloor \frac{n+c}{c+1} \rfloor.$ Thus we consider only $t$-minors with $1\leq t\leq \lfloor \frac{n+c}{c+1} \rfloor$.  We set $m=\lfloor \frac{n+c}{c+1} \rfloor $ and denote by $I_t$ the determinantal ideal $I_t(X_t)$ for all $t=1,\dots,m.$

Given positive integers $a_1,a_2,\dots,a_s,b_1,b_2,\dots,b_s,$ with $a_i+b_j-1+(i-1)c\leq n$ for all $1\leq i,j\leq s,$ we denote by $[a_1,a_2,\dots,a_s|b_1,b_2,\dots,b_s]$ the minor of $X$ with row indices $a_1,a_2,\dots,a_s$ and column indices $b_1,b_2,\dots,b_s$. A minor of the form  $[1,2,\dots,s|b_1,b_2,\dots,b_s]$ will be called {\em maximal minor} or maximal $s$-minor. Note that each maximal minor is uniquely determined by the entries on the main diagonal. We denote by $M(a_1,\dots,a_s)$ the maximal $s$-minor, whose entries on the main diagonal are $x_{a_1},\dots,x_{a_s}$. It is easy to see that
$$M(a_1,a_2,\dots,a_s)=[1,2,\dots,s|a_1,a_2-c-1,\dots,a_s-(s-1)(c+1)].$$

For $t=1,\dots,\min(j+1,n-jc)$ let $I_t(X_j)$ be the ideal of $\mathbb{K}[x]$ generated by the $t$-minors of $X_j.$

Note first  that one has the following elementary relations:  
$$[a_1+1,\dots,a_t+1|b_1,\dots,b_t]=[a_1,\dots,a_t|b_1+c,\dots,b_t+c].$$ 
If $H\subseteq \{1,\dots,t\}$ we set  $e(H) =(e_1,\dots,e_t)$  where $e_i=1$ if $i\in H$ and $e_i=0$ if $i\not\in H$.
One has:
\begin{lemma}
\label{lm3} Let 
$\alpha=\alpha_1,\dots,\alpha_t$ and $\beta=\beta_1,\dots,\beta_t$ be sequences
of positive integers. Then for all $k=1,\dots,t$ one has  
\[
\sum_{H\subset \{1,\dots,t\},\ |H|=k} [\alpha+e(H)|\beta]=
\sum_{G\subset \{1,\dots,t\},\ |G|=k}[\alpha|\beta+c.e(G)]
\]
\noindent 
\end{lemma}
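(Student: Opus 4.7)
The key observation is that the stated identity follows from a purely combinatorial fact about the Leibniz expansion of the determinant, and has nothing specifically to do with the extended Hankel structure beyond the elementary relation recalled immediately before the lemma. Let $A$ be the $t\times t$ matrix with $A_{ij}=x_{\beta_j+(\alpha_i-1)c}$, so that $[\alpha|\beta]=\det A$, and let $B$ be the matrix with $B_{ij}=x_{\beta_j+\alpha_i c}$. The elementary relation says exactly that $B$ is at the same time the matrix underlying $[\alpha+e|\beta]$ and the matrix underlying $[\alpha|\beta+c\cdot e]$; that is, one can produce $B$ from $A$ either by shifting all rows up by one or by shifting all columns right by $c$.

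With this setup, for any subset $H\subseteq\{1,\dots,t\}$ the minor $[\alpha+e(H)|\beta]$ is the determinant of the matrix obtained from $A$ by replacing row $i$ with the corresponding row of $B$ for each $i\in H$; analogously, $[\alpha|\beta+c\cdot e(G)]$ is the determinant of the matrix obtained from $A$ by replacing column $j$ with the corresponding column of $B$ for each $j\in G$. The lemma therefore reduces to the following general fact about an arbitrary pair of $t\times t$ matrices $A,B$: the sum over all $\binom{t}{k}$ matrices obtained by replacing some $k$ rows of $A$ by the corresponding rows of $B$ has the same total determinant as the analogous sum over column replacements.

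To verify this general identity I expand both sides by the Leibniz formula, obtaining for the left-hand side
\[
\sum_{|H|=k}\sum_{\sigma\in S_t}\mathrm{sgn}(\sigma)\prod_{i\in H}B_{i,\sigma(i)}\prod_{i\notin H}A_{i,\sigma(i)},
\]
and for the right-hand side
\[
\sum_{|G|=k}\sum_{\sigma\in S_t}\mathrm{sgn}(\sigma)\prod_{i:\,\sigma(i)\in G}B_{i,\sigma(i)}\prod_{i:\,\sigma(i)\notin G}A_{i,\sigma(i)}.
\]
Swapping the order of summation in the second expression and, for fixed $\sigma$, making the bijective substitution $H=\sigma^{-1}(G)$ on $k$-subsets yields exactly the first expression, term by term.

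There is no genuine obstacle. The only step that takes thought is the initial reduction, namely recognizing that both sides of the lemma admit the uniform description ``determinants of matrices obtained from $A$ by swapping $k$ rows (respectively columns) for those of the common shift $B$''. This uniform description hinges on the fact that in an extended Hankel matrix the entry in position $(i,j)$ depends only on the sum $\beta_j+(\alpha_i-1)c$, so that a unit shift applied to a single row can equally well be realized as a shift of $c$ applied to every column entry of that row.
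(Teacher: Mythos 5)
Your proof is correct, and it takes a somewhat different route from the paper's. The paper proves the identity by a generalized Laplace expansion: it expands each $[\alpha+e(H)|\beta]$ along the $k$ rows indexed by $H$ and each $[\alpha|\beta+c\cdot e(G)]$ along the $k$ columns indexed by $G$, applies the elementary shift relation only to the resulting $k\times k$ subminors, $[\alpha_H+\mathbf{1}|\beta_G]=[\alpha_H|\beta_G+c\cdot\mathbf{1}]$, and concludes by interchanging the double sum over $(H,G)$, with the cofactor signs $(-1)^H(-1)^G$ matching on both sides. You instead isolate the extended-Hankel input into a single observation — both sides arise from one matrix $A$ by substituting rows, respectively columns, of a common ``shifted'' matrix $B$ — thereby reducing the lemma to an identity valid for an arbitrary pair of $t\times t$ matrices, which you verify at the level of individual Leibniz terms via the bijection $H=\sigma^{-1}(G)$ on $k$-subsets. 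What the paper's argument buys is brevity and consistency with the minor calculus used throughout; what yours buys is that it avoids the sign bookkeeping of multi-row Laplace expansion and makes transparent exactly where the Hankel structure is used (only in identifying $B$ as simultaneously the row-shift and the column-shift of $A$). As a further compression of your general identity, note that both sides are the coefficient of $\lambda^k$ in $\det(A+\lambda B)$, computed once by multilinearity in the rows and once by multilinearity in the columns; this gives the same bijective cancellation in one line.
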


\begin{proof} (a) Set  $(-1)^H=(-1)^{\sum_{i\in H}i}$ and $\textbf{1}=(1,\dots,1)\in \mathbb{Z}^k$. Expanding
the minor $[\alpha+e(H)|\beta]$ with respect to the rows with indices by $H$ and
expanding the minor $[\alpha|\beta+ce(G)]$ with respect to the columns with
indices by $G$ one has: 
$$
\begin{array}{l}
\sum_{H} [\alpha+e(H)|\beta]=
\sum_{H}\sum_{G} (-1)^H(-1)^G [\alpha_H+\textbf{1}|\beta_G]
[\alpha^{\wedge H}|\beta^{\wedge G}]=\\ \\
\sum_{G}\sum_{H} (-1)^G(-1)^H [\alpha_H|\beta_G+c.\textbf{1}]
[\alpha^{\wedge H}|\beta^{\wedge G}]=
\sum_{G}[\alpha|\beta+c.e(G)].
\end{array}
$$
\end{proof}
\begin{corollary}
\label{co1lm3}
(a) If  $j>t$, then every $t$-minor of $X_j$ is a
linear combination of $t$-minors of $X_{j-1}.$\\
(b) Every $t$-minor of $X$ is a linear combination of maximal $t$-minors. In particular, if $A$ is a $t$-minor then we have $ A = A'+\sum_i\beta_iB_i$ with $A',B_i$ maximal $t$-minors, $\beta_i\in \mathbb{Z}$ and $ \ini(A)=\ini(A')>\ini(B_i),$ for all $i$.\\
(c) $I_t(X_{j+1})\subset I_t(X_j)$ for all $j=t,\dots,m-1.$
\end{corollary}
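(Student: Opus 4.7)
The plan is to handle the three assertions by induction, with Lemma~\ref{lm3} as the main tool. Part (b) comes first, part (a) is a parallel argument that tracks column bounds, and part (c) is immediate from (a).

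For (b), I induct on $N(\alpha) := \sum_{p=1}^{t}(\alpha_p - p)$. If $N(\alpha) = 0$ then $\alpha = (1, \ldots, t)$ and $A$ is already maximal. Otherwise let $i$ be the smallest index with $\alpha_i > i$. For $i = 1$, the elementary relation $[\alpha|\beta] = [\alpha - \mathbf{1}|\beta + c\mathbf{1}]$ produces an equal minor with $N$ strictly smaller and the induction hypothesis applies directly. For $i \geq 2$, set $\alpha' := \alpha - \sum_{\ell=i}^{t} e_\ell$; this is strictly increasing since $\alpha_{i-1} = i - 1 < i \leq \alpha_i - 1$. Applying Lemma~\ref{lm3} with $k = t - i + 1$, the subset $H = \{i, \ldots, t\}$ on the left-hand side reproduces $A$ exactly, so I can solve for $A$ as a signed sum of the minors $[\alpha'|\beta + c\,e(G)]$ and $[\alpha' + e(H)|\beta]$. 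The distinguished term $[\alpha'|\beta + c\,e(\{i, \ldots, t\})]$ has the same main-diagonal monomial as $A$ but strictly smaller $N$, and every other term has strictly smaller $N$ too. By induction, each term decomposes as $A' + \sum \beta_k B_k$ with the required leading-term control, and collecting these decompositions gives the statement for $A$, provided one also checks that every non-distinguished term has $\ini$ strictly below $\ini(A)$.

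For (a), one may assume $\alpha_t = j$ (otherwise $A$ is already in $X_{j-1}$). If $\alpha_1 \geq 2$, the elementary relation $A = [\alpha - \mathbf{1}|\beta + c\mathbf{1}]$ simultaneously lowers the maximum row to $j - 1$ and keeps the columns within $\beta_t + c \leq n - (j-1)c + c = n - (j-2)c$, so the resulting minor lies in $X_{j-1}$. If $\alpha_1 = 1$, I induct on the smallest missing value $i \geq 2$, again applying Lemma~\ref{lm3} with the same $\alpha'$ and $k$. Solving for $A$, the right-hand side splits into three classes of terms: (i) terms whose maximum row is already $\leq j - 1$, hence in $X_{j-1}$; (ii) terms in which the new $\alpha_1 = 2$, brought into $X_{j-1}$ by one further elementary relation; and (iii) terms still with $\alpha_1 = 1$ and $\alpha_t = j$ but whose smallest missing index has dropped to some $p_0 < i$, to which the inductive hypothesis applies. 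Part (c) is then immediate: every generator of $I_t(X_{j+1})$ is a $t$-minor of $X_{j+1}$, so by (a) it is a linear combination of $t$-minors of $X_j$ and hence lies in $I_t(X_j)$.

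The main obstacle is the leading-term comparison needed in (b): showing that every non-distinguished term in the Lemma~\ref{lm3} identity has initial monomial strictly less than $\ini(A) = \prod_p x_{(\alpha_p - 1)c + \beta_p}$. Each such term differs from $A$ on its main diagonal by one or more paired substitutions $x_a \mapsto x_{a+c}$ (at a position $p < i$) together with $x_b \mapsto x_{b-c}$ (at a position $q \geq i$). The strict monotonicity of $\alpha$ and $\beta$ forces $b - a \geq (q - p)(c+1) \geq c + 1$, which in turn guarantees that the smallest index appearing in the new diagonal strictly exceeds $a$. A short lexicographic computation on exponent vectors then yields $\ini(A) > \ini(\cdot)$ in each case and closes the induction; getting this bookkeeping exactly right when several swaps occur simultaneously is where I expect the real technical care to lie.
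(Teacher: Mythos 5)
Parts (a) and (c) of your argument are sound: your proof of (a), inducting downward on the smallest missing row value and using Lemma~\ref{lm3} together with the elementary relation, is essentially the paper's own proof of (a) (the paper inducts on the smallest index $h$ with $g_h=j+h-t$, i.e.\ on the length of the ``tight tail''), and (c) is indeed immediate from (a). The genuine gap is in your proof of (b). When you solve the Lemma~\ref{lm3} identity for $A=[\alpha|\beta]$, the row-shifted terms $[\alpha'+e(H)|\beta]$ with $|H|=k=t-i+1$ and $H\neq\{i,\dots,t\}$ satisfy $N(\alpha'+e(H))=N(\alpha')+k=N(\alpha)$: their $N$-value equals that of $A$, it is not strictly smaller. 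So your claim that ``every other term has strictly smaller $N$ too'' is false (only the column-shifted terms $[\alpha'|\beta+c\,e(G)]$ drop $N$), and the induction on $N$ alone does not close. The step is repairable: for those row-shifted terms the first index with $\alpha_p>p$ strictly decreases, so a lexicographic induction on the pair $(N,i)$ works, the case $i=1$ being disposed of by the elementary relation, which drops $N$ outright with no new terms. More simply, you can follow the paper's route: prove (a) first (your argument for it is fine) and get (b) by iterating (a) from $X_j$ down to $X_t$, since every $t$-minor of $X_t$ is automatically maximal.

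The second problem is the leading-term assertion $\ini(A)=\ini(A')>\ini(B_i)$, which you leave as an unfinished ``bookkeeping'' sketch and which, as you set it up, also relies on the unproved claim that the initial term of an arbitrary (non-maximal) minor is its main-diagonal product. None of this is needed. Once you have any expression of $A$ as an integer combination of distinct maximal minors, note that distinct maximal minors have distinct initial monomials (a maximal minor is determined by its main diagonal, which is its initial term); hence no cancellation among the leading terms of the summands can occur, so $\ini(A)$ is the largest of the initial monomials occurring, it is attained by exactly one summand $A'$, and $\ini(B_i)<\ini(A)=\ini(A')$ for all the others. This is exactly the paper's short justification of the second half of (b), and it completely replaces the swap-by-swap lexicographic comparison that you flag as the main technical difficulty.
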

 
\begin{proof}(a) Let $[g|d]=[g_1,\dots,g_t|d_1,\dots,d_t]$ be a $t$-minor of $X_j$. Assume that $g_i<g_{i+1}$ and $d_i<d_{i+1}$ for all $i$. If $g_t<j$ then $[g|d]$ is already a $t$-minor of $X_{j-1}$. If $g_t=j$, then let $h$ be the smallest integer such that $g_h=j+h-t$. Now applying the equation in Lemma \ref{lm3} to the sequences $\alpha=g_1,\dots,g_{h-1},g_h-1,\dots,g_t-1$, $\beta=d$ and with $k=t-h+1$ one writes  $[g|d]$  as a linear combination  of $t$-minors which are either in $X_{j-1}$  or in $X_j$ but with a bigger $``h"$. Arguing by induction on $t-h$ one obtains the desired expression.

(b) and (c) follow immediately from (a) and one has $\ini(A)=\ini(A'),$ $ \ini(A) \neq \ini(B_{i}) , \ini(B_{i}) \neq \ini(B_{j})$ so we have $ \ini(A)=\ini(A')>\ini(B_{i}) $ for all $i$. 
\end{proof}
\begin{remark}
(a) If $c=1$, then we have $I_t(X_j)=I_t(X_t)$ for all $j=t+1,\dots,m$ (see \cite[Corollary 2.2]{C}).\\
(b) This assertion is not true in general for $c>1$. For example with  $c=2, n=8$ and $t=2$, we have $I_2(X_3)\not=I_2(X_2)$.  
\end{remark}

\begin{definition}
In $\mathbb{N}$  we introduce the following partial order:
 $$i\leq_c j \qquad \mbox{if and only if}\qquad i=j \mbox{ or }  i+c< j.$$
We write $i<_cj$ if $i\leq_c j$ and $i\not=j$. We say that a sequence of integers $a_1,a_2,\dots,a_s$ is a $c$-chain if
$a_1<_c a_2 <_c \cdots <_c a_s$. Similarly we say that a monomial $x_{a_1}\cdots x_{a_s}$ is a $c$-chain if its indices form a $c$-chain.
\end{definition}
We have a result relating $c$-chains and perfect graphs in the following way.
\begin{lemma}
\label{lm4}
Let $G$ be the graph  with vertices $V(G)=[n]$ and the set of edges $E(G)=\{(i,j) | i<_cj \; or\; j<_ci\}$. Then G is perfect.
\end{lemma}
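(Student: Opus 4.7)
The plan is to verify directly Berge's definition: $\chi(G_V)=\omega(G_V)$ for every induced subgraph $G_V$ on $V\subseteq[n]$. First I would identify $\omega(G_V)$ as the length $\ell$ of the longest $c$-chain contained in $V$. This is immediate since, by the definition of the edge set, a subset of $V$ is a clique in $G_V$ exactly when its elements are pairwise in the relation $<_c$, i.e.\ form a $c$-chain.

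Next I would exhibit an explicit proper coloring of $G_V$ using only $\ell$ colors. For each $v\in V$ set
\[
\phi(v)=\max\{s : \text{there exists a }c\text{-chain } a_1<_c a_2<_c \cdots <_c a_s=v \text{ with } a_i\in V\}.
\]
By construction $\phi$ takes values in $\{1,2,\dots,\ell\}$, so it only remains to check that $\phi$ is proper.

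The key step is a monotonicity observation: if $u,v\in V$ with $u<_c v$, then any $c$-chain of length $\phi(u)$ ending at $u$ can be extended by $v$ to a $c$-chain in $V$ of length $\phi(u)+1$ ending at $v$, hence $\phi(v)\geq\phi(u)+1$. Consequently $\phi(u)=\phi(v)$ forces $u\not<_c v$ and $v\not<_c u$, i.e.\ $\{u,v\}\notin E(G_V)$. Thus $\phi$ is a proper coloring, so $\chi(G_V)\leq\ell=\omega(G_V)$, and combined with the trivial bound $\chi\geq\omega$ we get equality. Since $V$ was arbitrary, $G$ is perfect.

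I do not expect any serious obstacle: the argument is the standard longest-chain coloring adapted to the relation $<_c$, and the only thing to check is that extending a longest chain by a $<_c$-larger vertex stays inside $V$, which is automatic. As a sanity check one can alternatively note that the complement $\bar G$ is an interval graph (assign to $i\in[n]$ the interval $[i,i+c]\subset\mathbb{R}$, so that $i,j$ are adjacent in $\bar G$ iff the intervals meet), hence chordal and in particular perfect, so that $G$ itself is perfect by Lov\'asz's weak perfect graph theorem.
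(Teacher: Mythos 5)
Your main argument is correct and is essentially the paper's proof: both identify the cliques of $G_V$ with $c$-chains (so $\omega(G_V)$ is the longest $c$-chain length $\ell$) and then exhibit a proper $\ell$-coloring, and your height function $\phi(v)$ (longest $c$-chain in $V$ ending at $v$) is in fact the same coloring that the paper describes by partitioning $V$ into the windows $[a_t,a_t+c]$ along the greedily constructed maximal chain $a_1,\dots,a_k$. Your closing sanity check is a genuinely different, also valid, route: the complement of $G$ is the interval graph of the intervals $[i,i+c]$, hence perfect, and Lov\'asz's weak perfect graph theorem transfers perfection to $G$; this trades the explicit coloring for a much heavier theorem, whereas your direct argument amounts to observing that $<_c$ is a partial order and $G$ its comparability graph, with the Mirsky-type chain coloring giving $\chi=\omega$ on every induced subgraph.
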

\begin{proof}
Let $H$ be a subgraph of $G$.
Denote by $x_{a_1}x_{a_2}\cdots x_{a_k}$ the maximal $c$-chain with respect to term order $<$ which divides the monomial $\prod_{i\in H}x_i$. Obviously, the $c$-chain $a_1,\dots,a_k$ can be constructed as follows: 
\begin{itemize}
  \item [-] $a_1$ is the smallest vertex of $H$.
  \item [-] If the set $\{i|i\in H,a_{t-1}<_ci\}\not=\emptyset$, we set $a_t=\min\{i|i\in H,a_{t-1}<_ci\}$ for all $t\geq 2$.
\end{itemize}
 Recall $\chi(H)$ the chromatic number of the graph $H$ and $\omega(H)$ the clique number of the graph $H$. We will prove that $\chi(H)=\omega(H)=k.$ 

The subgraph of $H$ induced by the vertices $\{a_1,\dots,a_k\}$ is a $k$-complete subgraph of $H$. So $k\leq \omega(H).$
Assume that $\{b_1,\dots,b_l\}$ induces an $l$-clique of $H$. We have that $b=b_1,\dots,b_l$ is a $c$-chain of $H$. Because $\prod_{i=1}^lx_{b_i}\leq\prod_{j=1}^kx_{a_j}$, we get $l\leq k$. So $\omega(H)=k.$

Set $V_1=\{a_1,a_1+1,\dots,a_1+c\}$, $V_2=\{a_2,a_2+1,\dots,a_2+c\}$,\dots,$V_k=\{a_k,a_k+1,\dots,a_k+c\}$.
We have that $V_1\cap H$,\dots, $V_k\cap H$ is a $k$-coloring of $H$. So $\chi(H)\geq k$. Assume that $l=\chi(H)$ and $V_1,\dots,V_l$ are sets of colors. Denote  $g_t=\min\{j|j\in V_t\}$ for all $t=1,\dots,l$. If $g_1<g_2<\cdots<g_l$ then $g_1,\dots,g_l$ is a $c$-chain of $H$ so $l\leq k$, and thus $\chi(H)=k$.  
\end{proof}
The sum of $r$ matrices of rank $\leq$1 has rank $\leq r$. Hence the $(r+1)$-minors of $X$ vanish on $V(I_2^{\{r\}}).$ Now, the ideal $I_2$ is easily seen to be prime over any field, and hence $I_2^{\{r\}}$ is geometrically prime. This implies that $I_{r+1}\subseteq I_2^{\{r\}}$.

Using Buchberger's Algorithm, it is easy to prove the following lemma:
\begin{lemma}
\label{Gb1}
The set of 2-minors of $X_2$ is a Gr\"obner basis of $I_2.$ 
\end{lemma}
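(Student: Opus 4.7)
The plan is to verify Buchberger's criterion directly. With $f_{ij}=[1,2\mid i,j]=x_ix_{j+c}-x_jx_{i+c}$ for $1\le i<j\le n-c$, the leading monomial under the degree-lex order is $\ini(f_{ij})=x_ix_{j+c}$, since $x_i$ is the lex-largest variable occurring in $f_{ij}$ and appears only in the first term. Thus it suffices to show that every S-polynomial reduces to zero modulo the set $\mathcal{G}=\{f_{ij}\}$.

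By the coprime-leading-monomial criterion I need only consider pairs $(f_{ij},f_{kl})$ whose leading monomials share a variable. The overlap patterns, up to swapping the two minors, are $i=k$, $j=l$, $i=l+c$, and $k=j+c$. In the first two cases a direct computation collapses the S-polynomial to a single monomial multiple of another generator,
\[
S(f_{ij},f_{il})=-x_{i+c}f_{jl},\qquad S(f_{ij},f_{kj})=x_jf_{ik},
\]
so both reduce in one step.

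The remaining two cases are dual, so I focus on $i=l+c$, i.e.\ $l=i-c$ with $k<i-c$. A direct calculation gives
\[
S(f_{ij},f_{k,i-c})=-x_kx_jx_{i+c}+x_{i-c}x_{k+c}x_{j+c},
\]
whose leading monomial $x_kx_jx_{i+c}$ is divisible by $\ini(f_{k,j-c})=x_kx_j$ (the minor $f_{k,j-c}$ is valid since $k<i-c<j-c\le n-c$). One reduction step by $x_{i+c}f_{k,j-c}$ leaves $x_{k+c}\bigl(x_{i-c}x_{j+c}-x_{j-c}x_{i+c}\bigr)$, and the remaining task is to rewrite the ``shift'' binomial in parentheses as a combination of $2$-minors: it equals $f_{i-c,j}+f_{i,j-c}$ when $j>i+c$, it equals $f_{i-c,j}-f_{j-c,i}$ when $j<i+c$, and it equals $f_{i-c,j}$ alone in the borderline case $j=i+c$. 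Each summand then reduces against its own leading term, annihilating the S-polynomial.

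The main obstacle is the ``diagonal'' cases $i=l+c$ and $k=j+c$, where two successive reductions are required and the choice of the secondary reducing minor depends on the size of $j$ relative to $i+c$. The key identity $x_{i-c}x_{j+c}-x_{j-c}x_{i+c}\in I_2$ is a $t=2$ instance of the shift relation of Lemma \ref{lm3}; once it is in hand, all remaining manipulations are mechanical.
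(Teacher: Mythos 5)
Your proposal is correct and follows essentially the route the paper itself indicates: the paper simply asserts the lemma is "easy to prove using Buchberger's Algorithm," and your explicit verification of Buchberger's criterion — the coprime criterion, the two one-step cases $S(f_{ij},f_{il})=-x_{i+c}f_{jl}$ and $S(f_{ij},f_{kj})=x_jf_{ik}$, and the diagonal overlaps handled via $x_{i-c}x_{j+c}-x_{j-c}x_{i+c}=f_{i-c,j}+f_{i,j-c}$ (resp.\ $f_{i-c,j}-f_{j-c,i}$, resp.\ $f_{i-c,j}$) — supplies exactly the omitted computation, and all the index constraints you need (e.g.\ $k<j-c$ so that $f_{k,j-c}$ exists) do hold.
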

\begin{corollary}
\label{lmgb}With the above notation one has:$$\ini(I_2)=\big<x_{a_1}x_{a_2}:\;a_1,a_2\; is\;c-chain\; with \;length \;2\big>.$$
\end{corollary}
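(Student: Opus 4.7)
The plan is to deduce the statement directly from Lemma \ref{Gb1} by computing the initial monomial of an arbitrary $2$-minor of $X_2$ and then matching the answer against the definition of $c$-chain. Since the leading monomials of a Gr\"obner basis generate the initial ideal, the first step is to identify $\ini([1,2|b_1,b_2])$ for every $1 \leq b_1 < b_2 \leq n-c$.

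Writing out the minor, $[1,2|b_1,b_2] = x_{b_1}x_{b_2+c} - x_{b_2}x_{b_1+c}$. Both terms have degree $2$, so under the degree lexicographic order with $x_1 > \cdots > x_n$ the comparison reduces to ordinary lex, which picks the monomial containing the variable with the smallest index. In the first term that index is $b_1$, since $b_1 < b_2 < b_2+c$; in the second term it is $\min(b_2,b_1+c)$, and this is strictly larger than $b_1$ whether or not $b_1+c$ exceeds $b_2$. Hence $\ini([1,2|b_1,b_2]) = x_{b_1}x_{b_2+c}$ uniformly.

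The second step is the change of parameters $a_1 := b_1$, $a_2 := b_2 + c$. The constraints $1 \leq b_1 < b_2 \leq n-c$ translate to $1 \leq a_1$, $a_2 \leq n$, and $a_1 + c < a_2$, i.e.\ exactly the condition that $a_1,a_2$ form a $c$-chain of length $2$ with entries in $[n]$. The substitution is clearly invertible, giving a bijection between the $2$-minors of $X_2$ and length-$2$ $c$-chains. Combining this bijection with Lemma \ref{Gb1} yields
\[
\ini(I_2) = \bigl\langle \ini([1,2|b_1,b_2]) : 1 \leq b_1 < b_2 \leq n-c \bigr\rangle = \bigl\langle x_{a_1}x_{a_2} : a_1,a_2 \text{ is a } c\text{-chain}\bigr\rangle,
\]
which is the claim.

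There is no genuine obstacle; the only mild subtlety is verifying that the leading term $x_{b_1}x_{b_2+c}$ is correct in both of the cases $b_1+c \leq b_2$ and $b_1+c > b_2$, but this is subsumed by the single observation that $b_1$ is strictly smaller than each of $b_2$, $b_1+c$, and $b_2+c$.
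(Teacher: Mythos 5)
Your proof is correct and is exactly the argument the paper leaves implicit: the corollary is stated without proof as an immediate consequence of Lemma \ref{Gb1}, and your computation of $\ini([1,2|b_1,b_2])=x_{b_1}x_{b_2+c}$ together with the bijection $a_1=b_1$, $a_2=b_2+c$ between $2$-minors of $X_2$ and length-$2$ $c$-chains is precisely the intended deduction.
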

\begin{theorem}
\label{thm6}
The term order $<$ is delightful for $I_2.$
\end{theorem}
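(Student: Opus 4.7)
The plan is to prove the two inclusions separately. The inclusion $\ini_<(I_2^{\{r\}}) \subseteq (\ini_<(I_2))^{\{r\}}$ is immediate from Theorem \ref{thm1}, so the whole content is to establish the reverse inclusion $(\ini_<(I_2))^{\{r\}} \subseteq \ini_<(I_2^{\{r\}})$. I would first identify the minimal monomial generators of the left-hand side, then lift each of them to an element of $I_2^{\{r\}}$ whose leading monomial is that generator.

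For the identification step: by Corollary \ref{lmgb}, $\ini_<(I_2) = I(G)$ is the edge ideal of the graph $G$ of Lemma \ref{lm4}, which is perfect. Applying Theorem \ref{thm2} combined with perfectness (so that on every induced subgraph $\chi(G_V) = \omega(G_V)$), the characterization ``$m_V$ is a minimal generator iff $G_V$ is not $r$-colorable but every proper $G_U$ is'' forces $V$ to be precisely an $(r+1)$-clique of $G$. In our graph, an $(r+1)$-clique is a set $\{a_1,\dots,a_{r+1}\}$ whose elements are pairwise $c$-separated, i.e., a $c$-chain of length $r+1$. Therefore the minimal generators of $(\ini_<(I_2))^{\{r\}}$ are exactly the monomials $x_{a_1}\cdots x_{a_{r+1}}$ with $a_1 <_c \cdots <_c a_{r+1}$.

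For the lifting step, recall from the discussion just above Lemma \ref{Gb1} that $I_{r+1}\subseteq I_2^{\{r\}}$. It then suffices to exhibit, for each $c$-chain $a_1<_c\cdots<_c a_{r+1}$, an $(r+1)$-minor of $X$ whose initial term is $x_{a_1}\cdots x_{a_{r+1}}$. The natural candidate is the maximal minor $M(a_1,\dots,a_{r+1})$ of $X_{r+1}$: its column indices $c_i = a_i-(i-1)c$ are strictly increasing and in range precisely because of the $c$-chain hypothesis, and its main diagonal is $x_{a_1},\dots,x_{a_{r+1}}$. The remaining claim is that this diagonal is indeed the initial term under the degree-lex order with $x_1>\cdots>x_n$. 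A term of the determinant is $\pm\prod_i x_{c_{\sigma(i)}+(i-1)c}$; I would argue that, for $\sigma\ne\mathrm{id}$, if $k$ is the smallest index with $\sigma(k)\ne k$, then the first $k-1$ entries of the sorted index multiset of the $\sigma$-term coincide with $(a_1,\dots,a_{k-1})$ (they remain the smallest), while the $k$-th sorted entry strictly exceeds $a_k = c_k+(k-1)c$, because $\sigma(k)\ge k+1$ forces $c_{\sigma(k)}+(k-1)c > a_k$ and every $i>k$ contributes $c_{\sigma(i)}+(i-1)c\ge c_k+kc>a_k$. Hence the identity term has the lex-smallest sorted index sequence, i.e.\ the largest monomial in degree-lex, proving the initial term claim.

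Putting these together shows every minimal generator of $(\ini_<(I_2))^{\{r\}}$ belongs to $\ini_<(I_{r+1})\subseteq \ini_<(I_2^{\{r\}})$, which together with the Simis--Ulrich inclusion gives equality. The main obstacle is the leading-term computation for an arbitrary maximal minor; for $r=1$ it is Lemma \ref{Gb1}, and in general it reduces to the short permutation argument sketched above, which mimics the standard Hankel-matrix analysis.
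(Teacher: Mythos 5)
Your proof is correct and follows essentially the same route as the paper: it identifies $(\ini(I_2))^{\{r\}}$ as the ideal of $c$-chain monomials of length $r+1$ via the perfectness of the graph in Lemma \ref{lm4} together with Theorem \ref{thm2} and Proposition \ref{prop3}, lifts each such monomial to the maximal minor $M(a_1,\dots,a_{r+1})\in I_{r+1}\subseteq I_2^{\{r\}}$, and closes the circle of inclusions with Theorem \ref{thm1}. The only difference is that you spell out the permutation argument showing the main diagonal is the leading term of the maximal minor, a point the paper asserts without proof; your argument for it is sound.
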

\begin{proof}
Let $G$ be the graph as in Lemma \ref{lm4}. We have $I(G)=\ini(I_2)$.
Because $G$ is a perfect graph, Theorem \ref{thm2} and Proposition \ref{prop3} imply that 
$$I(G)^{\{r\}}=\Big<m_V|\chi(V)>r\Big>=\Big<x_{a_0}x_{a_1}\cdots x_{a_r}|a_0,a_1,\dots,a_r \; is\; c-chain\Big>.$$
Each such monomial is the $<$-leading term of an $(r+1)$-minor of $X_{r+1}.$
This implies that $I(G)^{\{r\}}\subseteq \ini(I_{r+1})\subseteq\ini(I_2^{\{r\}})\subseteq (\ini(I_2))^{\{r\}}=I(G)^{\{r\}}.$ Hence, $\ini(I_2^{\{r\}})=(\ini(I_2))^{\{r\}}$ for all $r\geq 1$. 
\end{proof}
\begin{corollary}
\label{co1thm6}
The secant ideal $I_2^{\{r\}}$ is generated by the $(r+1)$-minors $$I_{r+1}=I_2^{\{r\}},$$
these minors form a $Gr\ddot{o}bner$ basis.
\end{corollary}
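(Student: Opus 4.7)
The plan is to combine the inclusion chain already assembled inside the proof of Theorem~\ref{thm6} with the containment $I_{r+1} \subseteq I_2^{\{r\}}$ noted right before Lemma~\ref{Gb1} (the $(r+1)$-minors vanish on $V(I_2^{\{r\}})$), and then to conclude by matching initial ideals.

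The one step that is not pure bookkeeping is to identify the leading monomial of a maximal $(r+1)$-minor. Via a standard diagonal-dominance computation, for any $c$-chain $a_0<_c a_1<_c\cdots<_c a_r$ the leading term of $M(a_0,\dots,a_r)$ under our degree-lexicographic order is precisely $x_{a_0}x_{a_1}\cdots x_{a_r}$: swapping two adjacent rows in the diagonal product $\prod_i x_{a_i}$ replaces a factor pair $(x_{a_k}, x_{a_{k+1}})$ by $(x_{a_k+c}, x_{a_{k+1}-c})$, whose minimum index strictly exceeds $a_k$ by the $c$-chain inequality $a_{k+1}>a_k+c$; iterating confirms that the identity permutation dominates. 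Consequently the initial monomials of the maximal $(r+1)$-minors of $X_{r+1}$ are exactly the squarefree generators of $I(G)^{\{r\}}$ exhibited in the proof of Theorem~\ref{thm6}.

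Chaining together the inclusions from that proof,
$$I(G)^{\{r\}} \subseteq \ini(I_{r+1}) \subseteq \ini(I_2^{\{r\}}) \subseteq (\ini(I_2))^{\{r\}} = I(G)^{\{r\}},$$
so all four ideals agree. Thus the maximal $(r+1)$-minors form a Gr\"obner basis of $I_{r+1}$, and by Corollary~\ref{co1lm3}(b), which expresses every $(r+1)$-minor of $X$ as a linear combination of maximal ones with the same leading term, the full set of $(r+1)$-minors is a Gr\"obner basis as well. Finally, $I_{r+1}\subseteq I_2^{\{r\}}$ together with $\ini(I_{r+1})=\ini(I_2^{\{r\}})$ forces $I_{r+1}=I_2^{\{r\}}$.

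I expect no real obstacle here beyond the diagonal-dominance calculation; the remainder is simply tracking inclusions already established.
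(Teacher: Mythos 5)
Your proposal is correct and takes essentially the same route as the paper: the containment $I_{r+1}\subseteq I_2^{\{r\}}$ established before Lemma~\ref{Gb1}, combined with the chain $I(G)^{\{r\}}\subseteq \ini(I_{r+1})\subseteq \ini(I_2^{\{r\}})\subseteq (\ini(I_2))^{\{r\}}=I(G)^{\{r\}}$ from the proof of Theorem~\ref{thm6}, yielding the Gr\"obner basis property and hence the equality of ideals. The only additions are your explicit diagonal-dominance check that $\ini M(a_0,\dots,a_r)=x_{a_0}\cdots x_{a_r}$ (a fact the paper asserts without proof inside Theorem~\ref{thm6}) and the passage from maximal to arbitrary $(r+1)$-minors via Corollary~\ref{co1lm3}(b), both of which are consistent with the paper's argument.
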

\begin{proof}
In the proof of Theorem \ref{thm6} we have argued that the $(r+1)$-minors lie in $I_2^{\{r\}}$, and their leading terms generate the initial ideal $\big(\ini(I_2)\big)^{\{r\}}=\big((I_2)^{\{r\}}\big).$ This implies that the $(r+1)$-minors form a $Gr\ddot{o}bner$ basis for the ideal $I_2^{\{r\}}.$ In particular, they generate that ideal.
\end{proof}

Let $k=k_1,k_2,\dots,k_s$ be a sequence of integers. We define the function
$$\gamma_t(k)=\sum_{i=1}^s \max\{ k_i+1-t, 0\}.$$

Let $\delta$ be a monomial of $R$. We now describe a canonical decomposition of $\delta$ into a product of  $c$-chains. First let  $\delta_1$ be the $c$-chain which divides $\delta$ and is maximal with respect to $<$. If $\delta_1\neq \delta$, then let $\delta_2$ be the $c$-chain which divides $\delta/\delta_1$ and is maximal with respect to $<$, and so on. We end up with a decomposition $\delta=\delta_1\delta_2\cdots \delta_k$ which is uniquely determined by $\delta$. It is called \textit{c-decomposition}. Denote by $s_i$ the degree of $\delta_i$. The sequence $s_\delta=s_1,s_2,\dots,s_k$ is called the shape of $\delta$. We define the function $ \gamma_{t,c}(\delta)=\gamma_t(s_\delta)$. One has:
\begin{lemma}
\label{cc}
Let $a$ and $b$ be two $c$-chains of length $s$, respectively $r$. Then the $c$-decomposition of $ab$ has at most two factors and one of them has length $\geq \max(s,r)$.
\end{lemma}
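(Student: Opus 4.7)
The length claim is immediate: $\delta_1$ is by definition the $c$-chain dividing $ab$ maximal with respect to the degree-lex order $<$, and since $a,b$ are themselves $c$-chains dividing $ab$ of degrees $s,r$, we get $|\delta_1|\ge\max(s,r)$ for free.

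For the claim that there are at most two factors, I will prove $R:=ab/\delta_1$ is itself a $c$-chain. Writing $\delta_1=x_{g_1}\cdots x_{g_k}$ with $g_1<_c\cdots<_c g_k$, the $g_i$ are determined greedily from the left: $g_1$ is the smallest index of $ab$ and $g_{i+1}$ is the smallest index exceeding $g_i+c$. (Both maximality in degree and lex are achieved by the left-greedy sweep.) The combinatorial input is that the multiset $M$ of indices of $ab$ meets every window $[v,v+c]$ in at most two elements, since $a$ and $b$ are each $c$-chains and each contributes at most one. Hence every ``slot'' $[g_i,g_i+c]$ contains $g_i$ and at most one further element, which must belong to $R$ (it cannot be another $g_j$, whose distance from $g_i$ must exceed $c$). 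Moreover, every element of $R$ lies in some slot: if $r\in R$ satisfied $r>g_k+c$, greedy would extend $\delta_1$ by $r$, contradicting maximality.

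Enumerate $R=\{r_1<r_2<\cdots\}$ with $r_j$ in slot $[g_{i_j},g_{i_j}+c]$; I must show $r_{j+1}-r_j>c$. Since each slot holds at most one $R$-element, $i_{j+1}>i_j$. The ``gap case'' $i_{j+1}\ge i_j+2$ is immediate: $r_{j+1}\ge g_{i_j+2}\ge g_{i_j}+2(c+1)$ while $r_j\le g_{i_j}+c$, so $r_{j+1}-r_j\ge c+2$. The main obstacle is the adjacent case $i_{j+1}=i_j+1$, handled by a fourfold case analysis on which of $a,b$ contains each of $g_{i_j},g_{i_j+1}$. Since $r_j$ lies within distance $c$ of $g_{i_j}$ (or coincides with it as a distinct multiset element), the $c$-chain property of $a,b$ forces $r_j$ into the chain opposite to the one containing $g_{i_j}$, and analogously for $r_{j+1}$ and $g_{i_j+1}$. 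If $g_{i_j}$ and $g_{i_j+1}$ belong to the same chain, then $r_j$ and $r_{j+1}$ share the opposite chain, and the $c$-chain property of that chain gives $r_{j+1}-r_j>c$ directly. Otherwise, $r_j$ and $g_{i_j+1}$ share a chain; its $c$-chain property yields $g_{i_j+1}-r_j>c$, and then $r_{j+1}\ge g_{i_j+1}$ finishes.
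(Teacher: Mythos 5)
Your argument is correct, but it is organized differently from the paper's. Both proofs rest on the same combinatorial fact (an interval of length $c$ meets the multiset of indices of $ab$ in at most two elements, at most one from each of $a$ and $b$), but the paper rules out a \emph{third} factor by contradiction: if the $c$-decomposition were $(\alpha)(\beta)(\gamma)\cdots$, maximality of the first two factors forces the first entry $\gamma_1$ of the third to lie in a window $[\alpha_{i_0},\alpha_{i_0}+c]$ over the first factor and in a window $[\beta_{j_0},\beta_{j_0}+c]$ over the second, so taking the smaller of $\alpha_{i_0},\beta_{j_0}$ puts three multiset elements in one length-$c$ window, a contradiction. You instead prove directly that the residual $ab/\delta_1$ is a $c$-chain (an equivalent formulation of ``at most two factors''), covering the indices by the disjoint slots $[g_i,g_i+c]$ of the greedy maximal chain and doing a case analysis on adjacent slots and on which of $a,b$ the relevant elements belong to. The paper's route is shorter and never needs to track $a$- versus $b$-membership, but it uses the greedy maximality of both $\delta_1$ and $\delta_2$; yours uses only the structure of $\delta_1$ and yields a bit more information (the second factor is literally the leftover, one leftover per slot), at the cost of the fourfold case analysis. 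Two small points you leave implicit are harmless but worth a line each: the greedy left-sweep description of the deglex-maximal $c$-chain (the paper also takes this as obvious), and the fact that no index of $ab$ lies strictly between $g_i+c$ and $g_{i+1}$ (immediate from the minimality in the greedy choice), which is what makes the slots cover all of $R$, not only the range up to $g_k+c$; note also that your strict enumeration $r_1<r_2<\cdots$ is legitimate precisely because each slot holds at most one leftover, so a repeated value cannot survive twice in $R$.
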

\begin{proof}
By the definition of $c$-decomposition, we only need to  show that the $c$-decomposition of $ab$ has at most two factors. Assume that $a=a_1,\dots,a_s$, $b=b_1,\dots,b_r$ are $c$-chains with $s\geq r$. We have 
$|\multiset\{a_1,\dots,a_s,b_1,\dots,b_r\}\cap [b_j,b_j+c]|\leq 2$ and $|\multiset\{a_1,\dots,a_s,b_1,\dots,b_r\}\cap [a_i,a_i+c]|\leq 2$ for all $i=1,\dots,s,j=1,\dots,r$.\\ If $(\alpha_1\cdots\alpha_t)(\beta_1\cdots\beta_p)(\gamma_1\cdots\gamma_q)\cdots$ is the $c$-decomposition of $ab$, we have $\alpha_i$,$\beta_j,$ $\gamma_k$ $\in \{a_1,\dots,a_s,b_1,\dots,b_r\}$ for all $i,j,k.$ Moreover, there exist $i_0,j_0$ such that $\gamma_1\in [\alpha_{i_0},\alpha_{i_0}+c]$ and $\gamma_1\in [\beta_{j_0},\beta_{j_0}+c]$. Assume that $\alpha_{i_0}\leq \beta_{j_0}$. This implies that $\alpha_{i_0},\beta_{j_0},\gamma_1\in [\alpha_{i_0},\alpha_{i_0}+c]$, a contraction.  
\end{proof}
\begin{corollary} Let $a$ and $b$ be two $c$-chains. Then $\gamma_{t,c}(ab)\geq  \gamma_{t,c}(a)+\gamma_{t,c}(b).$
\end{corollary}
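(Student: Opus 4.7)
The plan is to reduce the inequality to an elementary Schur-convexity statement for the function $f(x) = \max\{x+1-t,\,0\}$, and then invoke the structural Lemma~\ref{cc} to supply the required majorization.

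Set $s = \deg a$ and $r = \deg b$, and assume $s \geq r$ by symmetry. Since $a$ and $b$ are themselves $c$-chains, their shapes are the singletons $(s)$ and $(r)$, so that $\gamma_{t,c}(a) = f(s)$ and $\gamma_{t,c}(b) = f(r)$. By Lemma~\ref{cc}, the $c$-decomposition of $ab$ has at most two factors; write its shape as $(p, q)$ with $p \geq q$, adopting the convention $q = 0$ in the one-factor case (harmless, since $f(0) = 0$ for $t \geq 1$). The same lemma guarantees $p \geq \max(s,r) = s$, and preservation of total degree gives $p + q = s + r$.

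These two constraints say precisely that the pair $(p, q)$ majorizes $(s, r)$. Since $f$ is convex (it is the positive part of an affine function), Schur-convexity yields
$$\gamma_{t,c}(ab) \;=\; f(p) + f(q) \;\geq\; f(s) + f(r) \;=\; \gamma_{t,c}(a) + \gamma_{t,c}(b),$$
which is the desired inequality. A reader preferring to avoid the general principle can verify the bound directly by splitting on the position of $t$: if $t > p$ both sides vanish; if $q < t \leq p$, then $f(q) = 0$ and the claim collapses to $p \geq s + r + 1 - t$, which follows from $q \leq t-1$ and $p + q = s + r$; if $t \leq q$, all four terms are affine and one gets equality $p + q + 2 - 2t = s + r + 2 - 2t$.

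There is no real obstacle here: the combinatorial content is entirely packaged in Lemma~\ref{cc}, and what remains is a one-step convexity calculation. The only minor care needed is the convention handling the degenerate one-factor case for the shape of $ab$.
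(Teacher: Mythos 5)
Your proof is correct and takes essentially the approach the paper intends: the corollary is stated there as an immediate consequence of Lemma \ref{cc}, namely that the shape $(p,q)$ of $ab$ satisfies $p\geq\max(s,r)$ and $p+q=s+r$, after which the inequality is exactly your convexity/majorization computation for $f(x)=\max\{x+1-t,0\}$. (Only your optional direct case-split is slightly imprecise in the subcase $q<t\leq p$ with $r<t$, where the needed bound is $p\geq s$ rather than $p\geq s+r+1-t$; the main Schur-convexity argument already covers this.)
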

 
We set $$J_t=\big<x_{a_1}\cdots x_{a_t}\; :\; a_1,a_2,\dots,a_t \mbox{ is a c-chain}\big>.$$
We have the following result:
\begin{theorem}
\label{lmsym}
The ideal $J_2$ is differentially perfect. In particular, the symbolic powers of the edge ideals $J_2^{\{r\}}$ are:
$$\big(J_2^{\{r\}}\big)^{(s)}=\big(J_{r+1}\big)^{(s)}=\Big<x^a |\gamma_{r+1,c}(x^a)\geq s\Big>.$$
\end{theorem}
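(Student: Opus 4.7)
The plan is to compute $(J_{r+1})^{(s)}$ directly via its primary decomposition, establish the combinatorial identity between $\gamma_{r+1,c}(x^a)$ and a minimum over chain-free sets, and then derive differential perfectness as a consequence.

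First, since $J_{r+1}$ is a squarefree monomial ideal it is radical, and its minimal primes are the ideals $P_F := (x_i : i \in [n] \setminus F)$ as $F$ ranges over the facets of its Stanley--Reisner complex, i.e.\ the maximal $(r+1)$-chain-free subsets of $[n]$. Hence
$$(J_{r+1})^{(s)} = \bigcap_F P_F^s,$$
and a monomial $x^a$ lies in $P_F^s$ iff $\sum_{i \notin F} a_i \geq s$. Enlarging $F$ only decreases this sum, so the condition is equivalent to $\min_F \sum_{i \notin F} a_i \geq s$ taken over all (not necessarily maximal) $(r+1)$-chain-free sets $F$.

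The heart of the proof is the identity
$$\gamma_{r+1,c}(x^a) \;=\; \min_F \sum_{i \notin F} a_i.$$
For ``$\leq$'': given any $(r+1)$-chain-free $F$, Lemma \ref{lm4} and the perfect graph property imply that $F$ decomposes as a union of $r$ antichains in the $<_c$-order, each contained in an interval of length $c+1$. Since the entries of any $c$-chain $\delta_j$ of the $c$-decomposition of $x^a$ are pairwise at distance $> c$, $\delta_j$ meets each such interval in at most one point, so $|\delta_j \cap F| \leq \min(s_j, r)$. Summing gives
$$\sum_{i \in F} a_i = \sum_j |\delta_j \cap F| \leq \sum_j \min(s_j, r) = |a| - \gamma_{r+1,c}(x^a).$$
For ``$\geq$'' one must exhibit a chain-free $F$ attaining the bound; this is a weighted Greene--Kleitman statement for the $<_c$-poset, provable either by induction on $|a|$ (peeling off the last element of the longest chain $\delta_1$ and handling the residual shape, using perfect-graph $r$-colorability when the residual $\gamma$ drops to $0$) or by invoking LP-integrality of the stable-set polytope of the perfect graph $G$ restricted to $\operatorname{supp}(x^a)$. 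This ``$\geq$'' direction is the main technical obstacle.

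Once the formula is established, the differentially perfect equality
$$(J_{r+1})^{(s)} = J_{r+s} + \sum_{i=1}^{s-1}(J_{r+1})^{(i)}(J_{r+1})^{(s-i)}$$
is a direct verification. The inclusion ``$\supseteq$'' is standard: an $(r+s)$-chain has shape $(r+s)$, hence $\gamma_{r+1,c} = s$, while the product terms are handled by extending the corollary of Lemma \ref{cc} to arbitrary monomials, namely $\gamma_{r+1,c}(x^b x^c) \geq \gamma_{r+1,c}(x^b) + \gamma_{r+1,c}(x^c)$, which is immediate from the chain-free-set characterization via $\min(f+g) \geq \min f + \min g$. For ``$\subseteq$'', take $x^a$ with $\gamma_{r+1,c}(x^a) \geq s$ and shape $s_1 \geq \cdots \geq s_k$. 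If $s_1 \geq r+s$ then $\delta_1 \in J_{r+s}$ divides $x^a$. Otherwise $r+1 \leq s_1 \leq r+s-1$; setting $i := s_1 - r \in \{1, \ldots, s-1\}$ and using that $(\delta_2, \ldots, \delta_k)$ is the $c$-decomposition of $x^a / \delta_1$, one has $\gamma_{r+1,c}(\delta_1) = i$ and $\gamma_{r+1,c}(x^a/\delta_1) = \gamma_{r+1,c}(x^a) - i \geq s-i$, so $x^a = \delta_1 \cdot (x^a/\delta_1) \in (J_{r+1})^{(i)}(J_{r+1})^{(s-i)}$. This completes both claims of the theorem.
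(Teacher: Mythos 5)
Your reduction of the theorem to the identity $\gamma_{r+1,c}(x^a)=\min_F\sum_{i\notin F}a_i$, with $F$ ranging over the $(r+1)$-chain-free sets, is the right frame: your ``$\leq$'' half (perfection of the graph, so $F$ is covered by $r$ independent sets, each lying in an interval of $c+1$ consecutive indices and hence meeting each chain of the $c$-decomposition at most once) is essentially the paper's easy inclusion, and your closing paragraph deriving the differentially perfect identity from the monomial characterization is sound bookkeeping. The problem is the ``$\geq$'' half, which you yourself flag as ``the main technical obstacle'' and then do not prove. Exhibiting a chain-free $F$ with $\sum_{i\notin F}a_i\leq\gamma_{r+1,c}(x^a)$ is exactly the statement that the \emph{canonical greedy} $c$-decomposition (longest chain first) is an optimal, i.e.\ $r$-saturated, chain partition of the multiset for every $r$. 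Neither of your two suggested shortcuts delivers this: a Greene--Kleitman-type duality for the comparability graph of $<_c$ only identifies $\min_F\sum_{i\notin F}a_i$ with the maximum of $\sum_j\max(|C_j|-r,0)$ over \emph{all} partitions into $c$-chains, and says nothing about the specific greedy partition attaining that maximum (greedy longest-chain-first partitions need not be $r$-saturated in an arbitrary poset, so something special about the $<_c$ order must be proved); and integrality of the stable-set polytope of a perfect graph concerns single stable sets, not maximum-weight unions of $r$ stable sets, so it does not apply as stated.

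This missing step is precisely where the paper invests its technical effort: the interval-exchange Lemma \ref{dtt}, which replaces a facet $F_j$ meeting the $c$-socle of $\delta$ in fewer than $r$ points by a facet $F_z$ that still contains the relevant intersection and meets the socle in strictly more points, and Lemma \ref{lmlmsym}, which converts this into the inequality $O_{P_j}(\delta)+r_{Soc_c(\delta)}(j)\geq s+r$ for all $j\in A_r$. Only with that inequality does the induction that peels off the whole first chain $\delta_1$ (not one element at a time, as your sketch suggests) go through: after dividing by $\delta_1$ one must know that $\delta$ lies $s+(r-r(j))$ deep in $P_j$ whenever $\delta_1$ meets $F_j$ in only $r(j)<r$ points, and this is not automatic from $\delta\in P_j^{s}$. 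Until you supply an argument of this kind (or a precise citation of a saturation result for the greedy decomposition in this particular order), your proof of Theorem \ref{lmsym} is incomplete at its central point.
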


We have that the ideal  $J_{r+1}$ is generated by all $c$-chains of length $r+1$ and hence it is a square-free monomial ideal
associated with a simplicial complex that we denote by $\Delta_r$. 
If $j=j_1,\dots,j_r$ is a $c$-chain with $j_r\leq n-c$ then the set
$F_j=\{j_1,j_1+1,\dots,j_1+c,\dots,j_r,j_r+1,\dots,j_r+c\}$ is clearly a facet of $\Delta_r$. Furthermore it is easy to see that any facets  of $\Delta_r$ is of the form $F_j$ for some $c$-chain  $j$ of length $r$ and bounded by $n-c$. Denote by $A_r$   the set of the $c$-chains of length $r$ bounded by $n-c$, and for $j\in A_r$ denote by $P_j$ the ideal $(x_i:i\not\in F_j)$.
We have:  
$$J_{r+1}=\bigcap_{j\in A_r}P_{j}.$$
So 
$\big(J_{r+1}\big)^{(s)}=\bigcap_{j\in A_r}P_{j}^s.$
To prove Theorem \ref{lmsym} we need the following results.
\begin{lemma}
\label{dtt}
Let $I_1,I_2,\dots,I_h,U_1,U_2,\dots,U_k$ $(h>k)$ be closed intervals of length $c$ in $\mathbb{R}$ such that 
$I_{\alpha}\cap I_{\beta}=\emptyset$, $U_{\alpha}\cap U_{\beta}=\emptyset$ ( for all $\alpha\not=\beta$)and $|\{i_1,i_2,\dots,i_h\}\bigcap (\bigcup_1^kU_t)|<k$ where $i_{\alpha}=\min(I_{\alpha})$. Then we can choose other disjoint closed intervals 
$U_1',\dots,U_k'$ in the set of closed intervals $\{I_1,I_2,\dots,I_h,U_1,$ $U_2,\dots,U_k\}$ such that $(\bigcup_1^h I_{\alpha})\bigcap (\bigcup_1^k U_{\beta})\subseteq (\bigcup_1^h I_{\alpha})\bigcap (\bigcup_1^k U_{\beta}')$ and \\
 $|\{i_1,i_2,\dots,i_h\}\cap (\bigcup_1^kU_t)| <|\{i_1,i_2,\dots,i_h\}\cap (\bigcup_1^kU_t')|.$
\end{lemma}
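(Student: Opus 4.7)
The plan is to model the intersection pattern of the two families as a bipartite graph $G$ on $\mathcal{I}:=\{I_1,\dots,I_h\}$ and $\mathcal{U}:=\{U_1,\dots,U_k\}$ (edges between intersecting intervals), and then to obtain $\mathcal{U}':=\{U_1',\dots,U_k'\}$ via a balanced block swap: replace some $\mathcal{U}_0\subseteq\mathcal{U}$ by some $\mathcal{I}_0\subseteq\mathcal{I}$ with $|\mathcal{I}_0|=|\mathcal{U}_0|$. I would first extract a structural picture of $G$ from the hypotheses. Since all intervals have length $c$ and the two families are internally disjoint, every $U_\beta$ meets at most two $I_\alpha$'s (three left endpoints would lie in a length-$2c$ interval at pairwise distance $>c$), and a \emph{free} $I_\alpha$ (one with $i_\alpha\not\in\bigcup_\beta U_\beta$) meets at most one $U_\beta$ and lies to its left. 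Moreover the leftmost vertex in any putative cycle would require two opposite-family neighbours inside a length-$c$ interval, violating family disjointness; thus $G$ is acyclic of maximum degree $\le 2$, so its connected components (``clumps'') are paths. Sorting each clump by left endpoint respects its path order, and a direct check on consecutive pairs $U_j,I_{j+1}$ shows that $U_j$ always captures its right $I$-neighbour. Classifying a clump $C$ by the families at its two ends yields four shapes: Type A ($I\cdots I$, $p_C-q_C=1$), Type $B_1$ ($I\cdots U$, $p_C=q_C$), Type $B_2$ ($U\cdots I$, $p_C=q_C$), Type C ($U\cdots U$, $p_C-q_C=-1$), with free-$I$ count $f_C:=p_C-r_C$ equal to $1,1,0,0$ respectively; isolated-$I$ and isolated-$U$ singletons are degenerate Type A and Type C.

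Writing $n_A,n_{B_1},n_{B_2},n_C$ for the clump counts, the hypothesis $h>k$ rewrites as $n_A-n_C=h-k>0$, and the capture hypothesis $\sum_C r_C<k$ rewrites as $\sum_C f_C=n_A+n_{B_1}>n_A-n_C$, forcing $n_{B_1}+n_C\ge 1$. I split into two cases. If $n_C\ge 1$, pick any Type A clump $C_A$ and any Type C clump $C_C$ and let $\mathcal{I}_0$ (resp.\ $\mathcal{U}_0$) be all $I$'s (resp.\ $U$'s) of $C_A\cup C_C$; the sizes match because $p_A+p_C=(q_A+1)+(q_C-1)=q_A+q_C$. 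Otherwise $n_C=0$ and $n_{B_1}\ge 1$; pick any Type $B_1$ clump and let $\mathcal{I}_0,\mathcal{U}_0$ be all its $I$'s, resp.\ $U$'s (sizes trivially equal). Set $\mathcal{U}':=(\mathcal{U}\setminus\mathcal{U}_0)\cup\mathcal{I}_0$.

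All three required conclusions follow from the clump structure. Disjointness: every $I\in\mathcal{I}_0$ lives in a clump whose $U$'s are entirely in $\mathcal{U}_0$, so $I$ meets no surviving $U$. Coverage: for each $U\in\mathcal{U}_0$, any $I$ meeting $U$ shares its clump and therefore lies in $\mathcal{I}_0$, giving $U\cap\bigcup\mathcal{I}\subseteq\bigcup\mathcal{I}_0\subseteq\bigcup\mathcal{U}'$. Capture-count gain: untouched clumps contribute unchanged, while the chosen clumps' capture count rises from $\sum r_C$ to $\sum p_C$ (every $I\in\mathcal{I}_0$ now contributes its $i_\alpha$), a net gain of $\sum f_C$ which equals $1$ in both cases. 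The main obstacle in this plan is the structural analysis of $G$---in particular the acyclicity argument and the correct combinatorial tally of $f_C$ for each clump shape, including the isolated-interval degeneracies; once that picture is established, the construction of the swap and the verification of the three conditions are essentially routine.
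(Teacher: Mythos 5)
Your proposal is correct, but it takes a genuinely different route from the paper. The paper proves the lemma by induction on $k$: the case $k=1$ is settled by hand, and the inductive step is a case analysis according to whether some $U_\beta$ misses $\bigcup_\alpha I_\alpha$ and whether the capture count of the remaining $U$'s is below $k-1$, the residual configurations being handled by locally replacing a run of $U$'s with a run of consecutive $I$'s located through a ``zigzag intersection''; that argument is only sketched and is delicate to verify. You instead argue globally and without induction: equal lengths plus internal disjointness of each family force the bipartite intersection graph to have maximum degree $2$ and no cycles, so its components are paths monotone in left endpoints; the component classification together with the identities $h-k=n_A-n_C$ and $\sum_C f_C>n_A-n_C$ yields a single block swap whose capture gain is exactly $1$, and the three required properties (disjointness, coverage, strict gain) follow cleanly from the fact that any interval capturing or meeting a swapped interval lies in the same component. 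This buys a more transparent and quantitatively sharp proof at the cost of the structural analysis you flag. One degenerate case you should spell out: nothing forbids some $I_\alpha$ from coinciding with some $U_\beta$ as a set, and such a pair is forced (by your own tie argument: adjacent equal-length intervals with equal left endpoints are equal, and any further neighbour would violate same-family disjointness) to be a two-element component whose $I$ is captured, so its free count is $0$, not $1$. This perturbs your table only for ``Type $B_1$''; the fix is to classify coincident pairs as Type $B_2$, or equivalently, in the case $n_C=0$, to choose a $B_1$ component whose leftmost $I$ is genuinely free, which the inequality $\sum_C f_C>n_A$ guarantees. With that convention every step of your argument goes through unchanged.
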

\begin{proof} Set $j_{\beta}=\min(U_{\beta})$ and $I_{h+1}=U_{k+1}=U_0=\emptyset$. We can assume that $i_t<_ci_{t+1}$ and $j_t<_cj_{t+1}$. We will prove the lemma by induction on $k$.\\

If $k=1$, we have 
\begin{equation}
\label{abc}
\{i_1,i_2,\dots,i_h\}\bigcap U_1=\emptyset.
\end{equation}
- If $(\bigcup_1^h I_{\alpha})\bigcap U_1=\emptyset $, we choose $U_1'=I_1$.\\
- If there exists $\alpha \in [h]$ such that $U\cap I_{\alpha}\not=\emptyset$, by (\ref{abc}) we have $U\cap I_{\alpha+1}=\emptyset$ and $i_{\alpha}<j_1$. Thus, we choose $U_1'=I_{\alpha}.$

Assume that the clause is true for all $i=1,\dots,k-1$. We have the following possible cases:\\
- There exists $\beta\in [k]$ such that $U_{\beta}\bigcap (\bigcup_1^h I_{\alpha})=\emptyset$ and 
   $$|\{i_1,i_2,\dots,i_h\}\bigcap (\bigcup_{t\not=\beta}U_t)|<k-1.$$ By induction, we can choose  $U_1',\dots,U_{\beta-1}',$ $U_{\beta+1}',\dots,U_k'$ in\\ $\{I_1,,\dots,I_h,U_1,\dots,U_{\beta-1},U_{\beta+1},\dots,U_k\}$  such that $$(\bigcup_1^h I_{\alpha})\bigcap (\bigcup_{t\not=\beta} U_t)\subseteq (\bigcup_1^h I_{\alpha})\bigcap (\bigcup_{t\not=\beta} U_t')$$ and $|\{i_1,i_2,\dots,i_h\}\cap (\bigcup_{t\not=\beta}U_t)| <|\{i_1,i_2,\dots,i_h\}\cap (\bigcup_{t\not=\beta} U_t')|.$ We have that  $U_1',\dots,U_{\beta-1}',U_{\beta},U_{\beta+1}',\dots,U_k'$ satisfies the condition.\\
- There exists $\beta\in [k]$ such that $U_{\beta}\bigcap (\bigcup_1^h I_{\alpha})=\emptyset$ and
   $|\{i_1,i_2,\dots,i_h\}\bigcap (\bigcup_{t\not=\beta}U_t)|$ $=k-1$. Assume that $i_{t_p}\in U_p$ for all $j=1,\dots,\beta-1,\beta+1,\dots,k$. So $t_p<t_{p+1}$ and $j_p\leq i_{t_p}.$ Because $h>k$, there exists an index $\alpha$ in set  $\{i_1,\dots,i_h\}-\{t_1,\dots,t_k\}$. If $I_{\alpha}\cap \bigcup_{t\not=\beta} U_t=\emptyset$, we choose $U_1,\dots,U_{\beta-1},U_{\beta+1},\dots,U_k, I_{\alpha}$. Otherwise, there exists a pair $(q,\epsilon)$ such that $I_{\alpha}\cap U_q\not=\emptyset,I_{\alpha+1}\cap U_{q+1}\not=\emptyset,\dots,I_{\alpha+\epsilon}\cap U_{q+\epsilon}=\emptyset$. We choose disjoint closed intervals $I_{\alpha},I_{\alpha+1},\dots,I_{\alpha+\epsilon}$ to replace $U_{\beta},U_q,U_{q+1},\dots,U_{q+\epsilon-1}$ in $U_1,U_2,\dots,U_k.$  
- We have $U_{\beta}\bigcap (\bigcup_1^h I_{\alpha})\not=\emptyset$ for all $\beta=1,\dots,k$. Because $h>k$ and $|\{i_1,i_2,\dots,i_h\}\bigcap (\bigcup_1^kU_t)|<k$, there exists a \textit{zigzag intersection}, i.e. there exists a triangle set $(\alpha,\epsilon,q)$ such that $I_{\alpha}\cap U_{q-1}=\emptyset$, $I_{\alpha}\cap U_q\not=\emptyset$, $I_{\alpha+1}\cap U_q\not=\emptyset$, \dots, $I_{\alpha+\epsilon}\cap U_{q+\epsilon}\not=\emptyset$ and $I_{\alpha+\epsilon+1}\cap U_{q+\epsilon}=\emptyset$. We choose disjoint closed intervals $I_{\alpha},I_{\alpha+1},\dots,I_{\alpha+\epsilon}$ to replace $U_q,U_{q+1},\dots,U_{q+\epsilon}$ in $U_1,U_2,\dots,U_k.$ 
\end{proof}
\begin{definition}
Let $\delta$ be a monomial and $P$ an ideal. We define the function $$O_P(\delta)=\max\{k: \delta\in P^k\}.$$
If monomial $\delta_1=x_{i_1}\cdots x_{i_s}$ is the maximum of the $c$-chains which divide $\delta$ then the $c$-chain $i=i_1,\dots,i_s$ is called the \textit{c-socle} of $\delta$, denoted by $Soc_c(\delta).$ We set $supp(\delta)=\{i:\; x_i|\delta\}$. By the maximality of $\delta_1$, we have $supp(\delta)\subseteq F_{Soc_c(\delta)}=\{i_1,i_1+1,\dots,i_1+c,i_2,i_2+1,\dots,i_2+c,\dots,i_s,i_s+1,\dots,i_s+c\}.$\\
Let $i=i_1,\dots,i_h$ and $j=j_1,\dots,j_k$ with $h>k$. We define $$r_i(j)=|\{i_1,\dots,i_h\}\cap F_j|.$$
\end{definition}
\begin{lemma}
\label{lmlmsym}
Let $\delta$ be a monomial in $\cap_{j\in A_r}P_{j}^s$. Then 
\begin{equation}
\label{pr1}
 O_{P_j}(\delta)+r_{Soc_c(\delta)}(j) \geq s+r 
\end{equation}

for all $j\in A_r.$
\end{lemma}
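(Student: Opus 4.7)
The plan is to prove the inequality $O_{P_j}(\delta)+r_{Soc_c(\delta)}(j)\geq s+r$ by induction on the deficit $r-t$, where $t:=r_{Soc_c(\delta)}(j)$. A preliminary observation is crucial: since $\delta\in\bigcap_{j'\in A_r}P_{j'}^{s}\subseteq\bigcap_{j'\in A_r}P_{j'}=J_{r+1}$, the monomial $\delta$ is divisible by some $c$-chain of length $r+1$, and because the degree-lexicographic order privileges higher degrees, the $c$-socle must be a $c$-chain of maximal length, so $h:=|Soc_c(\delta)|\geq r+1$. This is exactly the hypothesis $h>r$ needed to invoke Lemma~\ref{dtt}.

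The base case $t=r$ is immediate: the desired inequality collapses to $O_{P_j}(\delta)\geq s$, which holds by the assumption $\delta\in P_j^s$.

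For the inductive step assume $t<r$ and set $I_\alpha=[i_\alpha,i_\alpha+c]$, $U_\beta=[j_\beta,j_\beta+c]$, $F_\sigma:=\bigcup_\alpha I_\alpha$. Lemma~\ref{dtt} applies (its numerical assumption is precisely $t<r$) and produces disjoint length-$c$ intervals $U_1',\dots,U_r'$ drawn from $\{I_\alpha\}\cup\{U_\beta\}$ such that $F_\sigma\cap F_j\subseteq F_\sigma\cap F_{j'}$ and with strictly larger socle intersection. Writing $j'$ for the sorted sequence of $\min(U_\beta')$, one verifies $j'\in A_r$: its entries come from $\{i_1,\dots,i_h\}\cup\{j_1,\dots,j_r\}$, and $j_\beta\leq n-c$ by assumption while $i_\alpha<i_{\alpha+1}-c\leq n-c$ for $\alpha<h$; the potentially problematic case $i_h>n-c$ can be avoided because $h-1\geq r$ admissible $I$-intervals still remain to draw from. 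Because $supp(\delta)\subseteq F_\sigma$, the set inclusion immediately yields $N_{j'}(\delta)\geq N_j(\delta)$, equivalently $O_{P_j}(\delta)\geq O_{P_{j'}}(\delta)$.

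The core difficulty---and the main obstacle of the proof---is to upgrade this weak comparison to the quantitative bound
\[
O_{P_j}(\delta)-O_{P_{j'}}(\delta)\;\geq\;r_{Soc_c(\delta)}(j')-t,
\]
which, combined with the inductive hypothesis $O_{P_{j'}}(\delta)+r_{Soc_c(\delta)}(j')\geq s+r$ applied to $j'\in A_r$, gives $O_{P_j}(\delta)+t\geq s+r$ as required. I would establish this sharper inequality by inspecting the explicit interval swaps in the proof of Lemma~\ref{dtt}: in every situation (single swap, coverage, and zigzag), each newly introduced socle endpoint $i_{\alpha+k}$ comes with the variable $x_{i_{\alpha+k}}$ contributing at least one unit of $\delta$-mass to $F_{j'}\setminus F_j$, while each displaced $U_{q+k}$ is either disjoint from $F_\sigma$ (hence contributes nothing to $N_j$) or has its $\delta$-mass already absorbed by surviving intervals; a case-by-case bookkeeping produces a one-to-one matching between socle increments and net mass gain, closing the induction.
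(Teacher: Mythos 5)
Your proof follows the paper's argument step for step: decreasing induction on $t=r_{Soc_c(\delta)}(j)$ (your induction on the deficit $r-t$ is the same thing), the trivial base case $t=r$, and an application of Lemma~\ref{dtt} to the socle intervals versus the intervals of $F_j$ producing $j'\in A_r$ with $F_\sigma\cap F_j\subseteq F_\sigma\cap F_{j'}$ and strictly larger socle coverage; the paper compresses everything after that into the phrase ``by straightforward computations''. The one place where you stop short --- the bound $O_{P_j}(\delta)-O_{P_{j'}}(\delta)\geq r_{Soc_c(\delta)}(j')-t$, which you propose to obtain by re-running the case analysis inside the proof of Lemma~\ref{dtt} --- does not require opening that proof at all: writing $N_z(\delta)$ for the total multiplicity of $\delta$ supported in $F_z$, one has $O_{P_z}(\delta)=\deg(\delta)-N_z(\delta)$; since $supp(\delta)\subseteq F_\sigma$, the containment $F_\sigma\cap F_j\subseteq F_\sigma\cap F_{j'}$ guarantees that every unit counted by $N_j(\delta)$ is also counted by $N_{j'}(\delta)$, and each of the $r_{Soc_c(\delta)}(j')-t$ socle indices lying in $F_{j'}\setminus F_j$ contributes at least one further unit to $N_{j'}(\delta)$ because the socle divides $\delta$; hence $N_{j'}(\delta)-N_j(\delta)\geq r_{Soc_c(\delta)}(j')-t$, which is exactly the inequality you need, and your induction closes. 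So your plan is correct and coincides with the paper's; its only soft spots are that this last step is left as a promissory ``case-by-case bookkeeping'' (even though the needed ingredients already appear in your own text), and that your dismissal of the boundary case $i_h>n-c$ when checking $j'\in A_r$ is asserted rather than argued --- a detail the paper itself passes over in silence.
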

\begin{proof}
In this proof, we denote $r(j)=r_{Soc_c(\delta)}(j)$ for simplicity. We use decreasing induction on $r(j)$. In general we have
$r(j)\leq r$.  If $r(j)=r$ then (\ref{pr1}) is trivially true because $O_{P_j}(\delta)\geq s$ since $\delta$ is a monomial in $\cap_{j\in A_r}P_{j}^s$. Assume that $r(j)<r$. Set $G=\bigcup_{i=1}^sG_t$ where $ G_t=\{i_t,i_t+1,\dots,i_t+c\}$. By Lemma \ref{dtt}, there exists $z\in A_r$ such that $F_j\cap G\subset F_{z}$ and $r(z)>r(j)$. By straightforward computations we obtain that $O_{P_j}(\delta)+r(j)\geq O_{P_{z}}(\delta)+r(z)$. Moreover $r(z)>r(j)$, so (\ref{pr1}) follows by induction.  
\end{proof}
 
\begin{proof}[Proof of Theorem \ref{lmsym}] 
We need to prove that $$\bigcap_{j\in A_r}P_{j}^s=\Big<x^a |\gamma_{t,c}(x^a)\geq s\Big>.$$
Let $\delta$ be a monomial and $\delta_1\cdots \delta_p$ be a c-decomposition of $\delta$. Denote by $s_i$ the size of $\delta_i$. Each facet of $\Delta_r$ contains at most $r$ points of the support of $\delta_i$.  It follows that $\delta_i\in \cap_{j\in A_r}P_{j}^{\gamma_r(\delta_i)}$ and thus $\delta \in \cap_{j\in A_r}P_{j}^{\gamma_r(\delta)}$. By the definition of $\gamma_{t,c}(x^a)$ we have $\Big<x^a |\gamma_{t,c}(x^a)\geq s\Big>\subseteq \cap_{j\in A_r}P_{j}^s$. \\

Conversely, if $s_1\leq r$ then there exists $j\in A_r$ such that $supp(\delta)\subseteq \{i_1,i_1+1,\dots,i_1+c,\dots,i_s,i_s+1\dots,i_s+c\}\subset F_j$, and since $\delta\in P_j^s$, it follows that $s=0$ which is a trivial case. So we may assume that
$s_1\geq r+1$. Let $\eta=\delta/\delta_1$. We have $\gamma_{t,c}(\delta)=\gamma_{t,c}(\eta)+\gamma_{t,c}(\delta_1)=\gamma_{t,c}(\eta)+s_1-r$. 
By induction it suffices to show that $$\qquad \eta\in \cap_{j\in A_r}P_{j}^{s-s_1+r}$$
for all $j\in A_r$.\\
This means that: 
\begin{equation}
\label{pr2}
 O_{P_j}(\eta)\geq s-s_1+r.
\end{equation}
However, one has 
$$O_{P_j}(\eta)=O_{P_j}(\delta)-|\{h: i_h\not\in F_j\}|=O_{P_j}(\delta)-s_1+|\{h: i_h\in F_j\}|.$$
So (\ref{pr2}) is equivalent to   
$$    O_{P_j}(\delta)+r(j) \geq s+r $$
 for all $j\in A_r$, by Lemma \ref{lmlmsym}. 
\end{proof}
\begin{theorem}
\label{symbpow}
 For all $t=1,\dots,m$ and $s\in \mathbb{N}$ one has  
$$I_t^{(s)}=\sum I_t^{a_t}I_{t+1}^{a_{t+1}}\cdots I_m^{a_m}$$
  the sum  being extended  over all the sequences of non-negative integers
$a_t,$ $a_{t+1},$ $\dots,$ $a_m,$ with $a_t+2a_{t+1}+\cdots+(m-t+1)a_m=s$.
\end{theorem}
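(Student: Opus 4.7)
The plan is to reduce the theorem to the differential perfection of $I_2$ and then invoke Theorem \ref{thm5}. The starting observation, from Corollary \ref{co1thm6}, is that the determinantal ideals themselves are secant ideals of $I_2$: namely $I_t=I_2^{\{t-1\}}$, and in particular $I_t^{(s)}=(I_2^{\{t-1\}})^{(s)}$.

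To compute the right-hand side, I will invoke the equivalent form of differential perfection recorded after the definition, which expresses $(I^{\{r\}})^{(s)}$ as $\sum_{\lambda \vdash s}\prod_j I^{\{r+\lambda_j-1\}}$. Setting $I=I_2$, $r=t-1$, and using again that $I_2^{\{t+\lambda_j-2\}}=I_{t+\lambda_j-1}$, this yields
$$I_t^{(s)} \;=\; \sum_{\lambda \vdash s}\ \prod_j I_{t+\lambda_j-1}.$$
Grouping the parts of each partition by size then converts the sum into the claimed one: letting $a_i$ count the parts of $\lambda$ equal to $i-t+1$, the condition $|\lambda|=s$ becomes $a_t+2a_{t+1}+\cdots+(m-t+1)a_m=s$. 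Parts of size strictly greater than $m-t+1$ would contribute factors $I_j$ with $j>m$, which vanish since no such minors exist in $X$ ($m=\lfloor(n+c)/(c+1)\rfloor$ is the maximum possible size), so such partitions may be discarded without loss.

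It remains only to establish that $I_2$ is itself differentially perfect, and the natural route is Theorem \ref{thm5} applied with the degree lexicographic order $<$. Its three hypotheses are all in place: $<$ is delightful for $I_2$ by Theorem \ref{thm6}; the initial ideal $\ini(I_2)$, which by Corollary \ref{lmgb} coincides with the squarefree monomial ideal $J_2$, is automatically radical; and $J_2$ is differentially perfect by Theorem \ref{lmsym}. Hence Theorem \ref{thm5} supplies the differential perfection of $I_2$, and this closes the proof.

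There is essentially no obstacle left in this last step; all the substantive work has already been carried out in the previous sections, in particular the perfect-graph argument behind Theorem \ref{thm6} and the combinatorial analysis culminating in Theorem \ref{lmsym}. The only point requiring careful bookkeeping is the index shift between ``secant power $r$'' and ``minor size $r+1$'' throughout the computation, together with the verification that partitions with a part exceeding $m-t+1$ may be discarded.
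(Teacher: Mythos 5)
Your proposal is correct and follows essentially the same route as the paper: both establish that $I_2$ is differentially perfect by feeding Theorem \ref{thm6} (delightfulness) and Theorem \ref{lmsym} (the squarefree initial ideal $J_2$ is differentially perfect) into Theorem \ref{thm5}, and then translate differential perfection, via $I_2^{\{r\}}=I_{r+1}$ from Corollary \ref{co1thm6}, into the stated sum. The only cosmetic difference is that you unwind the definition using the partition-indexed equivalent form (with the easy observation that parts giving $I_j$ with $j>m$ vanish), whereas the paper uses the recursive identity and induction on $s$; these are interchangeable since the paper itself records their equivalence.
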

\begin{proof}
Set $r=t-1$. We have $in(I_t)=J_{r+1}$. By Corollary \ref{co1thm6} and Theorem \ref{lmsym}, the ideal $in(I_2^{\{r\}})$ is differentially perfect. However Theorem \ref{thm6} implies that any diagonal order $\prec$ is delightful for $I_2$. Thus, by Theorem \ref{thm5}, $I_2$ is differentially perfect. This means
$$\big(I_2^{\{r\}}\big)^{(s)}=I_2^{\{r+s-1\}}+\sum_{i=1}^{s-1}\big(I_2^{\{r\}}\big)^{(i)}\big(I_2^{\{r\}}\big)^{(s-i)}$$
or
$$\big(I_t\big)^{(s)}=I_{t+s-1}+\sum_{i=1}^{s-1}\big(I_t\big)^{(i)}\big(I_t\big)^{(s-i)}.$$
So by induction on $s$, we have
$$I_t^{(s)}=\sum I_t^{a_t}I_{t+1}^{a_{t+1}}\cdots I_m^{a_m},$$
  the sum  being extended  over all the sequences of non-negative integers
$a_t,$ $a_{t+1},$ $\dots,$ $a_m,$ with $a_t+2a_{t+1}+\cdots+(m-t+1)a_m=s$.

\end{proof}
\begin{corollary} If $g\in I_t^{(s)}$ then $\gamma_{t,c}(\ini(g))\geq s.$ 
\end{corollary}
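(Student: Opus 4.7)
The plan is to reduce the statement to a monomial question by bounding $\ini(I_t^{(s)})$ inside the ideal $J_t^{(s)}$, which Theorem~\ref{lmsym} describes explicitly in terms of $\gamma_{t,c}$.

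First I would note that $\ini(I_t)=J_t$: by Lemma~\ref{Gb1} and Corollary~\ref{co1thm6} the $t$-minors form a Gr\"obner basis of $I_t$, and their leading terms are exactly the $c$-chains of length $t$. Since $I_t=I_2^{\{t-1\}}$ is geometrically prime (cf.\ the remarks preceding Lemma~\ref{Gb1}), hence radical, the join formula $I^{(s)}=I*\mathfrak{m}^s$ recalled in Section~2 from \cite[Proposition~2.8]{S} applies and gives $I_t^{(s)}=I_t*\mathfrak{m}^s$. Applying Theorem~\ref{thm1} to this join, and using that the monomial ideal $\mathfrak{m}^s$ equals its own initial ideal, yields
$$\ini(I_t^{(s)})\,\subseteq\,\ini(I_t)*\ini(\mathfrak{m}^s)\,=\,J_t*\mathfrak{m}^s\,=\,J_t^{(s)},$$
where the last equality comes from applying the same join formula to the radical squarefree monomial ideal $J_t$.

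Next I would invoke Theorem~\ref{lmsym}: its proof establishes not only that $J_t^{(s)}$ is generated by monomials $x^a$ with $\gamma_{t,c}(x^a)\geq s$, but in fact that a monomial $\mu$ lies in $\bigcap_{j\in A_{t-1}}P_j^s=J_t^{(s)}$ precisely when $\gamma_{t,c}(\mu)\geq s$ (the induction argument in the second half of that proof yields this equivalence for every monomial, not just the minimal generators). Since $g\in I_t^{(s)}$ forces $\ini(g)\in\ini(I_t^{(s)})\subseteq J_t^{(s)}$, the conclusion $\gamma_{t,c}(\ini(g))\geq s$ follows at once.

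The only delicate point is the bookkeeping around the join formula, which is applied to both $I_t$ and $J_t$. An alternative route that avoids this double appeal is to use Theorem~\ref{thm5} directly: it provides an explicit Gr\"obner basis of $I_t^{(s)}$ consisting of products of minors, whose leading monomials are products of $c$-chains whose total $\gamma_{t,c}$-value is at least $s$ by iterated application of the corollary following Lemma~\ref{cc}; one then reads off $\ini(I_t^{(s)})\subseteq J_t^{(s)}$ directly from the generators of the initial ideal.
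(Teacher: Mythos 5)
Your main argument is correct, and it takes a genuinely different route from the paper's. The paper proves the corollary in one stroke from what it has just established: by Theorems \ref{thm5} and \ref{symbpow} the products of minors form a Gr\"obner basis of $I_t^{(s)}$, so $\ini(g)$ factors as $\delta_1\cdots\delta_p\cdot g'$ with the $\delta_i$ $c$-chains satisfying $\sum_i\gamma_{t,c}(\delta_i)\geq s$, and the superadditivity corollary of Lemma \ref{cc} then yields $\gamma_{t,c}(\ini(g))\geq s$. You instead bound the whole initial ideal: $I_t^{(s)}=I_t*\mathfrak{m}^s$ (the join formula, legitimate since $I_t=I_2^{\{t-1\}}$ is geometrically prime, hence radical), the Simis--Ulrich containment of Theorem \ref{thm1} gives $\ini(I_t^{(s)})\subseteq \ini(I_t)*\mathfrak{m}^s=J_t*\mathfrak{m}^s=J_t^{(s)}$, and you finish with the monomial-level characterization of $\bigcap_{j\in A_{t-1}}P_j^s$ by the condition $\gamma_{t,c}\geq s$, which you correctly extract from the induction in the second half of the proof of Theorem \ref{lmsym} (the bare statement of that theorem only gives generators, so your remark that the proof gives the equivalence for every monomial is the right thing to point out). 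What your route buys: it bypasses Lemma \ref{cc} entirely, and in particular it sidesteps the point the paper leaves implicit, namely that $\gamma_{t,c}$ cannot drop when the product $\delta_1\cdots\delta_p$ is multiplied by the leftover factor $g'$; membership in the ideal $J_t^{(s)}$ absorbs that automatically. The cost is a second appeal to the join formula (for the squarefree ideal $J_t$) and to the algebraically-closed-field conventions behind $I^{(s)}=I*\mathfrak{m}^s$, but these are exactly the conventions the paper adopts in Section 2, so this is not a gap. Your sketched alternative at the end is essentially the paper's own argument (Gr\"obner basis from Theorem \ref{thm5} plus the corollary of Lemma \ref{cc}), again routed through $J_t^{(s)}$ at the last step.
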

\begin{proof} Since $g\in I_t^{(s)}$, we have $\ini(g)=\delta_1\delta_2\cdots\delta_p.g'$, where $\delta_i$ is a $c$-chain and $\sum_i\gamma_{t,c}(\delta_i)\geq s$. By Lemma \ref{cc}, we have $\gamma_{t,c}(\ini(g))\geq s.$ 
\end{proof}

We have  a bijective correspondence between the sets:
$$
\phi:\{ c-\mbox{chains of } \mathbb{K}[x]\} \to   \{\mbox{maximal
minors of } X\}
$$
defined by setting $\phi(x_{a_1}\cdots x_{a_s})=M(a_1,a_2,\dots,a_s)$. The inverse   of $\phi$ is  the map which takes every maximal minor to its initial monomial. Now $\phi$ induces a map 
 $$
\Phi:\{ \mbox{ordinary monomials of  } \mathbb{K}[x]\} \to   \{\mbox{products of maximal
minors of } X\},
$$ 
which is defined by $\Phi(\delta)=\phi(\delta_1)\phi(\delta_2)\cdots \phi(\delta_k)$ where 
$\delta=\delta_1\delta_2\cdots \delta_k$ is the $c$-decomposition of $\delta$. Note that by construction one has $\ini(\Phi(\delta))=\delta$ and hence $\Phi$ is injective. We now define the set of the {\em standard monomials} of $X$ to be the image of $\Phi$. So by construction we have a bijective correspondence:
$$
\Phi:\{ \mbox{ordinary monomials of  } \mathbb{K}[x]\} \to   \{\mbox{ standard monomials
of  } X\}
$$
whose inverse is given by the map which takes every standard monomial to its initial monomial, i.e. $\ini(\Phi(\delta))=\delta$ for all ordinary monomials $\delta$ and $ \Phi(\ini(\mu))=\mu$ for all standard monomials $\mu$. 
\begin{remark}
The standard monomials form a $\mathbb{K}$-basis of the polynomial ring $\mathbb{K}[x]$.   
\end{remark}
\begin{example}  
Let $c=2$ and $\delta=x_1^2x_2x_4x_7x_8x_{10}$. The $c$-decomposition
of  $\delta$ is $(x_1x_4x_7x_{10})(x_1x_8)(x_2)$ and the shape is $4,2,1$.
Thus  $\delta$ corresponds to the standard monomial 
$$\mu=M(1,4,7,10)M(1,8)M(2)=
\left(
\begin{array}{cccc}
x_1 & x_2 & x_3 & x_4\\
x_3 & x_4 & x_5 & x_6\\
x_5 & x_6 & x_7 & x_8\\
x_7 & x_8 & x_9 & x_{10}
\end{array}
\right)
\left(
\begin{array}{cc}
x_1 & x_6 \\
x_3 &  x_8\\
\end{array}
\right)
x_2.$$ 
In terms of tableau:
$$\delta=
\begin{array}{cccc}
1 & 4 & 7 & 10\\
1 & 8 \\
2 \\

\end{array}
$$
Obviously $\mu \in I_2^{(4)}\cap I_3^{(2)}$.

\end{example}
Given a product of minors $\delta$  of shape $s=s_1,\dots,s_k$ and $t\in \mathbb{N}$, one defines the function
$$\gamma_t(\delta)=\sum_{i=1}^k\max\{s_i-t+1,0\}.$$
Let $\delta=\delta_1\cdots\delta_u$ be a product of minors such that $\gamma_t(\delta)\geq s$. We can assume that $\delta\in I_1^{a_1}\cdots I_m^{a_m}$ ( $a_i\geq 0$ ). Since $\gamma_t(\delta)\geq s$ we have $a_t+2a_{t+1}+\cdots +(m-t+1)a_m\geq s$. Hence $\delta\in I_t^{(s)}$.\\
By Theorem \ref{lmsym} and Theorem \ref{symbpow} we have following corollaries:
\begin{corollary}
\label{corosymb}
Let $\Delta$ be a product of minors and $\Delta=\sum_{j=1}^p\lambda_j\Delta_j$ be a standard representation of $\Delta$. Then $\gamma_t(\Delta)\leq \gamma_t(\Delta_i)$ for all $t=1\dots,m$. 
\end{corollary}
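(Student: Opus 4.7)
The strategy is to combine the preceding corollary (asserting $\gamma_{t,c}(\ini(g))\geq s$ whenever $g\in I_t^{(s)}$) with the observation made in the paragraph just before the statement, namely that any product of minors $\delta$ with $\gamma_t(\delta)\geq s$ already lies in $I_t^{(s)}$. The bridge between these two facts is the map $\Phi$: by construction $\ini(\Phi(\delta))=\delta$ and the shape of $\Phi(\delta)$ as a product of maximal minors coincides with the shape of the $c$-decomposition of $\delta$, so for every standard monomial $\mu=\Phi(\delta)$ one has $\gamma_t(\mu)=\gamma_{t,c}(\ini(\mu))$, and moreover distinct standard monomials have distinct leading terms.

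I would fix $t$ and set $s=\gamma_t(\Delta)$, so that $\Delta\in I_t^{(s)}$ by the remark above, and then split the standard representation:
\[
g \;=\; \Delta - \!\!\sum_{j\,:\,\gamma_t(\Delta_j)\geq s}\!\! \lambda_j\Delta_j \;=\; \sum_{j\,:\,\gamma_t(\Delta_j)<s}\lambda_j\Delta_j.
\]
Each $\Delta_j$ appearing in the middle expression is a product of maximal minors with $\gamma_t\geq s$, hence lies in $I_t^{(s)}$ by the same remark; together with $\Delta\in I_t^{(s)}$ this forces $g\in I_t^{(s)}$.

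Next I would rule out $g\neq 0$ by contradiction. Since distinct standard monomials have distinct initial monomials, no cancellation occurs and $\ini(g)=\ini(\Delta_{j^\ast})$ for some index $j^\ast$ with $\lambda_{j^\ast}\neq 0$ and $\gamma_t(\Delta_{j^\ast})<s$. The $\Phi$-identity recalled above then gives
\[
\gamma_{t,c}(\ini(g))=\gamma_t(\Delta_{j^\ast})<s,
\]
contradicting the preceding corollary. Hence $g=0$, so every $\Delta_j$ in the standard representation with $\lambda_j\neq 0$ satisfies $\gamma_t(\Delta_j)\geq s=\gamma_t(\Delta)$, as required.

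The only genuinely delicate point is controlling $\ini(g)$: we must know that it really is the leading term of one of the ``offending'' $\Delta_{j^\ast}$, and this relies squarely on injectivity of $\Phi$ (no two standard monomials share a leading monomial, so nothing can cancel). The deep input is the preceding corollary, which in turn packages Theorems \ref{lmsym} and \ref{symbpow}; after that the argument is pure bookkeeping, and I expect no real obstacle beyond carefully invoking the two directions above.
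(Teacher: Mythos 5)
Your proposal is correct and follows essentially the same route as the paper: both form the partial sum $g$ over the terms with $\gamma_t(\Delta_j)<\gamma_t(\Delta)$, deduce $g\in I_t^{(s)}$ from the membership remark, and derive a contradiction from the preceding corollary together with the fact that distinct standard monomials have distinct leading terms (so $\ini(g)$ is the leading term of an offending $\Delta_{j^\ast}$ with $\gamma_{t,c}(\ini(\Delta_{j^\ast}))=\gamma_t(\Delta_{j^\ast})<s$). No gaps beyond what the paper itself leaves implicit.
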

\begin{proof}
Since $\Delta=\sum_{i=1}^p\lambda_i\Delta_i$ is the standard representation of $\Delta$, we can assume that $\ini(\Delta)=\ini(\Delta_1)>\ini(\Delta_2)>\cdots>\ini(\Delta_p).$ Set $h=\gamma_t(\Delta)$, $Q=\{i: i\in\{1,\dots,p\}\ and\ \gamma_t(\Delta_i)<h\}$ and $P=\{1,\dots,p\}-Q$. We need prove that $Q=\emptyset.$\\
If $Q\not=\emptyset$, we have $\gamma_t(\Delta_i)<h$ for all $i\in Q$. This implies $\gamma_{t,c}(\ini\Delta_i)<h$ for all $i\in Q$. Set $g=\sum_{i\in Q}\lambda_i\Delta_i=\Delta-\sum_{j\in P}\lambda_j\Delta_j$. So $g\in I_t^{(h)}$. Hence, $\gamma_{t,c}(\ini (g))\geq h$. But $\ini(g)=\ini(\Delta_{i_0})$ for some $i_0\in Q$. So we obtain a contraction.  
\end{proof}
We say that an ideal $I$ of $\mathbb{K}[x]$ is an {\em ideal of standard monomials} if $I$
has a basis as a $\mathbb{K}$-vector space which consists of standard monomials. 
The class of ideals of standard monomials is obviously closed under sum and
intersection and the fact that distinct standard monomials have distinct initial monomials. So if $I$ is an ideal of standard monomials and $B$ is a standard monomial $\mathbb{K}$-basis of $I$ then $B$ is a Gr\"obner basis of $I$ with respect to $<$. Furthermore, the monomials $\ini(\mu)$ with $\mu\in B$ form a $\mathbb{K}$-basis of $\ini(I)$. \\
Denote by $G_{t,s}$ the set  of the standard monomial $\mu$ which have  all the factors of
size  $\geq t$ and $\gamma_t(\mu)=s.$ By Theorem \ref{symbpow} and Corollary \ref{corosymb}, we have the following corollaries:
\begin{corollary}
\label{coro1}
The ideal $I_t^{(s)}$ is an ideal of standard monomials. In particular, the set of the standard monomials $\mu$ with $\gamma_t(\mu)\geq s$ is a $\mathbb{K}$-basis of $I_t^{(s)}$. Furthermore, $G_{t,s}$ is a Gr\"obner basis of $I_t^{(s)}.$
\end{corollary}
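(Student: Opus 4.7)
The plan is in three steps: (i) every standard monomial $\mu$ with $\gamma_t(\mu)\ge s$ lies in $I_t^{(s)}$; (ii) these span $I_t^{(s)}$, giving a Gr\"obner basis; (iii) the smaller set $G_{t,s}$ still generates the initial ideal.

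For (i), I unpack $\mu=\Phi(\delta_1)\cdots\Phi(\delta_k)$ where $(\delta_1,\ldots,\delta_k)$ is the $c$-decomposition of $\ini(\mu)$ and each $\Phi(\delta_i)$ is a maximal minor of size $s_i=|\delta_i|$, so $\mu\in I_{s_1}\cdots I_{s_k}$; discarding the factors with $s_i<t$ places $\mu$ in some $I_t^{a_t}\cdots I_m^{a_m}$ with $\sum_{j\ge t}(j-t+1)a_j=\gamma_t(\mu)\ge s$, and Theorem \ref{symbpow} then puts this inside $I_t^{(\gamma_t(\mu))}\subseteq I_t^{(s)}$. For (ii), Theorem \ref{symbpow} writes every element of $I_t^{(s)}$ as a $\mathbb{K}$-combination of products of minors of $\gamma_t$-weight $s$; by Corollary \ref{co1lm3}(b) each such product is a $\mathbb{K}$-combination of products of maximal minors of the same shape, and the standard representation (Corollary \ref{corosymb}) of each only involves standard monomials with $\gamma_t\ge s$. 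Linear independence is automatic since standard monomials form a $\mathbb{K}$-basis of $\mathbb{K}[x]$, so $B=\{\mu\text{ standard}:\gamma_t(\mu)\ge s\}$ is a $\mathbb{K}$-basis of $I_t^{(s)}$; in particular $I_t^{(s)}$ is an ideal of standard monomials, and since distinct standard monomials have distinct initial monomials, $B$ is already a Gr\"obner basis with respect to $<$.

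For (iii), it suffices to exhibit, for each standard monomial $\mu$ with $\gamma_t(\mu)\ge s$, some $\nu\in G_{t,s}$ with $\ini(\nu)\mid\ini(\mu)$. Let $\delta=\ini(\mu)$ with $c$-decomposition $(\delta_1,\ldots,\delta_k)$, sizes $s_1\ge\cdots\ge s_k$, and choose the smallest $p$ with $\sum_{i=1}^p(s_i-t+1)\ge s$; then $s_p\ge t$. Set $r_i=s_i$ for $i<p$, $r_p=s+t-1-\sum_{i<p}(s_i-t+1)\in[t,s_p]$, let $\delta_i'$ be the length-$r_i$ prefix of $\delta_i$, and put $\delta'=\delta_1'\cdots\delta_p'$; then $\delta'\mid\delta$ and $\sum(r_i-t+1)=s$. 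The main obstacle is to verify that the canonical $c$-decomposition of $\delta'$ is exactly $(\delta_1,\ldots,\delta_{p-1},\delta_p')$: the risk is that too aggressive a truncation can force the greedy algorithm to merge pieces across different $\delta_i$'s and leave a short trailing factor of size $<t$ (for example, with $c=2$, replacing $\delta_1=x_1x_4x_7$ and $\delta_2=x_5x_8$ by their prefixes $x_1x_4$ and $x_5x_8$ yields the canonical decomposition $(x_1x_4x_8,x_5)$). The present choice forestalls this, because every $c$-chain of $\delta'$ is a $c$-chain of $\delta$, and the full $\delta_1,\ldots,\delta_{p-1}$ survive in $\delta'$: the greedy step on $\delta'$ picks $\delta_1$ first (it is still the $<$-maximum $c$-chain, being maximum in the larger $\delta$), then inductively $\delta_2,\ldots,\delta_{p-1}$ by the same argument applied to $\delta'/(\delta_1\cdots\delta_{i-1})\subseteq\delta/(\delta_1\cdots\delta_{i-1})$, and finally the $c$-chain $\delta_p'$. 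Hence $\nu:=\Phi(\delta')$ has all factors of size $\ge t$, satisfies $\gamma_t(\nu)=s$, and $\ini(\nu)=\delta'\mid\ini(\mu)$, completing the proof.
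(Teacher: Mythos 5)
Your argument is correct and is essentially the derivation the paper intends: parts (i) and (ii) are exactly the route "Theorem \ref{symbpow} plus Corollary \ref{corosymb} plus the remark that a standard-monomial $\mathbb{K}$-basis is automatically a Gr\"obner basis," which is all the paper offers for this corollary. Your step (iii) supplies a detail the paper leaves implicit — that every $\ini(\mu)$ with $\gamma_t(\mu)\ge s$ is divisible by $\ini(\nu)$ for some $\nu\in G_{t,s}$ — and you handle the one genuine subtlety there correctly: truncating only the last needed $c$-chain (minimal $p$, prefix of $\delta_p$) guarantees the canonical $c$-decomposition of $\delta'$ is $(\delta_1,\dots,\delta_{p-1},\delta_p')$, so all factors stay of size at least $t$ and $\gamma_t(\delta')=s$.
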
 
\begin{corollary} 
\label{coro2}
The ideal $I_t$ has primary powers if and only if $t=1$ or $t=m$. 
\end{corollary}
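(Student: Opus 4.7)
The plan is to prove the equivalence by handling the two implications separately. The sufficiency falls out essentially for free from Theorem~\ref{symbpow}, while for the necessity I will exhibit a concrete obstruction at $s=2$.

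\textbf{Sufficiency.} Suppose first that $t=m$. In the formula of Theorem~\ref{symbpow}, the only index left is $a_m$, and the constraint $(m-t+1)a_m = a_m = s$ forces $a_m=s$. Hence $I_m^{(s)} = I_m^s$, and since $I_m^{(s)}$ is by construction $I_m$-primary, so is $I_m^s$ for every $s$. Suppose next that $t=1$. Then $X_1$ is the single row $(x_1,\dots,x_n)$, so $I_1$ is the maximal homogeneous ideal $\mathfrak{m}$; powers of a maximal ideal are always primary.

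\textbf{Necessity.} Assume $1<t<m$; I claim $I_t^2$ is already not $I_t$-primary. By Theorem~\ref{symbpow}, the sequences $(a_t,a_{t+1},\dots,a_m)$ of non-negative integers with $a_t+2a_{t+1}+\cdots+(m-t+1)a_m=2$ are exactly $(2,0,\dots,0)$ and $(0,1,0,\dots,0)$. The second sequence is admissible because the assumption $t<m$ guarantees $t+1\leq m$, so that $I_{t+1}$ is a well-defined nonzero ideal. Consequently
$$I_t^{(2)} = I_t^2 + I_{t+1}.$$
It therefore suffices to show that $I_{t+1}\not\subseteq I_t^2$, since then $I_t^2\subsetneq I_t^{(2)}$, so $I_t$ is a proper associated prime of $I_t^2$ and $I_t^2$ fails to be $I_t$-primary.

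The inclusion $I_{t+1}\not\subseteq I_t^2$ is a pure degree count. The ideal $I_{t+1}$ contains the nonzero maximal minor $M(1,2+c,3+2c,\dots,t+1+tc)$, a homogeneous polynomial of degree $t+1$. On the other hand $I_t\subseteq \mathfrak{m}^t$ implies $I_t^2\subseteq \mathfrak{m}^{2t}$, so every nonzero element of $I_t^2$ has degree at least $2t$. Because $t>1$ gives $2t>t+1$, no nonzero $(t+1)$-minor can lie in $I_t^2$, which completes the argument. The only point that requires care in this plan is ensuring that the secondary summand $I_{t+1}$ is genuinely nonzero, and this is exactly what the hypothesis $t<m$ provides.
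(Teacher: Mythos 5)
Your proof is correct and follows the same route the paper intends: the corollary is stated there as an immediate consequence of Theorem~\ref{symbpow}, and you simply make that explicit, using $I_t^{(2)}=I_t^2+I_{t+1}$ together with the degree count $t+1<2t$ to get $I_t^2\subsetneq I_t^{(2)}$ for $1<t<m$ (so $I_t^2$ cannot equal its $I_t$-primary component), and the trivial identifications $I_1=\mathfrak{m}$ and $I_m^{(s)}=I_m^s$ for the converse. The only cosmetic point is the phrase ``$I_t$ is a proper associated prime''; the correct way to finish, which your argument already contains, is that a primary $I_t^2$ would have to coincide with its $I_t$-primary component $I_t^{(2)}$.
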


For all the products of minors $\mu=\mu_1\cdots \mu_k$ of shape $\tau=t_1,t_2,\dots,t_k$ and  for all $j\in \mathbb{N}$ one has $\mu\in
I_j^{(\gamma_j(\tau))}$ and thus 
 $$I_{t_1}\cdots I_{t_k}\subseteq \bigcap_{j=1}^{t_1} I_j^{( \gamma_j(\tau))}.$$
Note that  $\cap_{j=1}^{t_1} I_j^{( \gamma_j(\tau))}$, being the intersection  of ideals of standard monomials, is an ideal of standard monomials.  Its $\mathbb{K}$-basis is the set of the standard monomials $\mu$ with $\gamma_j(\mu)\geq \gamma_j(\tau)$ for all $j=1,\dots,t_1$.  
\begin{lemma}
\label{prim7}
 Let $n_1$ and $n_2$ be $c$-chains of $\mathbb{K}[x]$ of length $s$ and $r,$ with $s>r+1.$ Then there exist two $c$-chains $n_3,n_4$ of length  $s-1$ and $r+1$ such that $n_1n_2=n_3n_4.$
\end{lemma}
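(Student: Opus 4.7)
The plan is to reformulate the problem combinatorially: identify $n_1 n_2$ with the multiset $M=\{a_1,\ldots,a_s,b_1,\ldots,b_r\}$ of its indices, so that the goal becomes repartitioning $M$ into two $c$-chains of sizes $s-1$ and $r+1$. The original partition $(n_1,n_2)$ already gives $c$-chains of sizes $(s,r)$, and I want to modify it by effectively moving one index from the $n_1$-part to the $n_2$-part while preserving the $c$-chain property on both sides. A naive single transfer may fail because some $b_j$ could lie within distance $c$ of every $a_i$ (e.g.\ $c=1$, $n_1=x_1x_3x_5x_7$, $n_2=x_2x_6$), so I expect to need a swap spread over several indices at once.

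The construction I have in mind proceeds as follows. First, sort $M$ in weakly increasing order as $e_1\leq e_2\leq\cdots\leq e_{s+r}$. Then cut this sorted list into maximal \emph{blocks} of consecutive terms satisfying $e_{j+1}-e_j\leq c$. The key structural observation is that within each block the labels $n_1$ and $n_2$ must strictly alternate: consecutive entries in a block satisfy $e_{j+1}-e_j\leq c$, so they cannot both belong to the same $c$-chain, which would force their gap to exceed $c$. The same reasoning covers the edge case where $a_i=b_j$, since coincident indices are automatically forced into different chains.

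Granted alternation, each odd-sized block contributes one more instance of one label than the other, while even-sized blocks are balanced. Summing across all blocks, (number of odd blocks with $n_1$ majority) minus (number of odd blocks with $n_2$ majority) equals $s-r$, which is at least $2$ by the hypothesis $s>r+1$. Hence at least one odd block has $n_1$ in the majority. Swapping the two labels on that single block produces a new partition of $M$ in which the $n_1$-part loses exactly one element and the $n_2$-part gains exactly one. Because the swap is local and preserves the alternating pattern inside every block, the two new classes still contain no pair of indices within distance $c$, so they are $c$-chains $n_3$ and $n_4$ of sizes $s-1$ and $r+1$, and $n_3n_4=n_1n_2$ by construction.

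The main obstacle I expect is just pinning down the block structure cleanly; once the alternation claim is phrased correctly, the rest is a short parity/counting argument followed by a local swap. The hypothesis $s>r+1$ enters precisely to guarantee $s-r\geq 2$, which is what forces the existence of at least one odd block with $n_1$ in the majority.
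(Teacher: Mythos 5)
Your construction is correct, but it is genuinely different from the paper's. The paper first normalizes the two chains by the componentwise exchange $i_h\mapsto\min(i_h,j_h)$, $j_h\mapsto\max(i_h,j_h)$ (checking both resulting sequences are still $c$-chains, so one may assume $i_h\leq j_h$ for $h\leq r$), and then splits into two cases: if $i_k<_c j_k$ for some minimal $k$ it splices the chains as $j_1,\dots,j_{k-1},i_{k+1},\dots,i_s$ and $i_1,\dots,i_k,j_k,\dots,j_r$; otherwise it peels off $i_s$ and forms $i_1,\dots,i_{s-1}$ and $j_1,\dots,j_r,i_s$. That route is short and names $n_3,n_4$ explicitly. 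Your route instead sorts the merged multiset, cuts it into maximal blocks with consecutive gaps $\leq c$, uses the (correct) observation that labels strictly alternate inside a block, and finds by a parity count an odd block with $n_1$-majority whose labels you flip; this is more global and makes transparent why rebalancing by one is always possible once $s-r\geq 1$. Two small points deserve to be spelled out to make your verification airtight: (i) any two entries lying in \emph{different} blocks differ by more than $c$ (by maximality of blocks and monotonicity of the sorted list), so cross-block pairs never obstruct the $c$-chain property; and (ii) inside the flipped block the two label classes are exchanged wholesale, so the same-label pairs there are exactly the original ones, which were more than $c$ apart because $n_1,n_2$ are $c$-chains --- alternation alone would only control adjacent entries, not same-label entries at even distance. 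With those one-line remarks your argument is complete; it trades the paper's explicit splicing for a cleaner structural picture, and the paper's proof trades your bookkeeping for brevity.
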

\begin{proof} Let $n_1=x_{i_1}\cdots x_{i_s}$ and $n_2=x_{j_1}\cdots x_{j_r}$. For $h=1,\dots,r$ we set  $i_h^\prime=\min(i_h,j_h)$ and $j_h^\prime=\max(i_h,j_h)$. The sequences $i_1^\prime,\dots,i_r^\prime,i_{r+1},\dots,i_s$ and  $j_1^\prime,\dots,j_r^\prime$ are $c$-chains, and hence we may assume that $i_h\leq j_h$ for all $h=1,\dots,r$. We have to distinguish two cases:
\begin{itemize}
  \item[-] If $i_k<_c j_k$ for some $k$, we take  $k$ to be the minimum of the integers with this property. So $j_{k-1}\leq i_{k-1}+c<i_k<_c i_{k+1}$. Thus  $j_1,\dots,j_{k-1},i_{k+1},\dots,i_s$ and $i_1,\dots,i_k,j_k,\dots,j_r$ are $c$-chains and one takes $n_3$ and $n_4$ to be the associated monomials.
  \item[-] If $i_k\not <_c j_k$ for all $k$ then $j_r\leq i_r+c<i_{r+1}<_c i_s$. Thus  $i_1,\dots,i_{s-1}$ and  $j_1,\dots,j_{r},i_{s}$ are $c$-chains and one takes $n_3$ and $n_4$ to be the associated monomials.
\end{itemize}   
\end{proof}
\begin{lemma} 
\label{prim8}
Let $\tau=t_1,t_2,\dots,t_k$ be a sequence of integers with $m\geq t_1\geq t_2\geq \cdots\geq t_k\geq 1$.
Let $\mu=\mu_1\cdots \mu_q$ be a  product of minors such that $\gamma_j(\mu)\geq \gamma_j(\tau)$ 
for all $j=1,\dots,t_1$. Then there exists a product of minors $\delta_1,\dots,\delta_k$ of shape $\tau$ such that $\ini(\delta_1\cdots \delta_k)|\ini(\mu)$.  
\end{lemma}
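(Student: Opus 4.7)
The plan is to use Lemma \ref{prim7} as a rewriting rule on the $c$-chain decomposition of $\ini(\mu)$, reshape the underlying partition until it dominates a suitable extension of $\tau$ componentwise, and then take for $\delta_i$ the maximal minor associated with the $c$-chain of length $t_i$ via $\phi$.

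First, write $\ini(\mu)=\ini(\mu_1)\cdots\ini(\mu_q)$. This presents $\ini(\mu)$ as a product of $c$-chains of lengths equal to the shape $m=(m_1,\dots,m_q)$ of $\mu$ (sorted decreasingly, padded with zero parts if needed). Lemma \ref{prim7} lets us replace any two such chains of lengths $s>r+1$ by $c$-chains of lengths $s-1$ and $r+1$ with identical product; at the level of shapes, this is exactly the covering move of the dominance order on partitions of a fixed total.

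Now set $N=|\mu|-|\tau|$ (non-negative because $\gamma_1(\mu)\ge\gamma_1(\tau)$) and let $\tilde{\tau}$ be the partition obtained from $\tau$ by appending $N$ parts of size $1$. Using the identity $\gamma_j(\lambda)=\sum_{l\ge j}\lambda^*_l$ for the conjugate partition $\lambda^*$, one verifies $\gamma_1(\tilde{\tau})=|\mu|$, $\gamma_j(\tilde{\tau})=\gamma_j(\tau)$ for $2\le j\le t_1$, and $\gamma_j(\tilde{\tau})=0$ for $j>t_1$. Combined with the hypothesis $\gamma_j(\mu)\ge\gamma_j(\tau)$ for $j\le t_1$, this gives $\gamma_j(\mu)\ge\gamma_j(\tilde{\tau})$ for every $j\ge 1$, which together with $|\mu|=|\tilde{\tau}|$ is equivalent to $\mu$ dominating $\tilde{\tau}$ in the dominance order.

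By the classical fact that dominance on partitions of a fixed integer is generated by the box-moves above, iterated application of Lemma \ref{prim7} transforms the initial shape-$m$ decomposition of $\ini(\mu)$ into a shape-$\tilde{\tau}$ decomposition. In the latter, the first $k$ chains have lengths exactly $t_1,\dots,t_k$; calling them $n_1,\dots,n_k$ and setting $\delta_i=\phi(n_i)$ produces a product of maximal minors of shape exactly $\tau$. Finally $\ini(\delta_1\cdots\delta_k)=n_1\cdots n_k$ divides $\ini(\mu)$, being a sub-product of the shape-$\tilde{\tau}$ decomposition.

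The main obstacle is establishing the combinatorial fact that the box-moves generate the dominance order. It can be proved by induction on $\sum_i(m_i-\tilde{\tau}_i)_+$: if the current shape is not yet $\tilde{\tau}$, dominance forces indices $i<j$ with $m_i>\tilde{\tau}_i$ and $m_j<\tilde{\tau}_j$, which gives $m_i\ge\tilde{\tau}_i+1\ge\tilde{\tau}_j+1\ge m_j+2$, so Lemma \ref{prim7} applies to the pair $(m_i,m_j)$ and strictly decreases the distance.
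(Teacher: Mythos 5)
Your proof is correct in substance but takes a genuinely different route from the paper's. The paper argues by induction on $\deg(\mu)$: if some factor of $\mu$ has size exactly $t_1$ it is split off and the statement for the shorter shape $t_2,\dots,t_k$ is invoked; otherwise two factors of sizes $r<t_1<s$ are replaced, by a single application of Lemma \ref{prim7}, by factors of sizes $r+1$ and $s-1$ with the same initial monomial, and one inducts on the discrepancy $s-r$. You instead work entirely with the $c$-chain factorization of $\ini(\mu)$, translate the hypothesis $\gamma_j(\mu)\ge\gamma_j(\tau)$ into dominance of partitions (correctly, via $\gamma_j(\lambda)=\sum_{l\ge j}\lambda^*_l$ and padding $\tau$ with $N=|\mu|-|\tau|$ parts equal to $1$), and invoke the classical fact that dominance is generated by one-box transfers, each realized multiplicatively by Lemma \ref{prim7}; this gives the stronger conclusion that $\ini(\mu)$ itself factors into $c$-chains of shape $\tilde\tau$, and the lemma follows by applying $\phi$ to the chains of lengths $t_1,\dots,t_k$. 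Your version makes the partition combinatorics explicit and, incidentally, covers uniformly the configuration in which every factor of $\mu$ has size larger than $t_1$, a case the paper's dichotomy passes over silently. Two small points should be patched. First, when the number of parts must grow (e.g. $\mu$ a single large minor), your box move transfers a box onto an empty part; this splitting move -- a $c$-chain of length $s\ge 2$ is the product of a $c$-chain of length $s-1$ and a single variable -- is trivially true but is not literally covered by Lemma \ref{prim7}, whose chains have positive lengths, so state it separately. Second, in your induction for the generation-by-box-moves fact, specify the choice of indices (say $i$ the smallest index where the sorted current shape differs from $\tilde\tau$, and $j$ the smallest index with $m_j<\tilde\tau_j$) and note that with this choice all partial sums of the modified sequence, hence of its decreasing rearrangement, still dominate those of $\tilde\tau$, so the invariant and the strict decrease of $\sum_i(m_i-\tilde\tau_i)_+$ both persist; alternatively just cite the standard result on transfers for the dominance order.
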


\begin{proof} 
We use induction on $r=deg(\mu)$.

If one of the $\mu_i$ is a $t_1$-minor, we have $\gamma_j(\mu_1\cdots\mu_{i-1}\mu_{i+1}\cdots\mu_q)=\gamma_j(\mu)-(t_1+1-j)\geq
\gamma_j(\tau)-(t_1+1-j)=\gamma_j(t_2,\dots,t_k)$ for all $j=1,\dots,t_1$. By induction, there exists a product of minors $\delta'=\delta_1,\dots,\delta_{k-1}$ of shape $t_2,\dots,t_k$ such that $\ini(\delta_1\cdots \delta_{k-1})|\ini(\mu)$. So one has $\delta=\mu_i\delta'$.

Otherwise, we may arrange the factors $\mu_i$  in ascending order according to their size and assume that $\mu_1,\dots,\mu_p$ have size $<t_1$ and $\mu_{p+1},\dots,\mu_q$ have size $>t_1$. Let $r$ be the size of $\mu_p$ and $s$ be the size of $\mu_{p+1}$. By virtue of  Lemma \ref{prim7} we may find two minors $\rho_1$ and $\rho_2$ of size $r+1$ and $s-1$,  respectively, such that $\ini(\rho_1\rho_2)=\ini(\mu_p\mu_{p+1})$. Set $\mu^\prime=\mu_1\cdots\mu_{p-1}\rho_1\rho_2\mu_{p+2}\cdots\mu_q$. 
We note that $\gamma_j(\mu^\prime)\geq \gamma_j(\tau)$ for $j=1,\dots,t_1$.
Since $\ini(\mu)=\ini(\mu^\prime)$ and $\mu^\prime$ has either a factor of size $t_1$ or a smaller $``s-r"$, we may then conclude by induction.
. 
\end{proof}
We have the following theorem:
\begin{theorem}
\label{Primdec}
 Let $\tau=t_1,t_2,\dots,t_k$ be a sequence of integers with $m\geq t_1\geq t_2\geq \cdots\geq t_k\geq 1$. Then
$$I_{t_1}\cdots I_{t_k}= \bigcap_{j=1}^{t_1} I_j^{( \gamma_j(\tau))} $$
is a possibly redundant primary decomposition of $I_{t_1}\cdots I_{t_k}$. 
\end{theorem}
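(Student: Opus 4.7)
The inclusion $I_{t_1}\cdots I_{t_k}\subseteq\bigcap_{j=1}^{t_1}I_j^{(\gamma_j(\tau))}$ is the one displayed just before the statement, so the plan reduces to two remaining tasks: the reverse inclusion, and checking that each term in the intersection is primary.

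Write $J=\bigcap_{j=1}^{t_1}I_j^{(\gamma_j(\tau))}$. By Corollary~\ref{coro1}, each $I_j^{(\gamma_j(\tau))}$ is an ideal of standard monomials, and since this class is closed under intersection, $J$ is one too, with standard-monomial $\mathbb{K}$-basis the set $B$ of standard monomials $\mu$ satisfying $\gamma_j(\mu)\geq\gamma_j(\tau)$ for every $j\in\{1,\dots,t_1\}$. So it suffices to show $B\subseteq I_{t_1}\cdots I_{t_k}$.

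Fix $\mu\in B$. A standard monomial is itself a product of minors, so by Lemma~\ref{prim8} there is a product $\delta_1\cdots\delta_k$ of shape $\tau$---hence an element of $I_{t_1}\cdots I_{t_k}$---with $\ini(\delta_1\cdots\delta_k)\mid\ini(\mu)$. Setting $\nu=(\ini(\mu)/\ini(\delta_1\cdots\delta_k))\cdot\delta_1\cdots\delta_k$ yields $\nu\in I_{t_1}\cdots I_{t_k}$ with $\ini(\nu)=\ini(\mu)$, so for a suitable $\alpha\in\mathbb{K}^{\times}$ the homogeneous polynomial $\mu-\alpha\nu$ has strictly smaller $<$-initial monomial and still lies in $J$. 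Writing $\mu-\alpha\nu$ in the standard-monomial basis of $\mathbb{K}[x]$ uses only elements of $B$ (because $\mu-\alpha\nu\in J$ and $B$ is the basis of $J$), and each summand has initial monomial strictly below $\ini(\mu)$. Since $<$ restricted to the finitely many monomials of any fixed degree is a well-ordering, downward induction on $\ini(\mu)$ places each such summand in $I_{t_1}\cdots I_{t_k}$ and hence $\mu\in I_{t_1}\cdots I_{t_k}$.

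For the primary-decomposition assertion, Corollary~\ref{co1thm6} identifies $I_j=I_2^{\{j-1\}}$, and the discussion just before Lemma~\ref{Gb1} notes that $I_2$ is prime and its secant ideals are geometrically prime, so every $I_j$ is prime; consequently $I_j^{(\gamma_j(\tau))}$ is $I_j$-primary whenever $\gamma_j(\tau)>0$ and equals $R$ (and can be dropped) otherwise, making the displayed intersection a possibly redundant primary decomposition. The main obstacle I foresee is the reverse-inclusion step, specifically the guarantee that the standard-monomial expansion of $\mu-\alpha\nu$ stays inside $B$; this is exactly what the ``ideal of standard monomials'' framework from Corollary~\ref{coro1} delivers, and once Lemma~\ref{prim8} supplies a product of shape $\tau$ with a leading term dividing $\ini(\mu)$, the downward induction runs cleanly.
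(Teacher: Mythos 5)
Your proof is correct and follows essentially the same route as the paper: the easy forward inclusion, Lemma \ref{prim8}, and the standard-monomial basis of $\bigcap_{j=1}^{t_1} I_j^{(\gamma_j(\tau))}$ coming from Corollary \ref{coro1} are exactly the ingredients used there. The only cosmetic difference is that you carry out the final step as an explicit descent on initial monomials within the standard-monomial basis, whereas the paper packages it as the sandwich $J\subseteq \ini(I_{t_1})\cdots\ini(I_{t_k})\subseteq \ini\bigl(\bigcap_{j=1}^{t_1} I_j^{(\gamma_j(\tau))}\bigr)\subseteq J$ of initial ideals, and your explicit remark that each $I_j$ is prime (so each $I_j^{(\gamma_j(\tau))}$ is $I_j$-primary) makes precise a point the paper leaves implicit.
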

\begin{proof}
Let $J$ denote the ideal generated by the initial monomials of the products of minors of shape $\tau$. Since $\ini(\bigcap_{j=1}^{t_1} I_j^{( \gamma_j(\tau))})$ is generated by the initial monomials of the standard monomials $\mu$ with $\gamma_j(\mu)\geq
\gamma_j(\tau)$ for all $j=1,\dots,t_1$, by Lemma \ref{prim8} one has $\ini(\bigcap_{j=1}^{t_1} I_j^{( \gamma_j(\tau))})\subseteq J$. Since  $$I_{t_1}\cdots I_{t_k}\subseteq \bigcap_{j=1}^{t_1} I_j^{( \gamma_j(\tau))},$$
we have
$$
J\subseteq \ini(I_{t_1})\cdots \ini(I_{t_k})\subseteq  \ini(\bigcap_{j=1}^{t_1} I_j^{(
\gamma_j(\tau))})\subseteq J.
$$
It follows that 
$I_{t_1}\cdots I_{t_k}= \bigcap_{j=1}^{t_1} I_j^{( \gamma_j(\tau))}$.
\end{proof}
The proof of the theorem has the following important corollaries:
\begin{corollary} 
\label{prim9}
 Let $\tau=t_1,t_2,\dots,t_k$ be a sequence of integers with $m\geq t_1\geq t_2\geq \cdots\geq t_k\geq 1$. Then the product of minors of shape $\tau$ form a Gr\"obner basis of the ideal $I_{t_1}\cdots I_{t_k}$. In particular one has:
$$\ini(I_{t_1}\cdots I_{t_k})=\ini(I_{t_1})\cdots \ini(I_{t_k}).$$
\end{corollary}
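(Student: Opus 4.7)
The plan is to extract the corollary directly from the chain of inclusions already established in the proof of Theorem \ref{Primdec}. Let $J$ denote the monomial ideal generated by the initial monomials of all products of minors of shape $\tau$. Recall that $\ini(I_{t_i})=J_{t_i}$, the ideal generated by $c$-chains of length $t_i$, and that the initial term of any $t_i$-minor is itself a $c$-chain of length $t_i$ by Corollary \ref{co1lm3}(b). Hence every generator of $J$ is a product of generators of $\ini(I_{t_1}),\dots,\ini(I_{t_k})$, which gives the leftmost inclusion in
$$J \;\subseteq\; \ini(I_{t_1})\cdots \ini(I_{t_k}) \;\subseteq\; \ini(I_{t_1}\cdots I_{t_k}) \;\subseteq\; \ini\Big(\bigcap_{j=1}^{t_1} I_j^{(\gamma_j(\tau))}\Big) \;\subseteq\; J.$$
The second and third inclusions are the general facts $\ini(A)\ini(B)\subseteq\ini(AB)$ and monotonicity of $\ini$ applied to $I_{t_1}\cdots I_{t_k}\subseteq\bigcap_j I_j^{(\gamma_j(\tau))}$ (which is precisely the content of Theorem \ref{Primdec}); the last inclusion is Lemma \ref{prim8}.

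Once the chain is closed all the inclusions collapse to equalities. I then read off immediately
$$\ini(I_{t_1}\cdots I_{t_k}) \;=\; \ini(I_{t_1})\cdots \ini(I_{t_k}) \;=\; J,$$
which is the displayed identity in the statement. For the Gr\"obner basis assertion, I observe that every product of minors of shape $\tau$ visibly lies in $I_{t_1}\cdots I_{t_k}$, and that by definition their initial monomials generate $J$, which we have just identified with $\ini(I_{t_1}\cdots I_{t_k})$. By the standard Gr\"obner basis criterion this is exactly what it means for these products to form a Gr\"obner basis of $I_{t_1}\cdots I_{t_k}$ with respect to $<$.

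I do not anticipate any genuine obstacle here: the heavy lifting (Lemma \ref{prim7}, Lemma \ref{prim8}, Theorem \ref{Primdec}, and the standard-monomial machinery) has already been done, and the corollary is essentially a repackaging of the squeeze of inclusions used for the theorem. The only small verification worth mentioning is that the initial term of a product of minors is literally the product of initial terms, with no cancellation; but this is automatic because the initial form of each minor is a single monomial.
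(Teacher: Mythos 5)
Your proof is correct and follows essentially the same route as the paper: Corollary \ref{prim9} is read off from the squeeze $J\subseteq\ini(I_{t_1})\cdots\ini(I_{t_k})\subseteq\ini(I_{t_1}\cdots I_{t_k})\subseteq\ini\bigl(\bigcap_{j=1}^{t_1} I_j^{(\gamma_j(\tau))}\bigr)\subseteq J$ used in the proof of Theorem \ref{Primdec}, with the last step resting on Lemma \ref{prim8} together with the standard-monomial description of the intersection. The only minor imprecision is crediting the containment $I_{t_1}\cdots I_{t_k}\subseteq\bigcap_j I_j^{(\gamma_j(\tau))}$ to Theorem \ref{Primdec} itself, when it is the elementary inclusion noted just before the theorem, but this does not affect the argument.
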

\begin{corollary}
\label{prim10}
Let $\tau=t_1,t_2,\dots,t_k$ be a sequence of integers with $m\geq t_1\geq t_2\geq \cdots\geq t_k\geq 1$. Then
$$\ini(I_{t_1})\cdots \ini(I_{t_k})=\bigcap_{j=1}^{t_1} \bigcap_{z\in A_{j-1}} P_z^{\gamma_j(\tau)}$$
is a possibly redundant primary decomposition of $\ini(I_{t_1})\cdots \ini(I_{t_k})$. 
\end{corollary}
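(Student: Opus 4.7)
The plan is to chain together the results already established. First, I would apply Corollary \ref{prim9} to rewrite the left-hand side as
\[
\ini(I_{t_1})\cdots \ini(I_{t_k})=\ini(I_{t_1}\cdots I_{t_k}),
\]
and then invoke Theorem \ref{Primdec} to identify this with $\ini\!\left(\bigcap_{j=1}^{t_1} I_j^{(\gamma_j(\tau))}\right)$. The goal is then reduced to showing that this initial ideal equals $\bigcap_{j=1}^{t_1}\bigcap_{z\in A_{j-1}} P_z^{\gamma_j(\tau)}$.

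The key observation is that initial ideals commute with intersections in the present setting because each $I_j^{(\gamma_j(\tau))}$ is an ideal of standard monomials (Corollary \ref{coro1}). Thus a standard monomial $\mu$ lies in $\bigcap_{j=1}^{t_1} I_j^{(\gamma_j(\tau))}$ if and only if $\gamma_j(\mu)\geq \gamma_j(\tau)$ for every $j=1,\dots,t_1$. Since the map $\Phi$ is a bijection between ordinary monomials and standard monomials with $\ini(\Phi(\delta))=\delta$, and $\gamma_j(\Phi(\delta))=\gamma_{j,c}(\delta)$ by the construction of the $c$-decomposition, the standard monomials in the intersection have initial monomials exactly the ordinary monomials $\delta$ with $\gamma_{j,c}(\delta)\geq \gamma_j(\tau)$ for all $j$. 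Hence
\[
\ini\!\left(\bigcap_{j=1}^{t_1} I_j^{(\gamma_j(\tau))}\right)=\Big\langle \delta \ :\ \gamma_{j,c}(\delta)\geq \gamma_j(\tau)\ \text{for all } j=1,\dots,t_1\Big\rangle.
\]

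Next, I would match this description with the right-hand side by applying Theorem \ref{lmsym} factor by factor: for each $j$, one has $\bigcap_{z\in A_{j-1}} P_z^{\gamma_j(\tau)}=(J_j)^{(\gamma_j(\tau))}=\langle x^a : \gamma_{j,c}(x^a)\geq \gamma_j(\tau)\rangle$. Intersecting these monomial ideals over $j$ yields exactly $\langle \delta : \gamma_{j,c}(\delta)\geq \gamma_j(\tau)\ \text{for all } j\rangle$, which coincides with the expression obtained above. This establishes the claimed equality.

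Finally, for the primary decomposition statement, I would observe that each $P_z=(x_i : i\notin F_z)$ is a monomial prime ideal generated by a subset of the variables, so $P_z^{\gamma_j(\tau)}$ is $P_z$-primary. The displayed intersection is therefore a primary decomposition, possibly redundant since some components may repeat or be superfluous. The main obstacle I anticipate is the bookkeeping step showing $\ini$ commutes with the intersection of the $I_j^{(\gamma_j(\tau))}$; this relies crucially on the ``ideal of standard monomials'' framework of Corollary \ref{coro1}, which ensures that intersections and initial ideals interact cleanly because distinct standard monomials have distinct leading terms.
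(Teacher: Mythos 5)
Your proposal is correct and takes essentially the same route the paper intends: the equality is extracted from the chain of inclusions in the proof of Theorem \ref{Primdec} (via Corollary \ref{prim9}), combined with the fact that $\bigcap_{j=1}^{t_1} I_j^{(\gamma_j(\tau))}$ is an ideal of standard monomials whose initial ideal is read off from the shapes, and with Theorem \ref{lmsym}'s identification $\bigl(J_j\bigr)^{(\gamma_j(\tau))}=\bigcap_{z\in A_{j-1}}P_z^{\gamma_j(\tau)}=\bigl\langle x^a:\gamma_{j,c}(x^a)\geq\gamma_j(\tau)\bigr\rangle$, while the primary-decomposition claim follows, as you say, because each $P_z$ is a prime generated by variables so $P_z^{\gamma_j(\tau)}$ is $P_z$-primary. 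Your step that ``$\ini$ commutes with the intersection'' is exactly the paper's observation that an intersection of ideals of standard monomials is again an ideal of standard monomials, so there is no gap.
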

We can derive the following important results for the special case $t_1=\cdots=t_k=t,$ using the same arguments the author uses in \cite[Theorem 3.16]{C}.
\begin{theorem} 
\label{irredprimdec}
(a) Let $1\leq t\leq m$ and $k\in \mathbb{N}$. Set $u=\max(1,m-k(m-t))$. Then:
$$I_t^k=\bigcap_{j=u}^{t} I_j^{(k(t+1-j))}$$
is an irredundant  primary decomposition of $I_t^k$.\\
(b) $\ini(I_t^k)=\ini(I_t)^k$ for all $k$. 
\end{theorem}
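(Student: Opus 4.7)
The plan is to derive (a) from Theorem \ref{Primdec} together with an accounting argument in the standard monomial basis, while (b) will follow at once from Corollary \ref{prim9}. Specializing Theorem \ref{Primdec} to the constant sequence $\tau=(t,\dots,t)$ of length $k$ gives $\gamma_j(\tau)=k(t+1-j)$ for $1\le j\le t$ and hence the \emph{a priori} possibly redundant decomposition
$$I_t^k \;=\; \bigcap_{j=1}^{t} I_j^{(k(t+1-j))},$$
each factor being $I_j$-primary since $I_j$ is prime. So what remains is to show (i) the components with $j<u$ are redundant, and (ii) those with $u\le j\le t$ are all needed.

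For step (i), I will work in the $\mathbb{K}$-basis of standard monomials. By Corollary \ref{coro1} each $I_j^{(s)}$, and hence the intersection above, is an ideal of standard monomials whose basis consists of the standard $\mu$ satisfying $\gamma_j(\mu)\ge s$. The task therefore reduces to a purely combinatorial claim: if a standard monomial $\mu$ of shape $(s_1,\dots,s_p)$ (with $s_i\le m$) obeys $\gamma_{j'}(\mu)\ge k(t+1-j')$ for every $j'\in[u,t]$, then $\gamma_j(\mu)\ge k(t+1-j)$ for every $j<u$. The central estimate exploits the definition $u=m-k(m-t)$, which yields $m-u+1=k(m-t)+1$ and $k(t+1-u)=k((k-1)(m-t)+1)$, so that
$$|\{i:s_i\ge u\}|\;\ge\;\frac{\gamma_u(\mu)}{m-u+1}\;\ge\;\frac{k((k-1)(m-t)+1)}{k(m-t)+1}\;=\;(k-1)+\frac{1}{k(m-t)+1}\;>\;k-1,$$
forcing $|\{i:s_i\ge u\}|\ge k$. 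Since $\gamma_{j-1}(\mu)-\gamma_j(\mu)=|\{i:s_i\ge j\}|$ is non-increasing in $j$, this gives $\gamma_{j-1}(\mu)-\gamma_j(\mu)\ge k$ for every $j\le u$, and this is exactly the slope of $k(t+1-j)$. A descending induction from $j=u$ down to $j=1$ then finishes step (i).

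For step (ii), the case $j=t$ is immediate because $I_t$ is prime (and the unique minimal prime of $I_t^k$), so its primary component cannot be omitted. For each $j\in[u,t)$ the plan is to exhibit a standard monomial $\mu$ with $\gamma_{j'}(\mu)\ge k(t+1-j')$ for every $j'\in[u,t]\setminus\{j\}$ but $\gamma_j(\mu)<k(t+1-j)$; such a $\mu$ lies in every component except the $j$-th and so witnesses that $I_j$ is an associated prime. Natural candidates are shapes made of $k-1$ maximal parts of size $m$, which handle $j=u$ in the unclipped regime, and mixtures of one part of intermediate size with $k-1$ parts of size $t$ (or with appropriately adjusted large parts) for $j$ strictly between $u$ and $t$. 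Part (b) is then obtained by applying Corollary \ref{prim9} to $\tau=(t,\dots,t)$: the products of $t$-minors form a Gr\"obner basis of $I_t^k$, and $\ini(I_t^k)=\ini(I_t)\cdots\ini(I_t)=\ini(I_t)^k$.

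The main obstacle in this plan is the systematic construction of witnesses in step (ii) for every $j\in(u,t)$: the shape must be tuned so that $\gamma_{j'}(\mu)\ge k(t+1-j')$ is met for all $j'\ne j$ while strictly failing at $j$, and the verification will likely split into subcases distinguishing whether $u=1$ (the clipped regime), whether $j$ is close to $u$ or to $t$, and how $k$ compares with $m-t+1$. Once this case analysis is organized, both (a) and (b) follow as indicated.
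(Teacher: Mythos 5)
Your derivation of the equality and of part (b) is sound: specializing Theorem \ref{Primdec} to $\tau=(t,\dots,t)$, passing to the standard-monomial bases of Corollary \ref{coro1}, and using the slope estimate $|\{i: s_i\ge u\}|\ge \gamma_u(\mu)/(m-u+1)>k-1$ to absorb the components with $j<u$ is correct (the small slip $\gamma_{j-1}(\mu)-\gamma_j(\mu)=|\{i:s_i\ge j-1\}|$, not $|\{i:s_i\ge j\}|$, is harmless), and (b) is indeed immediate from Corollary \ref{prim9}. This is in the spirit of what the paper does, which simply invokes the argument of Conca for the Hankel case.

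The genuine gap is step (ii): irredundance is part of the statement, and you have not proved it — you only name candidate shapes and explicitly defer the case analysis, and the candidates you name do not work as stated. The family ``one part of intermediate size $s$ together with $k-1$ parts of size $t$'' can never be a witness: the condition $\gamma_{j'}(\mu)\ge k(t+1-j')$ at $j'=t$ forces $s\ge t$, and then $\gamma_j(\mu)=(k-1)(t+1-j)+(s+1-j)\ge k(t+1-j)$, so the constraint you want to violate at $j$ holds automatically. Likewise ``$k-1$ parts of size $m$'' fails exactly in the clipped regime $u=1$, i.e.\ when $k(m-t)\ge m$: there $\gamma_1=(k-1)m\ge kt$, so this monomial lies in the $j=1$ component as well and witnesses nothing. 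A working construction has to tune the large parts: for $u\le j<t$ one can take $k-1$ parts of sizes $s$ and $s+1$ with $s-j=\lfloor k(t-j)/(k-1)\rfloor$ and the number $p$ of parts of size $s+1$ equal to $k(t-j)-(k-1)(s-j)$ (the constraints at $j'=j$ and $j'=j+1$ pin $p$ down exactly, and $j\ge u$ is precisely what makes $s\le m$ feasible), then add parts of size $j-1$ to meet the constraints at $j'<j$ without affecting $\gamma_{j'}$ for $j'\ge j$; separate treatment is still needed for $j=u$ in the clipped regime and for the trivial case $k=1$. Until this construction (or an equivalent associated-primes argument) is carried out and verified, the ``irredundant'' half of part (a) remains unproved.
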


\section{ Products of determinantal ideals with linear resolution}
In this section, we prove that any product of determinantal ideals
$$ I=I_{t_1}I_{t_2}\cdots I_{t_k}$$
has a linear resolution.

We know that the initial ideal of $I$ is $ J=J_{t_1}J_{t_2}\cdots J_{t_k}.$
We can assume that $m\geq t_1 \geq t_2 \geq \cdots \geq t_k\geq 1 .$

One says that an ideal $J\subseteq R=\mathbb{K}[x]$ has \textit{linear quotients} if $J$ has a system of generators $\mu_1,\dots,\mu_h$ such that for every $k=1,\dots,h$ one has that $\big<\mu_1,\dots,\mu_{k-1}\big>:_R\mu_k$ is an ideal generated by linear forms. It is easy to see that ideals with linear quotients have linear resolutions. 

We denote by $\Omega$  the set of the monomials $\mu$ such that $\deg(\mu)=\sum_{i=1}^kt_i$ and for all $i=1,\dots,t_1$ we have $\gamma_{i,c}(\mu)\geq \gamma_i(\tau)$ where $\tau=(t_1,\dots,t_k)$.\\
By Lemma \ref{prim8}, we have:
\begin{proposition}
(i) $\Omega$ is a system of generators of J.\\
(ii) Let $\mu$ be a monomial with a decomposition $\mu = \eta_1\cdots \eta_v$ where the $\eta_i$ are $c$-chains. Set $s=\deg(\eta_1),\dots,\deg(\eta_v).$ Then $\gamma_{i,c}(\mu)\geq \gamma_i(s)$ for all $i=1,\dots,t_1.$ 
\end{proposition}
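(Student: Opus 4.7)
The plan is to prove (ii) first as a super-additivity property of $\gamma_{i,c}$, then deduce (i) by combining (ii) with Lemma \ref{prim8}.

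For (ii), the cleanest route goes through the symbolic power characterization. By Theorem \ref{lmsym}, a monomial $\nu$ lies in $J_i^{(s)}$ if and only if $\gamma_{i,c}(\nu) \geq s$. Since the symbolic powers of any radical ideal satisfy $J_i^{(s)} \cdot J_i^{(s')} \subseteq J_i^{(s+s')}$, the function $\gamma_{i,c}$ is super-additive under multiplication of monomials:
\[
\gamma_{i,c}(\alpha\beta) \geq \gamma_{i,c}(\alpha) + \gamma_{i,c}(\beta).
\]
Iterating on $\mu = \eta_1\cdots\eta_v$, and observing that a single $c$-chain $\eta_j$ of degree $s_j$ has $c$-decomposition of shape $(s_j)$ so that $\gamma_{i,c}(\eta_j) = \max(s_j - i + 1, 0)$, I would obtain $\gamma_{i,c}(\mu) \geq \sum_j \max(s_j - i + 1, 0) = \gamma_i(s)$.

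For (i), the direction $J \subseteq \langle \Omega \rangle$ is immediate: every monomial generator of $J = J_{t_1}\cdots J_{t_k}$ is a product $m_1\cdots m_k$ of $c$-chains $m_j$ of length $t_j$, which by (ii) satisfies $\gamma_{i,c}(m_1\cdots m_k) \geq \sum_j \gamma_i(t_j) = \gamma_i(\tau)$ and has degree $\sum t_j$, hence belongs to $\Omega$. Conversely, for $\mu \in \Omega$ with $c$-decomposition $\mu = \delta_1\cdots\delta_p$, I would lift via the bijection $\Phi$ to the product of maximal minors $\Phi(\mu) = \phi(\delta_1)\cdots\phi(\delta_p)$. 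This product has initial monomial $\mu$ and shape equal to that of $\mu$, so $\gamma_j(\Phi(\mu)) = \gamma_{j,c}(\mu) \geq \gamma_j(\tau)$ for all $j=1,\ldots,t_1$. Applying Lemma \ref{prim8} to $\Phi(\mu)$ produces a product of minors $\delta_1'\cdots\delta_k'$ of shape $\tau$ whose initial monomial divides $\mu$; since $\deg(\delta_1'\cdots\delta_k') = \sum t_j = \deg(\mu)$, divisibility forces equality, and $\ini(\delta_1'\cdots\delta_k')$ is visibly a generator of $J$, giving $\mu \in J$.

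The main obstacle is part (ii): deriving super-additivity of $\gamma_{i,c}$ directly from the Corollary after Lemma \ref{cc} would require carefully tracking how $c$-decompositions evolve under multiplication by additional $c$-chains, which does not iterate cleanly because the product of two $c$-chains need not be a $c$-chain and the canonical decomposition must be recomputed at each step. Routing through the symbolic-power identity of Theorem \ref{lmsym} bypasses this bookkeeping entirely; once (ii) is in place, part (i) is essentially forced by the degree matching step together with Lemma \ref{prim8}.
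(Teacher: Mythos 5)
Your argument is correct, and it fills in a proof where the paper gives essentially none: the paper simply prefaces the proposition with ``By Lemma \ref{prim8}, we have:''. Your part (i) is exactly that intended argument — generators of $J$ lie in $\Omega$ by (ii), and conversely each $\mu\in\Omega$ lifts under $\Phi$ to a product of maximal minors with $\gamma_j(\Phi(\mu))=\gamma_{j,c}(\mu)\geq\gamma_j(\tau)$, so Lemma \ref{prim8} together with the degree count $\deg\big(\ini(\delta_1'\cdots\delta_k')\big)=\sum_i t_i=\deg(\mu)$ forces $\mu=\ini(\delta_1')\cdots\ini(\delta_k')$, a product of $c$-chains of lengths $t_1,\dots,t_k$, hence a generator of $J$. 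For part (ii) you route through Theorem \ref{lmsym} rather than the corollary to Lemma \ref{cc}; this is a genuinely different (and cleaner) path, since that corollary only covers a product of two $c$-chains and, as you note, does not iterate directly because $c$-decompositions must be recomputed. Two small glosses are worth recording to make your use of Theorem \ref{lmsym} airtight: (a) the ideal identity $J_i^{(s)}=\big\langle x^a:\gamma_{i,c}(x^a)\geq s\big\rangle$ a priori only says a monomial of $J_i^{(s)}$ is \emph{divisible} by such an $x^a$; the equivalence you invoke, that a monomial $\nu$ lies in $J_i^{(s)}$ if and only if $\gamma_{i,c}(\nu)\geq s$, is exactly what the ``Conversely'' half of the proof of Theorem \ref{lmsym} establishes (monomials of $\bigcap_{z\in A_{i-1}}P_z^{s}$ have $\gamma_{i,c}\geq s$), so that step of the proof, not just the bare statement, should be cited; (b) Theorem \ref{lmsym} concerns $J_{r+1}=J_2^{\{r\}}$ with $r\geq 1$, so the case $i=1$ of (ii) should be handled separately, where it is the trivial identity $\gamma_{1,c}(\mu)=\deg(\mu)=\gamma_1(s)$. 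With these remarks, your super-additivity $\gamma_{i,c}(\alpha\beta)\geq\gamma_{i,c}(\alpha)+\gamma_{i,c}(\beta)$, applied to the $c$-chains $\eta_j$ (whose $c$-decomposition is the chain itself, so $\gamma_{i,c}(\eta_j)=\max(s_j-i+1,0)$), yields (ii), and (i) follows as you describe.
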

We introduce a total order $\sigma$ on the monomials of $R$ as follows. Let $\mu, \eta$ be monomials of $R$ and $\mu=\mu_1\cdots \mu_k$ and $\eta=\eta_1\cdots \eta_h$ their $c$-decompositions. We set $\mu >_{\sigma} \eta$ if $\mu_j > \eta_j$ in the degree lexicographic order for the first index $j$ such that $\mu_j \not= \eta_j.$

The following result can be proved by modifying the argument given in \cite[proposition 6.2]{CH}, just replace "$1$-chain"  with "$c$-chain".  
\begin{theorem}
\label{linquo}
Let $$J=J_{t_1}J_{t_2}\cdots J_{t_k},$$ where $J_t=\big<x_{a_1}\cdots x_{a_t}\; :\; a_1,a_2,\dots,a_t \mbox{ is a c-chain}\big>.$ Then J has linear quotients.
\end{theorem}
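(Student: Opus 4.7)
The plan is to establish linear quotients for the generating set $\Omega$ of $J$, ordered in decreasing $\sigma$-order. By the standard criterion this reduces to the following assertion: for every pair $\eta,\mu\in\Omega$ with $\eta>_\sigma\mu$, there exist a variable $x_p$ and a generator $\xi\in\Omega$ with $\xi>_\sigma\mu$ such that $x_p$ divides $\eta/\gcd(\eta,\mu)$ and $\xi$ divides $x_p\mu$. Once such a pair $(x_p,\xi)$ is produced, the variable $x_p$ lies in $(\xi)\colon\mu$, and letting $\eta$ range over all $\sigma$-larger generators shows that the colon ideal of all $\sigma$-larger generators by $\mu$ is generated by linear forms.

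The key construction is a variable swap driven by the $c$-decompositions. Writing $\mu=\mu_1\mu_2\cdots$ and $\eta=\eta_1\eta_2\cdots$, let $i$ be the least index with $\mu_i\neq\eta_i$; by the definition of $\sigma$ the factor $\eta_i$ strictly exceeds $\mu_i$ in the degree-lexicographic order. Using the greedy construction of the $c$-decomposition, one identifies a variable $x_p$ occurring in $\eta_i$ and a variable $x_q$ occurring in $\mu$ (either in $\mu_i$ itself, when $\deg\eta_i=\deg\mu_i$, or in some later factor of the $c$-decomposition of $\mu$, when $\deg\eta_i>\deg\mu_i$) such that $x_p\neq x_q$, the variable $x_p$ is the witness that $\eta_i>\mu_i$, and removing $x_q$ preserves as much of the $c$-decomposition of $\mu$ as possible. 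Define $\xi:=x_p\mu/x_q$; then $\xi$ divides $x_p\mu$ with cofactor $x_q$, trivially.

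Three verifications remain. First, $\xi\in\Omega$: by the corollary following Lemma~\ref{cc}, the invariants $\gamma_{j,c}$ are subadditive along the $c$-decomposition, and since the swap merely replaces a variable of larger index by one of smaller index in a way compatible with the $c$-chain structure, no $\gamma_{j,c}$ drops below its $\mu$-value, so $\xi$ still satisfies $\gamma_{j,c}(\xi)\geq\gamma_j(\tau)$ for every $j=1,\dots,t_1$. Second, $\xi>_\sigma\mu$: by construction the first $i-1$ factors of the $c$-decomposition of $\xi$ coincide with those of $\mu$, while the $i$-th factor has moved lex-strictly closer to $\eta_i$ than $\mu_i$ was. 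Third, $x_p$ divides $\eta/\gcd(\eta,\mu)$, which holds provided the multiplicity of $x_p$ in $\eta$ strictly exceeds that in $\mu$.

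The main obstacle is this third condition: the variable $x_p$ chosen from $\eta_i$ might also appear in some later factor $\mu_j$ of the $c$-decomposition of $\mu$, in which case the multiplicities of $x_p$ in $\mu$ and $\eta$ can coincide and $x_p\nmid\eta/\gcd(\eta,\mu)$. Handling this demands a case analysis in which, should the natural candidate fail, one first performs a preliminary exchange inside the later factors of $\mu$ to relocate the stray occurrence of $x_p$, producing an intermediate generator $\mu'\in\Omega$ in which the obstruction is removed, and then applies the primary swap to $\mu'$. This is exactly the content of the $c=1$ argument in \cite{CH}, Proposition~6.2; the only properties of $1$-chains used there are the canonicity of the greedy decomposition and the invariance of the chain property under the exchange of a smaller-index for a larger-index variable, both of which hold verbatim for $c$-chains. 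Consequently the proof is obtained by carrying out the argument of loc.\ cit.\ with ``$1$-chain'' replaced by ``$c$-chain'' throughout.
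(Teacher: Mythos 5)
Your proposal follows essentially the same route as the paper: the paper's entire proof of Theorem \ref{linquo} consists of invoking \cite[Proposition 6.2]{CH} and replacing ``$1$-chain'' by ``$c$-chain'' throughout, which is exactly the content of your concluding paragraph. The additional structure you sketch (decreasing $\sigma$-order on $\Omega$, the exchange variable, and the multiplicity obstruction) is just an unpacking of that cited argument, not a different approach.
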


In this case, all generators of $J$ have the same degree. This implies that $J$ has a linear resolution over $R$.
Moreover, we have a well-known inequality for Betti numbers: $\beta_{ij}(R/I)\leq \beta_{ij}(R/\ini(I))$. One concludes:
\begin{theorem}
Let $$I=I_{t_1}I_{t_2}\cdots I_{t_k},$$ where the ideals $I_t$ are generated by the $t$-minors of $X_t$. Then $I$ has a linear resolution.
\end{theorem}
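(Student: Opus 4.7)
The plan is to bootstrap from the monomial initial ideal to $I$ itself, using the standard Betti-number inequality under Gr\"obner degeneration. First I would invoke Corollary \ref{prim9}, which identifies
$$\ini_<(I) = \ini(I_{t_1})\cdots\ini(I_{t_k}) = J_{t_1}\cdots J_{t_k} = J.$$
This reduces the problem from a determinantal ideal to the monomial ideal $J$, which is far more tractable combinatorially.

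Second, I would apply Theorem \ref{linquo}: the ideal $J$ has linear quotients with respect to the generating set $\Omega$ ordered by the term order $\sigma$. Crucially, every element of $\Omega$ has the same degree $d = t_1 + t_2 + \cdots + t_k$, by the very definition of $\Omega$. When a monomial ideal is equigenerated and has linear quotients, its minimal free resolution is linear: this is a well-known consequence of the iterated mapping cone construction, since at each stage the colon ideal $\langle \mu_1,\dots,\mu_{k-1}\rangle :_R \mu_k$ is generated in degree $1$ and contributes only linear syzygies. Hence $J$ admits a $d$-linear resolution over $R$.

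Third, I would invoke the semicontinuity inequality $\beta_{ij}(R/I)\leq \beta_{ij}(R/\ini_<(I))$ for graded Betti numbers under Gr\"obner degeneration, which is valid for any term order. Since $I$ is itself generated in the single degree $d$ (its natural generators being the products of minors of shape $\tau$), and since the vanishing $\beta_{i,i+d+\ell}(R/J)=0$ holds for every $\ell\geq 1$ by the linearity of the resolution of $J$, the inequality forces $\beta_{i,i+d+\ell}(R/I)=0$ as well. This is exactly the statement that $I$ has a $d$-linear resolution, completing the proof.

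The main obstacle has essentially been absorbed into Theorem \ref{linquo}, whose proof is only sketched here by reference to \cite[Proposition 6.2]{CH}. The subtle point in that argument is the compatibility of three ingredients: the $c$-decomposition of a monomial, the total order $\sigma$ that orders generators by their first differing $c$-factor, and the shape function $\gamma_{i,c}$ used to test membership in $\Omega$. Once these fit together so that each colon $\langle \mu_1,\dots,\mu_{k-1}\rangle :_R \mu_k$ can be shown to be generated by variables, the remainder of the argument is essentially formal, since the Gr\"obner-theoretic bridge between $I$ and $J$ has already been built in Section~3.
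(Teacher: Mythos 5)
Your proposal is correct and follows essentially the same route as the paper: identify $\ini_<(I)=J_{t_1}\cdots J_{t_k}$ via Corollary \ref{prim9}, use Theorem \ref{linquo} (linear quotients for the equigenerated monomial ideal $J$) to get a linear resolution of $J$, and transfer linearity to $I$ by the Betti-number inequality $\beta_{ij}(R/I)\leq \beta_{ij}(R/\ini(I))$. The extra details you supply (mapping cones, vanishing of $\beta_{i,i+d+\ell}$) are just elaborations of the paper's argument, not a different method.
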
 
\section{Quasi-Sorted Form and Rees Algebra}
In \cite{C}, the author studied the Rees algebra of determinantal ideals in the Hankel case. In this section, we deal with a more general case. We start with the following definition. 
\begin{definition}
Let $a=a_1,\dots,a_s$ be a $c$-chain. We define $L(a)$ to be the union of closed intervals $[a_i-c,a_i]$ for all $i=2,\dots,s$.\\
Let $a=a_1,\dots,a_s$ and $b=b_1,\dots,b_r$ be two $c$-chains with $x_a>_{\tau} x_b$. The pair $(a,b)$ is called \textit{quasi-sorted} if  $a_i\leq b_i$ for all $i=1,\dots,r$ and either $b_i\leq a_{i+1}$ for all $i=1,\dots,r$ or $b_i\leq a_{i+1}$   for all $i=1,\dots,k-1$ and $b_k>a_k+1$ and $b_k,\dots,b_r\in L(a).$ If $b_i\leq a_{i+1}$ for all $i=1,\dots,r$ then $(a,b)$ is called \textit{sorted}.

More generally, let $a^{(1)},\dots,a^{(k)}$ be $c$-chains with $a^{(i)}=a^{(i)}_1,\dots,a^{(i)}_{n_i}$ such that $x_{a^{(i)}}>_{\tau}x_{a^{(i+1)}}.$ The set $(a^{(1)},\dots,a^{(k)})$ is called \textit{sorted} if $a^{(t)}_i\leq a^{(s)}_i$ for all $t\leq s$ and $a^{(t)}_i\leq a^{(t')}_{i+1}$ for all $t'<t$. The set $(a^{(1)},\dots,a^{(k)})$ is called \textit{quasi-sorted} if it is either sorted or $a^{(t)}_i\leq a^{(s)}_i$ for all $t\leq s$  and if $a^{(t)}_i> a^{(t')}_{i+1}$ for some $t'<t$ then $a^{(t)}_i,a^{(t)}_{i+1},\dots,a^{(t)}_{n_i}\in L(a^{(t')}).$ We call the table $A=(a^{(i)}_j)$ quasi-sorted form for short. 
\end{definition}
\begin{example} Let c=2.\\
(1) 
$
\left (
\begin{array}{cccc}
1 & 4 & 8 & 11\\
3 & 7\\

\end{array}
\right )$
 is sorted because we have a \textit{zigzag} $1<3<4<7<8<11$.\\
(2) 
$
\left (
\begin{array}{cccc}
1 & 4 & 7 & 10\\
3 & 8\\

\end{array}
\right )$
is not sorted because $8>7$. But it is quasi-sorted because we have a \textit{zigzag} $1<3<4<8$ and $8\in L(1,4,7,10)$.
\end{example}

\begin{remark}
\label{rmqs}
(i) The set $(a^{(1)},\dots,a^{(k)})$ is \textit{quasi-sorted} if and only if the pair $(a^{(i)},a^{(j)})$ is quasi-sorted for all $1\leq i<j\leq k.$ \\
(ii) If $(a^{(1)},\dots,a^{(k)})$ is \textit{quasi-sorted} and $n_i=n_j>n_h$ with $i<j<h$ then $a^{(h)}_{n_h}\leq a^{(i)}_{n_i}\leq a^{(j)}_{n_j}.$

\end{remark}
\begin{lemma}
Let $a^{(1)},\dots,a^{(k)}$ be $c$-chains with $a^{(i)}=a^{(i)}_1,\dots,a^{(i)}_{n_i}$ such that $x_{a^{(i)}}>_{\tau}x_{a^{(i+1)}}$. If $n_i-n_j \leq 1 $ for all $i<j$ then $a^{(1)},\dots,a^{(k)}$ is sorted.
\end{lemma}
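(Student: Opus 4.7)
The plan is to reduce to pairs and show that when lengths differ by at most one, case (b) of the quasi-sorted definition collapses onto case (a). First I would apply the analogue of Remark \ref{rmqs}(i) to reduce to showing that each pair $(a^{(t')}, a^{(t)})$ with $t' < t$ is sorted; sortedness of the whole family is equivalent to pairwise sortedness by exactly the same argument that proves Remark \ref{rmqs}(i) for the quasi-sorted case.

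Writing $a = a^{(t')} = a_1, \dots, a_s$ and $b = a^{(t)} = b_1, \dots, b_r$, the hypothesis $x_a >_\tau x_b$ together with $s - r \le 1$ places the pair in the quasi-sorted setup with $s \ge r$. If we are in case (a) of Definition 5.1, then $(a,b)$ is already sorted and there is nothing to do. Otherwise we are in case (b): there exists $k$ with $b_i \le a_{i+1}$ for $i < k$, with $b_k > a_k + 1$, and with $b_k, \dots, b_r \in L(a) = \bigcup_{i=2}^{s} [a_i - c, a_i]$.

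The key step is a pigeonhole argument using the $c$-chain structure. Each interval $[a_i - c, a_i]$ has length $c$, and because $b$ is a $c$-chain we have $b_{j+1} > b_j + c$, so no two of $b_k, \dots, b_r$ can lie in the same interval. Hence there exist strictly increasing indices $i_k < i_{k+1} < \cdots < i_r$ with $b_j \in [a_{i_j} - c, a_{i_j}]$. Since $b_k > a_k + 1 \ge a_k$ and $b_k \le a_{i_k}$, the strict monotonicity of the sequence $(a_i)$ forces $i_k \ge k+1$, and therefore $i_r \ge i_k + (r - k) \ge r + 1$. Combined with $i_r \le s$ this gives $s \ge r + 1$, so together with $s - r \le 1$ we are pinned down to $s = r + 1$ and $i_j = j + 1$ for all $j = k, \dots, r$. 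This yields $b_j \le a_{j+1}$ for $j \ge k$, which combined with the case-(b) hypothesis for $j < k$ proves the sortedness condition $b_j \le a_{j+1}$ for all $j = 1, \dots, r$.

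The main obstacle is the very first step: extracting from the hypothesis $x_{a^{(i)}} >_\tau x_{a^{(i+1)}}$ that each pair is at least quasi-sorted (so that one of the two defining cases applies), since otherwise there is no structure on which the pigeonhole argument can operate. Once pairwise quasi-sortedness is in hand, the rest is a bookkeeping argument that exploits exactly two ingredients: the fact that $L(a)$ decomposes into $s - 1$ intervals of length $c$, and the fact that a $c$-chain must skip any individual such interval after visiting it.
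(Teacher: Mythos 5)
Your proof is correct and takes essentially the same route as the paper: reduce to the pairwise statement, then use the pigeonhole fact that the $c$-chain $b$ meets each of the $s-1$ intervals $[a_i-c,a_i]$ of $L(a)$ at most once, so the bound on the lengths forces the interval indices to line up and sortedness follows (the paper phrases the same count as a contradiction, deriving $n_i>n_j+1$ from an assumed violation $a^{(j)}_t>a^{(i)}_{t+1}$). The obstacle you flag --- that pairwise quasi-sortedness cannot be extracted from the stated hypotheses --- is a defect of the lemma's statement rather than of your argument: the paper's own proof places $a^{(j)}_t,\dots,a^{(j)}_{n_j}$ in $L(a^{(i)})$ ``by definition'', i.e.\ it tacitly assumes the family is quasi-sorted, exactly as you do.
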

\begin{proof} We only prove that the pair $(a^{(i)},a^{(j)})$ is sorted for all $1\leq i<j\leq k.$\\
Assume that $a^{(j)}_t>a^{(i)}_{t+1}$. By definition we have $a^{(j)}_t,a^{(j)}_{t+1},\dots, a^{(j)}_{n_j} \in L(a^{(i)})$.
Set the sequence $\alpha_t,\dots,\alpha_{n_i}\in [n_i]$ such that $a^{(j)}_u\in [a^{(i)}_{\alpha_u}-c,a^{(i)}_{\alpha_u}]$. Since 
$a^{(j)}_t<_ca^{(j)}_{t+1}<_c\cdots <_ca^{(j)}_{n_j}$, we have $\alpha_t< \alpha_{t+1}< \cdots < \alpha_{n_j}$. Because $a^{(j)}_t>a^{(i)}_{t+1}$, we get $t+1<\alpha_t< \alpha_{t+1}< \cdots < \alpha_{n_j}$. Hence $n_i>n_j+1$, a contradiction. 
\end{proof}

Let $a=a_1,\dots,a_s$ and $b=b_1,\dots,b_r$ be two $c$-chains with $\prod_{i\in a}x_i\geq\prod_{i\in b}x_i$, and let $\Omega$ be the set of $c$-chains. We consider the following element of the polynomial ring $\mathbb{K}$[$Y_a$: $a \in \Omega$]:\medskip\\
\noindent
1) Pl\"ucker-type relations:  
$$Y_aY_b-Y_{a\wedge b}Y_{a\vee b}$$ where
$$a\wedge b=(\min(a_1,b_1),\dots,\min(a_r,b_r),a_{r+1},\dots,a_s),$$
$$a\vee b=(\max(a_1,b_1),\dots,\max(a_r,b_r))$$ 
and $a_h<b_h$, $a_k>b_k$ for some $h$ and $k$.\medskip\\
\noindent
2) New-type relations:
$$Y_aY_b-Y_cY_d$$
with $a_i\leq b_i$ for all $i=1,\dots,r$, and there exist  $1\leq h\leq k\leq r$ with 
$$
\begin{array}{l}
b_{h-1}\leq a_h, \\
b_h>a_{h+\alpha}>a_{h+1}, b_{h+1}>a_{h+\alpha+1}, \dots,
b_k>a_{k+\alpha}, \\
b_{k+1}<_c a_{k+\alpha +1},
\end{array}
$$
 where
$$c=(a_1,\dots,a_{h+\alpha-1},b_h,b_{h+1},\dots,b_k,a_{k+\alpha+1},\dots,a_s)$$ and 
$$d=(b_1,\dots,b_{h-1},a_{h+\alpha},a_{h+\alpha+1},\dots,a_{k+\alpha},b_{k+1},\dots,b_r).$$
\medskip
  
By a \textit{ marked polynomial } we mean a polynomial $f\in R-\{0\}$ together with a specified term \textit{in(f)} of $f$. Here $in(f)$ can be any term appearing in $f$. Given a set $\mathcal{F}$ of marked polynomials,  we define the \textit{ reduction relation modulo  $\mathcal{F}$} in the usual sense of Gr\"obner bases. We say that $\mathcal{F}$ is \textit{ marked coherently } if there exists a term order $\prec$ on $R$ such that $in(f)=in(f)$ for all $f$ in $\mathcal{F}$. Clearly, if $\mathcal{F}$ is marked coherently, then the reduction relation $"\rightarrow_{\mathcal{F}}"$ is Noetherian. In  \cite[Theorem 3.12]{St} we have:
\begin{theorem}
\label{thmqs1}
A finite set $\mathcal{F}\subset R$ of marked polynomials is marked coherently if and only if the reduction relation modulo $\mathcal{F}$ is Noetherian, i.e., every sequence of reductions modulo $\mathcal{F}$ terminates.  
\end{theorem}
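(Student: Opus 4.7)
The plan is to prove the two implications separately, with the easy direction first establishing terminology.

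For the ``only if'' direction, assume there exists a term order $\prec$ on $R$ such that $\ini(f)$ coincides with the $\prec$-leading term of $f$ for every $f\in\mathcal{F}$. Then each single reduction step $g\to_{\mathcal{F}}g'$ replaces an occurrence of a monomial $m\cdot \ini(f)$ in $g$ by a $\mathbb{K}$-linear combination of monomials $m\cdot t$ with $t\prec \ini(f)$. Hence the leading monomial of the partial result with respect to $\prec$ strictly $\prec$-decreases (or some strictly smaller position in the sorted support of $g$ strictly decreases). Since $\prec$ is a well-ordering on monomials, any sequence of reductions modulo $\mathcal{F}$ must terminate.

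For the ``if'' (and harder) direction, the strategy is to construct the term order from the reduction data by a Farkas/Gordan-type separation argument. For each $f\in\mathcal{F}$, write $\ini(f)=x^{\alpha_f}$ and let $x^{\beta_{f,1}},\dots,x^{\beta_{f,r_f}}$ denote the remaining monomials appearing in $f$. Consider the finite set of difference vectors $D=\{\alpha_f-\beta_{f,i}\ :\ f\in\mathcal{F},\ 1\leq i\leq r_f\}\subset\mathbb{Z}^n$. By Gordan's theorem (a variant of Farkas' lemma), exactly one of the following alternatives holds: either there exists a vector $w\in\mathbb{R}^n$ with $\langle w,v\rangle>0$ for every $v\in D$, or there exist nonnegative rationals $c_{f,i}$, not all zero, with $\sum_{f,i} c_{f,i}(\alpha_f-\beta_{f,i})=0$. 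In the first case, any term order refining the weight vector $w$ (for instance, by breaking ties lexicographically) makes $\ini(f)$ the $\prec$-leading term of each $f$, so $\mathcal{F}$ is marked coherently.

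The main obstacle, and the heart of the argument, is ruling out the second alternative under the Noetherianity hypothesis. Clearing denominators one may assume the $c_{f,i}$ are nonnegative integers summing to $N>0$. From such a zero-sum relation one constructs an explicit infinite reduction sequence: starting from a sufficiently large monomial $x^\gamma$ chosen so that all translated supports remain in the positive orthant, perform $c_{f,i}$ many reductions of type $(f,i)$ in a round-robin fashion. The identity $\sum c_{f,i}(\alpha_f-\beta_{f,i})=0$ ensures that after one full round the multiset of exponent vectors produced on a single ``track'' returns to a configuration combinatorially equivalent to the previous one, so that the same schedule can be repeated indefinitely. This yields a non-terminating sequence of reductions modulo $\mathcal{F}$, contradicting the assumption. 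A careful bookkeeping of which monomial is reduced at each step (always choosing a nonzero one guaranteed by the relation) is the delicate part; once established, it completes the proof.
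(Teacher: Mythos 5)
First, note that the paper does not prove this statement at all: it is quoted verbatim as \cite[Theorem 3.12]{St} (Sturmfels, \emph{Gr\"obner bases and convex polytopes}), so there is no internal proof to compare against; your attempt has to be judged against the standard argument there. Your easy direction (coherent $\Rightarrow$ Noetherian) is fine in outline, modulo stating the well-founded multiset/induction argument cleanly, since reducing a non-leading term does not decrease the leading monomial.

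The hard direction has two genuine gaps. (1) You invoke plain Gordan's alternative with an \emph{unrestricted} weight vector $w\in\mathbb{R}^n$, and claim that ``any term order refining $w$'' makes the marking coherent. This is false when $w$ has negative entries: for $\mathcal{F}=\{x+1\}$ with marked term $1$, the difference set is $D=\{-e_1\}$ and $w=-e_1$ separates strictly, yet no term order can make $1$ the leading term (term orders are well-orders with $1$ minimal). The correct linear-programming dichotomy must be taken over \emph{nonnegative} weight vectors (Motzkin/Farkas with the sign constraint), and then the dual certificate is a nonnegative combination with $\sum c_{f,i}(\alpha_f-\beta_{f,i})\leq 0$ componentwise, not necessarily $=0$; your subsequent construction is keyed to the $=0$ case only. (2) More seriously, ``perform $c_{f,i}$ many reductions of type $(f,i)$'' is not an available move: a reduction by $f$ replaces the marked monomial by \emph{all} trailing monomials $\beta_{f,1},\dots,\beta_{f,r_f}$ at once, with coefficients, and may cancel terms already present. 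To get a non-terminating sequence you must show that at every scheduled step the monomial you intend to reduce actually occurs with nonzero coefficient in the current polynomial; this survival-of-the-token argument is precisely the delicate heart of Sturmfels' proof, and your sketch only asserts it (``careful bookkeeping''). As written, the contradiction with Noetherianity is therefore not established, so the proof is incomplete at its central step.
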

 In this case, we have a set of marked polynomials 
\begin{eqnarray*}
G =&\{& \underline{Y_aY_b}-Y_{a\wedge b}Y_{a\vee b}, \mbox{ in Pl\"ucker-type relations }\\
& & \underline{Y_aY_b}-Y_cY_d, \mbox{ in New-type relations} \}.
\end{eqnarray*}
\begin{lemma}
\label{qs1}
Let $a=a_1,\dots,a_s$ and $b=b_1,\dots,b_r$ be two $c$-chains with $\prod_{i\in a}x_i\geq\prod_{i\in b}x_i$. The pair $(a,b)$ always reduces modulo $G$ to a quasi-sorted pair of the same size of $(a,b)$.
\end{lemma}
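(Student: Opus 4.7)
The plan is to reduce the claim to three ingredients: (i) each reduction in $G$ sends a pair of $c$-chains with size multiset $\{s,r\}$ to another such pair; (ii) the reduction relation modulo $G$ is Noetherian; and (iii) a pair of $c$-chains which is in normal form modulo $G$ is necessarily quasi-sorted. Taken together these imply that the reduction process starting from $(a,b)$ terminates at a pair of $c$-chains of the prescribed sizes, and that this terminal pair is quasi-sorted.

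Point (i) is a direct numerical check from the defining formulas. For the Pl\"ucker-type relations, the inequalities $a_h<b_h$ and $a_k>b_k$ together with the $c$-chain condition on $a$ and $b$ are exactly what is needed to conclude that $a\wedge b$ and $a\vee b$ are $c$-chains of sizes $s$ and $r$. For the New-type relations one counts $|c|=s$ and $|d|=r$, while the boundary conditions $b_{h-1}\le a_h$ and $b_{k+1}<_c a_{k+\alpha+1}$, combined with the zigzag inequalities, ensure the $c$-chain property of $c$ and $d$.

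For point (ii) I would apply Theorem~\ref{thmqs1}: it suffices to exhibit a term order on $\mathbb{K}[Y_a:a\in\Omega]$ making each underlined monomial the leading term of its relation. The natural choice is the order induced on $\mathbb{K}[Y_a:a\in\Omega]$ by the total order $\sigma$ on $R$ defined just above the statement, via the identification $Y_a\leftrightarrow\prod_{i\in a}x_i$. I would then verify that in each relation the underlined monomial strictly dominates the other term under $\sigma$, exploiting the fact that the right-hand side of every relation has a lexicographically smaller first $c$-chain in its $c$-decomposition.

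The combinatorial heart of the argument is (iii). Assume $(a,b)$ is not quasi-sorted. If there exist $h,k\le r$ with $a_h<b_h$ and $a_k>b_k$, then a Pl\"ucker-type relation applies directly. Otherwise $a_i\le b_i$ for all $i\le r$ and the failure of quasi-sortedness produces the smallest index $h$ at which $b$ overshoots $a$ in the prohibited way. At this index I would choose $\alpha$ maximal with $a_{h+\alpha}<b_h$ and $k$ maximal extending the zigzag $b_i>a_{i+\alpha}$ for $h\le i\le k$; the maximality of these choices, combined with the $c$-chain property of $a$ and $b$, should force all the hypotheses of a New-type relation to be satisfied. The main obstacle I foresee is verifying this simultaneous satisfaction of the numerical constraints---especially $b_h>a_{h+\alpha}>a_{h+1}$ and $b_{k+1}<_c a_{k+\alpha+1}$---and checking that the resulting $(c,d)$ is again a pair of $c$-chains, which is the delicate bookkeeping that ties the whole argument together.
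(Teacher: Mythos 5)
Your three-step plan (each relation preserves the sizes; the reduction relation is Noetherian; irreducible pairs are quasi-sorted) is a legitimate skeleton, but the two nontrivial ingredients are not actually in place. The Noetherianness step (ii) fails as proposed: every binomial in $G$ has the property that its two monomials have the \emph{same} image under $Y_e\mapsto\prod_{i\in e}x_i$ --- for the Pl\"ucker-type relations the multiset of indices in $a\wedge b,\,a\vee b$ equals that in $a,b$, and the same holds for $c,d$ in the New-type relations. Hence any order on $\mathbb{K}[Y]$ pulled back through this identification assigns equal value to both terms and cannot select the underlined monomial; likewise ``the first $c$-chain of the $c$-decomposition'' is the same on both sides, because the $c$-decomposition depends only on the monomial $x_ax_b=x_cx_d$, not on its factorization into $Y$-variables. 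Note also that the paper's logic runs in the opposite direction: termination is proved \emph{directly}, by an explicit reduction algorithm in this lemma and Corollary \ref{qs2}, and only afterwards is Theorem \ref{thmqs1} invoked (in Theorem \ref{quadratic}) to produce a coherent term order. Exhibiting such a term order a priori is essentially as hard as the lemma itself, so it cannot be borrowed for free at this stage.

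The combinatorial step (iii) is exactly the content of the paper's proof, and your sketch of it is both deferred and, as described, likely to break. Choosing $\alpha$ maximal with $a_{h+\alpha}<b_h$ and $k$ maximal with the zigzag $b_i>a_{i+\alpha}$ only yields $b_{k+1}\leq a_{k+\alpha+1}$, which is much weaker than the required boundary condition $b_{k+1}<_c a_{k+\alpha+1}$; when $b_{k+1}$ falls in $[a_{k+\alpha+1}-c,\,a_{k+\alpha+1}]$, i.e.\ in $L(a)$, no New-type relation with that window applies --- this is precisely why the normal forms are only quasi-sorted rather than sorted. The paper instead anchors the window at the first index $k$ with $b_k\notin L(a)$ (so $b_k$ sits in a gap $(a_{t_k},a_{t_k+1}-c)$), and then performs an ordered sequence of New-type moves: first forcing $t_{k-1}=t_k$, then making the $t_i$ consecutive, then exchanging the whole block $b_h,\dots,b_k$ with $a_{t_h-1},\dots,a_{t_{k-1}}$, concluding by induction on $(h,k)$. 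Since you explicitly leave this verification open (``the main obstacle I foresee''), the proposal has a genuine gap at its core; your point (i), that the relations produce pairs of $c$-chains of the same sizes, is correct but is the easy part.
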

\begin{proof}
By using Pl\"ucker-type relations we may always assume that $a_i\leq b_i$ for all $i=1,\dots,r$. 
If the pair $(a,b)$ is not quasi-sorted then we have that there exists a pair $(h,k)$ such that $b_i\leq a_{i+1}$ for all $i=1,\dots,h-1$, $b_h>a_{h+1}$, $b_h,\dots,b_{k-1}\in L(a)$ and $b_k\notin L(a).$ Because $b_h,\dots,b_{k-1}\in L(a)$, $b_k\notin L(a)$ we can assume that $b_i\in [a_{t_i}-c,a_{t_i}]$ for all $i=h,\dots,k-1$ and $b_k\in(a_{t_k},a_{t_k+1}-c)$. Since $b_h<_cb_{h+1}<_c\cdots<_cb_{k-1}<_cb_k$, we have $t_h<t_{h+1}<\cdots<t_{k-1}\leq t_k.$ \\
First, if $t_{k-1}<t_k$ then we can replace $b_k$ by $a_{t_k}$ using New-type relations. After a finite number of steps we reduce to the case $t_{k-1}=t_k$.\\
Second, if $t_{k-1}>t_{k-2}+1$ then we can replace $b_{k-1},b_k$ by $a_{t_k-1},a_{t_k}$ using New-type relations. After a finite number of steps we reduce to the case $t_{k-1}=t_{k-2}+1$. Proceeding in this way, we obtain $t_{k}=t_{k-1}$, $t_i=t_{i-1}+1$ for all $i=2,\dots,k-1.$
We replace $b_h,\dots,b_k$ by $a_{t_h-1},a_{t_h},\dots,a_{t_{k-1}}$ using New-type relations. 
By induction on $(h,k)$ we can reduce the pair $(a,b)$ modulo $G$ to a quasi-sorted pair.  
\end{proof}
\begin{corollary}
\label{qs2}
Let $a^{(1)},\dots,a^{(k)}$ be $c$-chains with $a^{(i)}=a^{(i)}_1,\dots,a^{(i)}_{n_i}$ such that $\prod_{i\in a^{(i)}}x_i\geq\prod_{i\in a^{(i+1)}}x_i$. The table $A=(a^{(i)}_j)$ always reduces modulo $G$ to a quasi-sorted form of the same size of $A$.
\end{corollary}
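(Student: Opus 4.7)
The plan is to bootstrap the pairwise reduction of Lemma \ref{qs1} up to an arbitrary table via the termination provided by Theorem \ref{thmqs1}. By Remark \ref{rmqs}(i), the table $A=(a^{(i)}_j)$ is quasi-sorted precisely when every pair of rows $(a^{(i)},a^{(j)})$ with $i<j$ is quasi-sorted. Thus as long as $A$ is not quasi-sorted, some pair fails; apply Lemma \ref{qs1} to that pair and replace the two rows by the quasi-sorted pair it produces, leaving the remaining rows of $A$ untouched. Iterate until no unsorted pair remains.

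Two observations make this work. First, both the Pl\"ucker-type replacement $(a,b)\mapsto(a\wedge b,a\vee b)$ and the New-type replacement $(a,b)\mapsto(c,d)$ output pairs of $c$-chains whose lengths coincide with the lengths $(s,r)$ of the input pair. Hence at any stage the underlying table has the same shape as $A$, in particular the same size, which establishes the ``same size'' clause of the corollary. Second, termination of the iteration must be ensured, since replacing rows $i$ and $j$ could in principle re-destroy the quasi-sortedness of some other pair $(a^{(i')},a^{(j')})$ and provoke oscillation. This is precisely the role of Theorem \ref{thmqs1}: the set $G$ is marked coherently, so the induced reduction relation $\to_G$ on $\mathbb{K}[Y_a:a\in\Omega]$ is Noetherian. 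Since each of our pairwise moves is one step of $\to_G$ applied to the monomial $Y_{a^{(1)}}\cdots Y_{a^{(k)}}$, the sequence terminates, and the terminal table has every pair quasi-sorted, hence is quasi-sorted by Remark \ref{rmqs}(i).

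The main obstacle is verifying that $G$ is coherently marked, i.e.\ that there exists a term order on $\mathbb{K}[Y_a:a\in\Omega]$ with respect to which the underlined monomial $Y_aY_b$ strictly dominates both $Y_{a\wedge b}Y_{a\vee b}$ (Pl\"ucker case) and $Y_cY_d$ (New-type case) simultaneously for every such relation. A natural candidate is the lexicographic extension to $\mathbb{K}[Y_a:a\in\Omega]$ of the order on $c$-chains given by the initial-monomial order $>_\tau$ from Section 3: the hypothesis $\prod_{i\in a}x_i\geq\prod_{i\in b}x_i$ in the definition of $G$ is designed exactly to orient the relations from the lex-leading pair toward a lex-smaller one. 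Once the compatibility of this order with every $Y_aY_b-Y_{a\wedge b}Y_{a\vee b}$ and every $Y_aY_b-Y_cY_d$ is checked by inspection of the combinatorial definitions of $a\wedge b$, $a\vee b$, $c$, $d$, Theorem \ref{thmqs1} applies and the corollary follows mechanically from Lemma \ref{qs1} and Remark \ref{rmqs}(i).
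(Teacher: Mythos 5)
Your reduction strategy (reduce offending pairs of rows via Lemma \ref{qs1}, using Remark \ref{rmqs}(i), and note that both relation types preserve the row lengths) matches the spirit of the paper, but your termination argument has a genuine gap, and in fact runs the logic of the paper backwards. You invoke Theorem \ref{thmqs1} in the direction ``$G$ is marked coherently $\Rightarrow$ the reduction $\to_G$ is Noetherian'', so everything hinges on exhibiting a term order on $\mathbb{K}[Y_a]$ that selects the underlined terms. But in the paper the existence of such a term order is \emph{deduced from} this corollary: the proof of Theorem \ref{quadratic} says ``since Corollary \ref{qs2} shows that the reduction relation defined by $G_0$ is Noetherian, by Theorem \ref{thmqs1} there exists a term order $\prec$\dots''. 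The intended proof of the corollary is therefore a direct combinatorial termination argument (first use the relations to make the columns of the table increase from top to bottom, then apply Lemma \ref{qs1} entry by entry, checking that each entry becomes ``normal''), with no appeal to a pre-existing coherent marking. If you instead prove the corollary by assuming coherent marking, you must verify that marking independently, otherwise the whole chain Corollary \ref{qs2} $\Rightarrow$ Theorem \ref{quadratic} becomes circular.

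And that verification is exactly the step you leave as ``checked by inspection'', which is the hard point and, as proposed, likely fails. Your candidate order is a lexicographic extension of the order $>_\tau$ on the variables $Y_a$. For a Pl\"ucker-type relation $Y_aY_b-Y_{a\wedge b}Y_{a\vee b}$, the chain $a\wedge b$ consists of the coordinatewise minima, so $x_{a\wedge b}\geq x_a$ in the degree lexicographic order on $\mathbb{K}[x]$, i.e.\ the \emph{sorted} monomial contains a variable $Y_{a\wedge b}$ that is at least as large as the largest variable $Y_a$ of the unsorted monomial. A lexicographic extension of $>_\tau$ therefore tends to make $Y_{a\wedge b}Y_{a\vee b}$ the leading term, which is the opposite of the required marking of $\underline{Y_aY_b}$; at the very least no orientation has been checked, and for sorting-type relations it is well known that no naive lex/revlex order does the job --- this is precisely why Sturmfels' Theorem \ref{thmqs1} is used to \emph{produce} the order from termination rather than the other way around. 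To repair your proof you should drop the appeal to coherent marking and instead give an explicit measure that strictly decreases (or argue, as the paper does, that after the column-sorting step the pairwise reductions of Lemma \ref{qs1} can be scheduled so that each application permanently normalizes an entry), so that termination is established combinatorially.
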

\begin{proof}
By using Pl\"ucker-type relations and New-type relations we can assume that the table $A=(a^{(i)}_j)$ for $i=1,\dots,k$, $j=1,\dots,n_i$ with the columns increase from top to bottom and the rows are $c$-chains . An entry $a^{(i)}_j$ is called a normal entry if it satisfies that either $a^{(i)}_j\leq a^{(i)}_{j+1}$ or $a^{(i)}_j> a^{(i')}_{j+1}$ and $a^{(i)}_h\in L(a^{(i')})$ for all $h=j,\dots,n_i$.\\
Obviously, table $A$ is a quasi-sorted form if and only if $a^{(i)}_j$ is a normal entry for all $i=1,\dots,k$, $j=1,\dots,n_i$. Applying step by step Lemma \ref{qs1} we have that $a^{(i)}_j$ is a normal entry. So the table $(a^{(i)}_j)$ always reduces modulo $G$ to a quasi-sorted form. 
\end{proof}
\begin{lemma}
\label{qs3}
Let $a^{(1)},\dots,a^{(k)}$ be $c$-chains with $a^{(i)}=a^{(i)}_1,\dots,a^{(i)}_{n_i}$ such that $\prod_{i\in a^{(i)}}x_i\geq\prod_{i\in a^{(i+1)}}x_i$. If the table $A=(a^{(i)}_j)$ reduces to quasi-sorted form $B=(b^{(i)}_j)$ of the same size of $A$, then $B$ is unique.
\end{lemma}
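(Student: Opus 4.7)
The plan is to reduce uniqueness of $B$ to a purely combinatorial statement: \emph{a quasi-sorted table is determined by its shape and by its underlying monomial} $\pi(T):=\prod_{i,j} x_{T^{(i)}_j}\in \mathbb{K}[x]$. Both $\pi$ and the multiset of row-lengths $(n_1,\dots,n_k)$ are invariants of every $G$-reduction: a Pl\"ucker-type step only swaps each pair $(a_i,b_i)$ with $(\min(a_i,b_i),\max(a_i,b_i))$, preserving the multiset of entries in the two rows and their lengths; a New-type step, by inspection of the displayed formulas for $c$ and $d$, satisfies $\{a_1,\dots,a_s\}\cup\{b_1,\dots,b_r\}=\{c_1,\dots,c_s\}\cup\{d_1,\dots,d_r\}$ with $|c|=s$ and $|d|=r$. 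Hence any two quasi-sorted reductions $B,B'$ of the same $A$ share shape and $\pi$-image, and the combinatorial assertion will force $B=B'$.

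I would prove the combinatorial assertion by induction on the number of rows $k$; the base $k=1$ is trivial because a single $c$-chain is determined by its multiset of entries. For the inductive step, the defining inequality $b^{(t)}_i\leq b^{(s)}_i$ for all $t\leq s$ (where both sides are defined) says that each column is weakly increasing from top to bottom, so any two quasi-sorted tables with the same $\pi$-image have the same multiset of entries in every column. It therefore suffices to show that within each column the allocation of the entries to individual rows is forced. I would achieve this by first pinning down the longest row $b^{(1)}$ using the zigzag conditions $b^{(t)}_i\leq b^{(t')}_{i+1}$ (for $t'<t$) together with the $c$-chain condition along $b^{(1)}$: column by column, $b^{(1)}_i$ must be the unique entry of the $i$-th column multiset that extends $b^{(1)}_1,\dots,b^{(1)}_{i-1}$ to a $c$-chain while leaving a consistent quasi-sorted residual, and Remark \ref{rmqs}(ii) constrains the last entries of shorter rows so that this choice is well-defined. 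Once $b^{(1)}$ is determined, erasing it yields a quasi-sorted tableau of shape $(n_2,\dots,n_k)$ with a known underlying monomial, and the induction concludes.

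The main obstacle is the case in which some pair $(b^{(t')},b^{(t)})$ is quasi-sorted but not sorted, because then the breaking entries $b^{(t)}_k,\dots,b^{(t)}_{n_t}$ a priori enjoy some freedom to lie anywhere in the union of intervals $L(b^{(t')})=\bigcup_{i\geq 2}[b^{(t')}_i-c,b^{(t')}_i]$. I would dispose of this by showing that once the column multisets and the $c$-chain condition along $b^{(t)}$ are imposed, each breaking entry must in fact lie in a uniquely determined interval $[b^{(t')}_i-c,b^{(t')}_i]$ and, inside that interval, at a single forced position; the argument here mirrors the choice of the indices $\alpha$ and $(h,k)$ in the New-type relation, which were designed precisely to detect and cure such mismatches. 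Reducing non-sorted pairs to the sorted case in this way makes the induction on $k$ go through, giving $B=B'$.
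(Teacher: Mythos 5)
Your opening move is the same as the paper's: both Pl\"ucker-type and New-type reductions preserve the shape and the multiset of entries of the table, so the lemma reduces to the combinatorial claim that a quasi-sorted table is uniquely determined by its shape and its underlying monomial. That reduction is correct. The gap is that this combinatorial claim --- which is the entire content of the paper's proof --- is never actually established in your proposal. Your first key step, that column monotonicity $b^{(t)}_i\leq b^{(s)}_i$ forces two quasi-sorted tables with the same global multiset to have the same multiset of entries in every column, is a non sequitur: monotonicity within each table separately does not tell you how the global multiset distributes among columns when the rows have different lengths and the zigzag condition is allowed to break, and in fact knowing the column contents is essentially as strong as the uniqueness you are trying to prove, so it cannot be assumed at that point. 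Your second key step is precisely the hard case --- the breaking entries $b^{(t)}_k,\dots,b^{(t)}_{n_t}\in L(b^{(t')})$ of a quasi-sorted but non-sorted pair --- and there you only announce that you \emph{would} show each such entry sits in a forced interval and at a forced position; no argument is given, and the analogy with the indices $h,k,\alpha$ of the New-type relation explains how one \emph{reaches} a quasi-sorted form, not why two distinct quasi-sorted forms with the same data cannot both occur. Also, your plan to pin down $b^{(1)}$ \emph{column by column} quietly relies on the unproved column-multiset claim, so the induction on the number of rows does not get off the ground.

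For comparison, the paper proves the uniqueness by a global greedy placement rather than row-by-row: it fixes a priority labeling $PF(i,j)$ of the positions of the given shape (the algorithm following the lemma), lists the entries as $c_1\leq c_2\leq\cdots\leq c_l$, and shows by decreasing induction on $t$ that the position of $c_t$ in any quasi-sorted form of that shape is forced: the largest entry must occupy the highest-labelled position by Remark \ref{rmqs}(ii), and at each subsequent stage $c_t$ must go to the highest-labelled still-free position $(i,j)$ satisfying the compatibility condition $c_t<_c b^{(i)}_{j+1}$ (when $j<n_i$). An argument of this kind --- or a genuine proof of your two asserted steps --- is what is missing; as written, the proposal records the correct target statement but not a proof of it.
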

To prove this lemma we need to label the entries of the table of the same size as $A$ by the following algorithm:
\begin{algorithm}
Set $n_{k+1}:=0$ and $PF(0,1):=0$. \\
For $t=k$ Down To 1 Do\\
If $n_{t+1}=n_t$ Then $t:=t+1$ Else\\
For $i=n_{t+1}+1$ To $n_t$ Do\\
For $j=1$ To $t$ Do $PF(i,j):=PF(t,n_t)+j+(i-1)t$
\end{algorithm}
\begin{example}
If $n_1=n_2=7,n_3=n_4=4,n_5=n_6=2$ we have the labeling:
$$
PF=
\begin{array}{ccccccc}
1 & 7 & 13 & 17 & 21 & 23 & 25\\
2 & 8 & 14 & 18 & 22 & 24 & 26\\
3 & 9 & 15 & 19 \\
4 & 10& 16 & 20 \\
5 & 11\\
6 & 12
\end{array}
$$
\end{example}
\begin{proof}[Proof of \ref{qs3}]
We have that the above function $PF(i,j)$ accepts the values $1,\dots,l$ with $l=\sum_1^kn_i$.
We order the multiset $A=(a^{(i)}_j)$ (the same for $B$) by the multiset $\{c_1,c_2,\dots,c_l\}$, namely $c_1\leq c_2 \leq \cdots \leq c_l$. We have the unique property of the quasi-sorted form $B$ given by the place of $c_t$ in the form $B$.\\
We prove by decreasing induction on $t$. \\
If $t=l$ then there exists a place $(i,j)$ such that $PF(i,j)=l$. Using the second part of Remark \ref{rmqs} we have $b^{(i)}_j=c_l$ and we replace the $PF$-function by setting $PF(i,j)=0$.\\
Assume that we defined the place $c_h=b^{(\alpha)}_{\beta}$ and $PF(\alpha,\beta)=0$ for all $h>t$. We restart with $(i,j)$, where $i$ is the row index and $j$ is the column index in $B$, such that $PF(i,j)$ is maximal. By the definition of quasi-sorted form, if $j<n_i$ and $c_t<_cb^{(i)}_{j+1}$ then $b^{(i)}_j=c_t$ otherwise we continue with the next largest value $PF(i,j)$. Hence, the place of $c_t$ in $B$ is defined.  
\end{proof} 

Let $I$ be an ideal of a ring $R$. The Rees algebra $\Rees(I)$ of $I$ is the $R$-graded algebra $\bigoplus_{k=0}^\infty \ I^{k}T^k$, where $T$ is an indeterminate over $R$.  In other words, $\Rees(I)$   can be identified with the $R$-subalgebra of  $R[T]$ generated by $IT$. We may also consider the symbolic Rees algebra $\Rees^s(I)$, that is,
$\Rees^s(I)=\bigoplus_{k=0}^\infty I^{(k)}T^k$. If $R$  is a polynomial ring and
$\tau$ a monomial order,  then the initial algebra of $\Rees(I)$ is  $\ini_\tau(\Rees(I))=\bigoplus_{k=0}^\infty \ini_\tau( I^k)T^k$. Similarly the initial algebra of $\ini(\Rees^s(I))$ of $\Rees^s(I)$ is $\ini_\tau(\Rees^s(I))=\bigoplus_{k=0}^\infty \ini_\tau( I^{(k)})T^k$.\\ 
\begin{proposition} 
\label{ree1}
One has: 
$$
\begin{array}{l}
\Rees^s(I_t)=\mathbb{K}[x][I_tT,I_{t+1}T^2,\dots,I_mT^{m-t+1}]\\ \\ 
\ini(\Rees^s(I_t))=\mathbb{K}[x][\ini(I_t)T,\ini(I_{t+1})T^2,\dots,\ini(I_m)T^{m-t+1}].
\end{array}
$$
In particular, $\Rees^s(I_t)$ and  $\ini(\Rees^s(I_t))$ are Noetherian, Cohen-Macaulay  normal domains.
\end{proposition}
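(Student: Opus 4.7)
The plan is to establish the two displayed equalities first, and then obtain the structural properties (Noetherian, Cohen--Macaulay, normal domain) by transferring them from the initial algebra, whose semigroup structure is explicit. For the first equality, the inclusion $\supseteq$ is obvious, and $\subseteq$ is precisely the content of Theorem~\ref{symbpow}: each graded piece $I_t^{(s)}T^s$ decomposes as a sum of terms $I_t^{a_t}I_{t+1}^{a_{t+1}}\cdots I_m^{a_m}T^s$ with $a_t+2a_{t+1}+\cdots+(m-t+1)a_m=s$, and each such product is built from the generators $I_{t+k}T^{k+1}$ of the algebra on the right. For the second equality, I would apply Corollary~\ref{prim9} to every summand to obtain $\ini(I_t^{a_t}\cdots I_m^{a_m})=\ini(I_t)^{a_t}\cdots\ini(I_m)^{a_m}$, and then use that the initial ideal of a monomial-generated sum is the sum of the initial ideals, concluding that $\ini(I_t^{(s)})=\sum\ini(I_t)^{a_t}\cdots\ini(I_m)^{a_m}$, which is exactly the degree-$s$ component of the claimed initial algebra.

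Noetherianity is immediate, since both algebras are finitely generated over the Noetherian ring $\mathbb{K}[x]$, and $\Rees^s(I_t)\subset\mathbb{K}[x][T]$ is a domain. The crucial point is to show that $\ini(\Rees^s(I_t))$ is a Cohen--Macaulay normal domain. By Theorem~\ref{lmsym}, $\ini(I_t^{(s)})=J_t^{(s)}$ is the monomial ideal $\langle x^a:\gamma_{t,c}(x^a)\geq s\rangle$, so $\ini(\Rees^s(I_t))$ is the semigroup $\mathbb{K}$-algebra attached to
\[
S=\{(a,s)\in\mathbb{N}^n\times\mathbb{N}:\gamma_{t,c}(x^a)\geq s\},
\]
in particular a domain. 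To prove $S$ is saturated in its group envelope, it suffices to verify the identity $\gamma_{t,c}(\delta^k)=k\gamma_{t,c}(\delta)$ for every monomial $\delta$ and every $k\geq 1$. The inequality $\geq$ is the corollary after Lemma~\ref{cc}; for $\leq$ I would argue that if $\delta=\delta_1\cdots\delta_p$ is the $c$-decomposition of $\delta$ with shape $s_1\geq\cdots\geq s_p$, then $\delta^k$ admits a $c$-decomposition whose shape consists of $k$ copies of each $s_i$, so that $\gamma_{t,c}(\delta^k)=k\sum_i\max(s_i-t+1,0)=k\gamma_{t,c}(\delta)$. Once $\mathbb{K}[S]$ is known to be a normal semigroup ring, Hochster's theorem delivers Cohen--Macaulayness.

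The transfer from the initial algebra to $\Rees^s(I_t)$ itself uses a standard flat-degeneration argument: if the initial algebra of a graded $\mathbb{K}$-subalgebra of a polynomial ring is a Cohen--Macaulay normal domain with respect to some term order, then so is the algebra itself (cf.\ Sturmfels' \emph{Gr\"obner Bases and Convex Polytopes}, Chapter~11, and the Conca--Herzog--Valla deformation principle). Combined with the second displayed equality this produces the required conclusion for $\Rees^s(I_t)$.

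The main obstacle I anticipate is the saturation step, i.e.\ the identity $\gamma_{t,c}(\delta^k)=k\gamma_{t,c}(\delta)$: the combinatorial assertion behind it is that the canonical $c$-decomposition commutes with taking powers of a monomial, which demands a careful inspection of how maximal $c$-chains interact when the support is replicated $k$ times. Everything else---passing to the graded pieces, invoking Theorem~\ref{symbpow} and Corollary~\ref{prim9}, and applying Hochster together with the standard SAGBI transfer---is essentially bookkeeping.
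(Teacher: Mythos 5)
Your plan is correct and is essentially the argument the paper intends: the paper's own ``proof'' is a one-line deferral to Conca's Sagbi-deformation argument from \cite{C}, which is exactly what you reconstruct — the symbolic-power formula (Theorem \ref{symbpow}) gives the graded pieces, the initial algebra is identified with the affine semigroup ring of $S=\{(a,s):\gamma_{t,c}(x^a)\geq s\}$, normality of $S$ plus Hochster gives Cohen--Macaulayness, and the Conca--Herzog--Valla transfer \cite{CHV} carries everything back to $\Rees^s(I_t)$. Two small repairs to the justifications: first, ``the initial ideal of a sum is the sum of the initial ideals'' is not a valid general principle, so you should not derive $\ini(I_t^{(s)})=\sum\ini(I_t)^{a_t}\cdots\ini(I_m)^{a_m}$ from Corollary \ref{prim9} plus that claim; the correct source is the Gr\"obner basis $G_{t,s}$ of $I_t^{(s)}$ (Corollary \ref{coro1}, i.e.\ Theorem \ref{thm5} applied to $I_2$), which directly shows $\ini(I_t^{(s)})=J_t^{(s)}$ — the identification you state, but it needs Theorem \ref{thm5}/Corollary \ref{coro1} rather than Theorem \ref{lmsym} alone, which only treats the monomial side. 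Second, the saturation step you flag as the main obstacle does go through: since $\operatorname{supp}(\delta^k)=\operatorname{supp}(\delta)$ and every exponent of $\delta^k$ on the support of the leading chain $\delta_1$ is at least $k$, the greedy construction strips off $\delta_1$ exactly $k$ times and then recurses on $(\delta/\delta_1)^k$, so the shape of $\delta^k$ is $k$ copies of the shape of $\delta$ and $\gamma_{t,c}(\delta^k)=k\,\gamma_{t,c}(\delta)$; combined with superadditivity (which is automatic here because the initial algebra is closed under multiplication, or can be deduced from $P_j^sP_j^{s'}\subseteq P_j^{s+s'}$), this yields the saturation of $S$ you need.
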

For the proof of Proposition \ref{ree1}, one uses exactly the same arguments given by Conca in \cite{C}.

Let $I_1,\dots,I_s$ be ideals of a ring $R$. The multi-homogeneous Rees algebra $\Rees(I_1,\dots,I_s)$ of $I_1,\dots,I_s$ is the $R$-graded algebra $$\Rees(I_1,\dots,I_s)=\bigoplus_{\alpha_1,\dots,\alpha_s} \ (I_1T_1)^{\alpha_1}\cdots (I_sT_s)^{\alpha_s}, $$ where $T_1,\dots,T_s$ are  indeterminates over $R$.

Let $I_{t_1},\dots,I_{t_k}$ be determinantal ideals of extended Hankel  matrices. We have $\ini (\Rees(I_{t_1},\dots,I_{t_k}))=\Rees(J_{t_1},\dots,J_{t_k})$. By Corollary \ref{prim10} we have the following result:
\begin{proposition}  The multi-homogeneous Rees algebra $\Rees(I_{t_1},\dots,I_{t_k})$ is normal and Cohen-Macaulay.   
\end{proposition}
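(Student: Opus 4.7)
The plan is to exhibit both properties on the initial algebra first and then transfer them to the original one, exploiting the key identity $\ini(\Rees(I_{t_1},\dots,I_{t_k})) = \Rees(J_{t_1},\dots,J_{t_k})$ recorded just before the statement. The initial algebra is a monomial subalgebra of $\mathbb{K}[x][T_1,\dots,T_k]$, hence a toric (affine semigroup) ring, where normality and Cohen-Macaulayness can be attacked combinatorially.

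First I would reduce to the monomial case. Since $\Rees(I_{t_1},\dots,I_{t_k})$ admits $\Rees(J_{t_1},\dots,J_{t_k})$ as the initial algebra with respect to the term order $\tau$ induced by $<$, a standard deformation argument (of the same sort invoked by Conca for Proposition \ref{ree1} in \cite{C}) shows that both normality and the Cohen-Macaulay property descend from $\Rees(J_{t_1},\dots,J_{t_k})$ to $\Rees(I_{t_1},\dots,I_{t_k})$: the two algebras share the same multi-graded Hilbert function, so the depths coincide once the Krull dimensions do, and the normality of the semigroup fibre passes to the generic fibre via integral-closure lifting along the flat family.

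Next I would verify normality of $\Rees(J_{t_1},\dots,J_{t_k})$. For a multi-Rees algebra of monomial ideals, normality is equivalent to the statement that every product $J_{t_1}^{a_1}\cdots J_{t_k}^{a_k}$ is integrally closed. Given non-negative exponents $a_1,\dots,a_k$, apply Corollary \ref{prim10} to the sequence $\tau$ obtained by listing $t_i$ with multiplicity $a_i$ (and reordering in weakly decreasing order). This expresses the product as an intersection
\[
J_{t_1}^{a_1}\cdots J_{t_k}^{a_k} \;=\; \bigcap_{j}\bigcap_{z\in A_{j-1}} P_z^{\gamma_j(\tau)}.
\]
Each $P_z$ is generated by a subset of the variables, so every power $P_z^s$ is a monomial ideal whose exponent set is the set of monomials with support-degree $\geq s$ on that subset; such ideals are integrally closed. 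An intersection of integrally closed monomial ideals is integrally closed, giving the desired conclusion. Once $\Rees(J_{t_1},\dots,J_{t_k})$ is a normal affine semigroup ring, Hochster's theorem immediately yields that it is Cohen-Macaulay.

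The main obstacle, in my view, is making the lifting from the initial algebra to the original algebra fully rigorous in the multi-graded Rees setting: one must ensure that the Sagbi-type degeneration provided by $\tau$ behaves well enough that normality (and not only Cohen-Macaulayness, for which the depth inequality $\operatorname{depth} A \geq \operatorname{depth} \ini_\tau A$ suffices) passes from the special fibre to the general one. The rest of the argument is essentially a combinatorial translation of Corollary \ref{prim10} followed by Hochster's theorem, so the non-trivial mathematical input is really packaged inside the results already established in the previous sections and in \cite{C}.
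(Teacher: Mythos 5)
Your proposal is correct and follows essentially the same route as the paper: the identity $\ini(\Rees(I_{t_1},\dots,I_{t_k}))=\Rees(J_{t_1},\dots,J_{t_k})$, normality of the monomial multi-Rees algebra via Corollary \ref{prim10} (each product of the $J_{t_i}$'s is an intersection of powers of variable-generated primes, hence integrally closed), Cohen-Macaulayness by Hochster, and transfer to $\Rees(I_{t_1},\dots,I_{t_k})$ by the Sagbi deformation results of \cite{CHV} as used by Conca in \cite{C}. Your worry about the lifting step is exactly what \cite{CHV} settles (note only that the correct mechanism is the flat deformation with $\depth A\geq\depth\ini_\tau A$ and normality lifting, not the Hilbert-function remark), so there is no gap.
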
 
In \cite{C} and \cite{CHV}, the authors studied the presentation of the Rees algebras for $s=1$. In this part we would like to treat the more general case:
\begin{theorem}
\label{ree}
The multi-homogeneous Rees algebra $\Rees(I_{t_1},\dots,I_{t_k})$ is defined by 
 a  Gr\"obner basis of quadrics.
\end{theorem}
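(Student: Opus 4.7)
The plan is to present $\Rees(I_{t_1},\dots,I_{t_k})$ as a quotient $\pi \colon S \to \Rees(I_{t_1},\dots,I_{t_k})$, where $S$ is the polynomial ring $\mathbb{K}[x_i,\, Y_{a,j}]$ with $Y_{a,j}$ indexed by $c$-chains $a$ of length $t_j$, $\pi(x_i)=x_i$, and $\pi(Y_{a,j})=M(a)T_j$. The goal is to show that $\ker\pi$ admits a quadratic Gr\"obner basis, and the natural candidates are the multigraded analogues of the Pl\"ucker-type and New-type binomials that form the set $G$ introduced above, with leading terms as underlined in the definition of $G$. A suitable term order on $S$ can be built by extending the total order $\sigma$ on monomials of $R$ (defined just before Theorem \ref{linquo}) to the $Y_{a,j}$'s, refined by the labels $j$ to ensure marked coherence across multidegrees.

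The first step is to verify that $G$ is a marked coherent set in this extended setting. Combining Lemma \ref{qs1} with Corollary \ref{qs2} shows that every monomial in the $Y_{a,j}$'s reduces modulo $G$ to a monomial corresponding to a quasi-sorted table, and Lemma \ref{qs3} shows this quasi-sorted normal form is unique; by Theorem \ref{thmqs1} this Noetherianity is exactly the marked-coherence condition, so the indicated marking is realized by an actual term order. The next step is to check $G \subset \ker\pi$. Pl\"ucker-type binomials $Y_aY_b - Y_{a\wedge b}Y_{a\vee b}$ come from the classical sorting identities for minors, restricted to $c$-chain indices. New-type binomials $Y_aY_b - Y_cY_d$ must be verified by hand, using the identity $[\alpha+1\mid\beta] = [\alpha\mid\beta+c]$ for extended Hankel matrices (the relation listed just before Lemma \ref{lm3}) to shift row indices into column indices; iterated applications of this identity allow one to rewrite $M(a)M(b)$ as $M(c)M(d)$ precisely under the conditions on $h,k,\alpha$ in the definition of New-type relations.

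Finally, to promote $G$ from a contained set to a Gr\"obner basis, one must show that the quasi-sorted normal forms are $\mathbb{K}[x]$-linearly independent in $\Rees(I_{t_1},\dots,I_{t_k})$. Fixing a multidegree $(\alpha_1,\dots,\alpha_k)$, the quasi-sorted monomials in the $Y_{a,j}$'s of that multidegree map to products of maximal minors of shape $\tau$ obtained by listing $t_j$ with multiplicity $\alpha_j$ in weakly decreasing order; by Corollary \ref{prim9} products of minors of shape $\tau$ form a Gr\"obner basis of $I_{t_1}^{\alpha_1}\cdots I_{t_k}^{\alpha_k}$, and by the bijection $\Phi$ the subfamily arising from maximal minors has distinct initial monomials. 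This yields the desired $\mathbb{K}$-linear independence in each multigraded component and completes the proof.

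I expect the main obstacle to be the verification that every New-type binomial lies in $\ker\pi$. The combinatorial data $(h,k,\alpha)$ appearing in the definition are tuned so that a carefully chosen sequence of extended Hankel row/column shifts carries $M(a)M(b)$ exactly onto $M(c)M(d)$, but bookkeeping through the simultaneous reshuffling of several minors is delicate; one must argue that no sign or residual lower-order minor appears as a byproduct, which will require a close reading of Lemma \ref{lm3} and an induction on the "defect" $k-h$ governing how far the pair $(a,b)$ is from sorted. With that identity in hand, the remaining parts of the argument are essentially Buchberger-style combinatorics, since normal-form uniqueness has already been isolated in Lemma \ref{qs3}.
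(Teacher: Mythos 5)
There is a genuine gap, and it is exactly at the point you flag as ``the main obstacle,'' except that the obstacle is worse than you expect: the binomials in $G$ are simply not elements of $\ker\pi$ when $Y_{a,j}$ is sent to the minor $M(a)T_j$. The Pl\"ucker-type and New-type binomials are relations among the \emph{monomials} $x_a=\ini(M(a))$, i.e.\ they lie in the toric ideal $I_\Psi$ presenting $\Rees(J_{t_1},\dots,J_{t_k})$, but products of maximal minors do not satisfy them exactly; the straightening of a non-(quasi-)sorted product produces lower-order terms. Already in the simplest Hankel case $c=1$, $n=5$, one computes
\[
M(1,5)\,M(2,4)-M(1,4)\,M(2,5)=-\,M(1,3)\,M(3,5)\neq 0,
\]
so even the Pl\"ucker-type binomial (which you take for granted as a ``classical sorting identity'') fails to be in the kernel, and no bookkeeping with the shift identity $[\alpha+1\mid\beta]=[\alpha\mid\beta+c]$ will make such residual terms vanish for the New-type relations either. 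Consequently your candidate Gr\"obner basis does not consist of elements of the defining ideal, and the final linear-independence argument cannot repair this: reduction modulo a set not contained in $\ker\pi$ says nothing about $\ker\pi$.

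The paper's proof avoids working with the minors directly. It first proves the quadratic Gr\"obner basis statement for the \emph{initial algebra}: $\Rees(J_{t_1},\dots,J_{t_k})$ is a toric ring, its defining ideal really is generated by the binomials, and Theorem \ref{quadratic} (marked coherence via Theorem \ref{thmqs1}, Noetherianity from Corollary \ref{qs2}, uniqueness of the quasi-sorted normal form from Lemma \ref{qs3}) together with the extra relations $x_tY_{(p,a)}-x_{a_h}Y_{(p,b)}$ gives the quadratic Gr\"obner basis there. It then transfers the result to $\Rees(I_{t_1},\dots,I_{t_k})$ by \cite[Corollary 2.2]{CHV}: since $\ini(I_{t_1}^{\alpha_1}\cdots I_{t_k}^{\alpha_k})=J_{t_1}^{\alpha_1}\cdots J_{t_k}^{\alpha_k}$ (Corollary \ref{prim9}), the maximal minors form a Sagbi basis, $\Rees(J_{t_1},\dots,J_{t_k})$ is the initial algebra of $\Rees(I_{t_1},\dots,I_{t_k})$, and the quadratic Gr\"obner basis of the toric ideal lifts to a Gr\"obner basis of the true defining ideal consisting of quadrics whose leading binomials are the elements of $G$ but which carry exactly the lower-order correction terms your argument assumes away. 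If you want to salvage your write-up, the pieces you assembled (quasi-sorted reduction, uniqueness, the role of Corollary \ref{prim9}) are the right ones, but they must be routed through the initial-algebra/Sagbi deformation step rather than asserted as exact identities among minors.
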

By virtue of \cite[Corollary 2.2]{CHV}, it suffices to show that the initial algebra of $\Rees(I_{t_1},\dots,I_{t_k})$ is defined by a Gr\"obner basis of quadrics. In this case the initial algebra is $\Rees\big(J_{t_1},\dots,J_{t_k}\big)$.

Let $A=\{(i,a_1,\dots,a_{t_i}): i=1,\dots k, a_1<_ca_2<_c \cdots <_c a_{t_i}\}$ and take a family  of indeterminates  $Y=(Y_a)_{a\in A}$.\\
Consider the presentation of
$\Rees\big(J_{t_1},\dots,J_{t_k}\big)$ 
$$ \Phi:\mathbb{K}[x][Y]\to \Rees\big(J_{t_1},\dots,J_{t_k}\big)$$ 
 is defined by sending $x_i$ to $x_i$ and $Y_a$ to $x_aT_j=x_{a_1}x_{a_2}\cdots x_{a_{t_j}}T_j$, where $a=(j,a_1,\dots,a_{t_j})$.\\
In particular, the presentation of the special fiber of $\Rees(J_{t_1},\dots,J_{t_k})$
$$ \Psi:\mathbb{K}[Y]\to \Rees(J_{t_1},\dots,J_{t_k})/m_R\Rees(J_{t_1},\dots,J_{t_k})$$ defined by sending $Y_a$ to $x_aT_j=x_{a_1}x_{a_2}\cdots x_{a_{t_j}}T_j$.\\
The defining ideal of the special fiber of the multi-homogeneous Rees algebra $\Rees(J_{t_1},\dots,J_{t_k})$ is
\begin{eqnarray*}
I_{\Psi}=&\big<&Y_{(i_1,a^{(1)})}\cdots Y_{(i_k,a^{(k)})}-Y_{(j_1,b^{(1)})}\cdots Y_{(j_k,b^{(k)})}:\; i_p=j_p \forall p,\\
& & \multiset(a^{(1)}\cup \cdots \cup a^{(k)})=\multiset(b^{(1)}\cup \cdots \cup b^{(k)})\big>,
\end{eqnarray*}
where $a^{(i)},b^{(j)}$ are $c$-chains.\\
We will show that $I_{\Psi}$ is defined by  a  Gr\"obner basis of quadrics.

In the polynomial ring $\mathbb{K}[Y_{(i,a)}: a \mbox{ is c-chain length }t_i]$, a monomial\\ $Y_{(i_1,a^{(1)})}\cdots Y_{(i_k,a^{(k)})}$ is called quasi-sorted if $(a^{(1)},\dots,a^{(k)})$ is quasi-sorted. We have the following theorem:
\begin{theorem}
\label{quadratic}
There exists a term order $\prec$ on $\mathbb{K}[Y]$ such that the quasi-sorted monomials are precisely the $\prec$-standard monomials modulo $I_{\Psi}$. The initial ideal $\ini(I_{\Psi})$ is generated by square-free quadratic monomials.\\
In particular, the special fiber $\Rees(J_{t_1},\dots,J_{t_k})/m_R\Rees(J_{t_1},\dots,J_{t_k})$ is defined by  a  Gr\"obner basis of quadrics. 
\end{theorem}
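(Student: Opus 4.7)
The plan is to show that the set $G$ of Plücker-type and New-type marked binomials is a Gröbner basis of $I_\Psi$ with respect to a suitable term order $\prec$ on $\mathbb{K}[Y]$; since every such binomial has the shape $\underline{Y_aY_b}-Y_cY_d$ with $Y_aY_b$ a square-free quadratic monomial, this immediately yields both assertions of the theorem. The first routine check is that $G\subset I_\Psi$: in every Plücker-type and every New-type relation the multiset of $x$-indices and the sequence of row-sizes $t_i$ are preserved, so both monomials have the same image under $\Psi$.

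To produce the term order, I would invoke Theorem \ref{thmqs1}: it suffices that the reduction relation modulo $G$ be Noetherian. This is exactly Corollary \ref{qs2}, which guarantees that every table $A=(a^{(i)}_j)$ reduces modulo $G$ to a quasi-sorted form of the same size in finitely many steps. Consequently there exists a term order $\prec$ for which $\ini_\prec(g)$ coincides with the underlined monomial for every $g\in G$, and the $\prec$-standard monomials modulo the ideal $J=\langle \ini_\prec(g):g\in G\rangle$ are exactly the quasi-sorted monomials of $\mathbb{K}[Y]$.

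Finally, I would prove that the quasi-sorted monomials form a $\mathbb{K}$-basis of $\mathbb{K}[Y]/I_\Psi$; combined with the previous step, a Hilbert-series comparison then forces $\ini_\prec(I_\Psi)=J$, which shows that $G$ is a Gröbner basis. Spanning follows at once from Corollary \ref{qs2} together with $G\subset I_\Psi$: every monomial of $\mathbb{K}[Y]$ is congruent modulo $I_\Psi$ to a quasi-sorted one. For linear independence, two monomials $Y_{(i_1,a^{(1)})}\cdots Y_{(i_k,a^{(k)})}$ and $Y_{(i_1,b^{(1)})}\cdots Y_{(i_k,b^{(k)})}$ are congruent modulo $I_\Psi$ if and only if they share the same sequence of row-sizes and the same multiset of $x$-indices; the reconstruction procedure in the proof of Lemma \ref{qs3} recovers a quasi-sorted table uniquely from those two pieces of data, so two quasi-sorted monomials in the same class must coincide. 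The main technical obstacle is precisely this uniqueness, packaged in the PF-labeling of Lemma \ref{qs3} and depending combinatorially on the delicate $L(a)$-interactions between different $c$-chains; once it is secured, the Gröbner-basis conclusion follows by the standard dimension-counting argument.
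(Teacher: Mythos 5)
Your proposal is correct and follows essentially the same route as the paper: both obtain the term order from the Noetherianity of the reduction (Corollary \ref{qs2}) via Theorem \ref{thmqs1}, and both use the uniqueness of the quasi-sorted form (Lemma \ref{qs3}), together with the fact that congruence modulo the toric ideal $I_{\Psi}$ is detected by the multidegree data, to identify the $\prec$-standard monomials with the quasi-sorted ones. The only difference is cosmetic: the paper marks the binomials $\underline{Y_aY_b}-Y_cY_d$ with $(c,d)$ the full quasi-sorted reduction of $(a,b)$ and finishes by a direct contradiction argument, whereas you mark the Pl\"ucker-type and New-type relations themselves and finish by the equivalent standard-monomial/Hilbert-function count.
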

\begin{proof}
Let $G_0$ denote the set of marked binomials 
$$\{ \underline{Y_{(s,a)}Y_{(r,b)}}-Y_{(s,c)}Y_{(r,d)}:\mbox{(c,d) is the quasi-sorted pair reduction from (a,b)}\}.$$
Obviously, these relations do indeed lie in $I_{\Psi}$. Since Corollary \ref{qs2} shows that the reduction relation defined by $G_0$ is Noetherian, by Theorem \ref{thmqs1}, this implies that there exists a term order $\prec$ on $\mathbb{K}[Y]$ which selects the underlined term as the initial term for each binomial in $G_0$.\\
Consider the initial ideal $\ini(I_{\Psi})$. Every monomial which is not quasi-sorted lies in this ideal. Assume that some quasi-sorted monomial $m_1$ lies in $\ini(I_{\Psi})$. There exists a non-zero binomial $m_1-m_2\in I_{\Psi}$ such that $m_2$ does not lie in $\ini(I_{\Psi})$. So $m_2$ is a quasi-sorted monomial. This implies $m_1,m_2$ are quasi-sorted monomials which lie in the same residue class modulo $I_{\Psi}$. By Lemma \ref{qs3} we have $m_1=m_2$. This is a contradiction. Hence the monomials in $\ini(I_{\Psi})$ are precisely the non-quasi-sorted monomials. We conclude that the set $G_0$ is a  Gr\"obner basis of $I_{\Psi}$ with respect to $\prec$.
\end{proof}
Moreover by setting $Y_{(0,t)}=x_t$, we have that the Rees algebra $\Rees(J_{t_1},\dots,J_{t_k})$ is defined by  a  Gr\"obner basis of quadrics of following forms:\\
(i)  $Y_{(s,a)}Y_{(r,b)}-Y_{(s,c)}Y_{(r,d)}$:  (c,d) is the quasi-sorted pair reduction of (a,b).\\
(ii)  $x_tY_{(p,a)}-x_{a_h}Y_{(p,b)}$ : with  $a_{h-1}<_c t<a_h$ for some $h$, $1\leq h \leq p$, $b$ is the
sequence $(a_1,\dots,a_{h-1},t,a_{h+1},\dots,a_p)$ and $a_0=-\infty$. Hence we have proved Theorem \ref{ree}.

Moreover, we can deduce that the multi-homogeneous Rees algebra\\ $\Rees(I_1,\dots,I_s)$ is Koszul; see \cite[Corollary 3.14]{BC2}. By using the result in \cite{B} for the multigraded case, we can give another proof of the result in Section 3.

\vspace{1cm}

\end{document}